\numberwithin{equation}{section}
\theoremstyle{plain}
\newtheorem{theorem}{Theorem}[section]
\newtheorem{lemma}[theorem]{Lemma}
\newtheorem{proposition}[theorem]{Proposition}
\newtheorem{corollary}[theorem]{Corollary}
\theoremstyle{definition}
\newtheorem{definition}[theorem]{Definition}
\newtheorem{remark}[theorem]{Remark}
\newtheorem{example}[theorem]{Example}
\setlist[itemize]{noitemsep}
\title{\textbf{Canonical completion and duality for cylindric ortholattices and cylindric Boolean algebras}}
\author{Joseph McDonald\footnote{University of Alberta, Department of Philosophy, Edmonton, T6G 2E7, Canada.}\footnote{Email: jsmcdon1@ualberta.ca\\This research was supported under the CGS-D SSHRC grant no.~767-2022-1514.}}
\date{}
\begin{document}

\maketitle
\begin{abstract}
 In this note, we investigate the algebraic and topological representation theory of cylindric ortholattices and cylindric Boolean algebras. The first contribution demonstrates that cylindric ortholattices are closed under canonical completions. By equipping a spectral topology to the dual space associated with the canonical completion, we then establish a dual equivalence between the category of cylindric ortholattices and a certain subcategory of the category of spectral spaces. This work builds on the completion and duality results obtained by Harding, McDonald, and Peinado in the setting of monadic ortholattices combined with the duality results obtained by McDonald and Yamamoto in the setting of general ortholattices. By working with the duality theory for Boolean algebras established by Bezhanishvili and Holliday, we then obtain completion and duality results for cylindric Boolean algebras. A key aspect of our duality results is that they are constructive in the sense that they obtain in Zermelo-Fraenkel set theory independently of the Axiom of Choice.   
       
\par
\vspace{.2cm}
\noindent \textbf{Keywords:} Cylindric ortholattice; Cylindric Boolean algebra; Canonical completion; Vietoris hyperspace; Duality theory; Axiom of Choice.     
\end{abstract}

\section{Introduction}

Cylindric Boolean algebras were introduced by Henkin and Tarski \cite{henkin} and by Henkin, Monk, and Tarski \cite{tarski1,tarski2} as algebraic models of the classical predicate calculus with identity. Closely related to the cylindric Boolean algebras are the polyadic and monadic Boolean algebras studied by Halmos \cite{halmos, halmos1}, which form algebraic models of the classical single-variable predicate calculus and the full predicate calculus without identity, respectively. For any cardinal $\kappa$, a $\kappa$\emph{-dimensional cylindric Boolean algebra} is a Boolean algebra $B$ equipped with a family $(\exists_{\alpha})_{\alpha\in\kappa}$ of pairwise commuting quantifiers (i.e., closure operators whose closed elements form Boolean sub-algebras of $B$) and a family $(d_{\alpha,\beta})_{\alpha,\beta\in\kappa}$ of constants satisfying certain conditions. The standard construction of a $\kappa$-dimensional cylindric Boolean algebra involves starting with a set $X$ and then considering the concrete (powerset) Boolean algebra of the function space $X^{\kappa}$. For each $\alpha\in\kappa$, its \emph{cylindrification} $\exists_{\alpha}$ of a subset $Y\subseteq X^{\kappa}$ is given by the collection of choice-functions $\sigma\in X^{\kappa}$ that agree with a choice-function in $Y$ except (possibly) at the coordinate $\alpha$. The \emph{diagonal} $d_{\alpha,\beta}$ is the collection of all choice-functions $\sigma\in X^{\kappa}$ satisfying the equation $\sigma(\alpha)=\sigma(\beta)$. Recently, Harding \cite{harding} introduced cylindric ortholattices and studied them for their connections to subfactors within the setting of von Neumann algebras. A key aspect distinguishing cylindric ortholattices from cylindric Boolean algebras is that the former do not in general satisfy the distributive identities.

In this note, we first prove that cylindric ortholattices are closed under canonical completions. This extends the canonical completion results obtained by Harding, McDonald, and Peinado \cite{harding3} within the setting of monadic ortholattices. The completion of a cylindric ortholattice $A$ arises by forming an associated dual space $X_A$ that is a cylindric orthoframe. This is a set equipped with an orthogonality relation, an indexed family of commuting reflexive, transitive binary relations, as well as an indexed family of distinguished subsets satisfying certain conditions. By working with the proper filters of $A$, the corresponding completion is obtained by embedding $A$ into the complete cylindric ortholattice of biorthogonally closed subsets of $X_A$. Based on the obtained canonical completion result, our main result contributes to the recent research program of producing (spatial) topological dualities for lattice-based algebras independently of the Axiom of Choice. Research along these lines was initiated within the setting of Boolean algebras by Bezhanishvili and Holliday \cite{bezhanishvili}, which can be viewed as arising in part via Stone's representation of bounded distributive lattices via the compact open subsets of a spectral space \cite{stone2}, the Vietoris hyperspace construction \cite{vietoris}, and Tarski's observation \cite{tarski1, tarski2} that the regular open subsets of any topological space form a Boolean algebra. These methods were subsequently applied in the setting of ortholattices by McDonald and Yamamoto \cite{mcdonald}, De Vries algebras by Massas \cite{mas}, modal algebras by Bezhanishvili, Dmitrieva, \emph{et.~al.} \cite{dmitrieva}, as well as Heyting algebras, implicative lattices, and monotone bounded lattice expansions by Hartonas \cite{hartonas1, hartonas2}. 

The primary aim of this note is to apply these methods to the setting of cylindric ortholattices and cylindric Boolean algebras. We first introduce certain spectral spaces, which we call \emph{cylindric upper Vietoris orthospaces}. These are relational extensions of the upper Vietoris orthospaces (UVO-spaces) introduced by McDonald and Yamamoto \cite{mcdonald} combined with the monadic orthospaces studied by Harding, McDonald, and Peinado \cite{harding3}. It is shown that every cylindric ortholattice $A$ is isomorphic to the algebra $\mathcal{A}_0(\mathcal{S}_0(A))$ of compact open biorthogonally closed subsets of the spectrum $\mathcal{S}_0(A)$ of proper filters of $A$ equipped with the spectral topology. Unlike the representation and duality results for ortholattices in \cite{goldblatt1, bimbo} and for monadic ortholattices in \cite{harding3}, ours avoids the use of Alexander's Subbase Theorem, or any other choice-dependent principle, in proving that $\mathcal{S}_0(A)$ forms a compact topological space. We then prove that every cylindric UVO-space $X$ is relationally homeomorphic to the spectrum $\mathcal{S}_0(\mathcal{A}_0(X))$ of proper filters of the algebra $\mathcal{A}_0(X)$ of compact open biorthogonally closed subsets of $X$. Finally, with the introduction of suitable spectral frame morphisms, which we call \emph{cylindric UVO-maps}, we show that the category $\mathcal{COL}$ of cylindric ortholattices is dually equivalent to the category $\mathcal{CUVO}$ of cylindric UVO-spaces. We then introduce relational extensions of the UV-spaces introduced by Bezhanishvili and Holliday \cite{bezhanishvili}, which we call \emph{cylindric UV-spaces}. These are certain spectral spaces that come equipped with a family of commuting equivalence relations as well as a family of distinguished subsets satisfying similar conditions to cylindric UVO-spaces. We show that every cylindric Boolean algebra $A$ is isomorphic to the algebra $\mathcal{G}_0(\mathcal{F}_0(A))$ of compact open regular open subsets of the spectrum $\mathcal{F}_0(A)$ of proper filters of $A$. We then show that every cylindric UV-space $X$ is relationally homeomorphic to the spectrum $\mathcal{F}_0(\mathcal{G}_0(X))$ of proper filters of the algebra $\mathcal{G}_0(X)$ of compact open regular open subsets of $X$. With the introduction of suitable spectral frame morphisms, which we call \emph{cylindric UV-maps}, we show that the category $\mathcal{CBA}$ of cylindric Boolean algebras is dually equivalent to the category $\mathcal{CUV}$ of cylindric UV-spaces. In doing so, we provide a topological construction of the canonical completion of a cylindric Boolean algebra. Finally, we conclude by discussing how the duality obtained for cylindric Boolean algebras is related to the duality obtained for cylindric ortholattices.

\section{Cylindric ortholattices and orthoframes} 
In this section, we describe some basic properties of cylindric ortholattices and cylindric orthoframes. For more details, consult \cite{harding}. 
 \begin{definition}\label{ol}
An \emph{ortholattice} is an bounded lattice $\langle A;\wedge,\vee,0,1\rangle$ that comes equipped with an additional operation $^{\perp}\colon A\to A$, known as an \emph{orthogonal complementation}, satisfying the following conditions: 
\par
\vspace{-.1cm}
\begin{multicols}{2}
    \begin{enumerate}
        \item $a\wedge a^{\perp}=0$;
        \item $a\vee a^{\perp}=1$;
        \item $a\leq b\Rightarrow b^{\perp}\leq a^{\perp}$; 
        \item $a^{\perp\perp}=a$. 
    \end{enumerate}
\end{multicols}
   \end{definition}

   It is routine to verify that every ortholattice satisfies De Morgan's laws. Moreover, note that unlike Boolean algebras, ortholattices are not in general distributive. In fact, an ortholattice $A$ is a Boolean algebra iff $A$ is distributive.  
    \begin{definition}\label{mol}
A \emph{monadic ortholattice} is an ortholattice $\langle A;\wedge,\vee,^{\perp},0,1\rangle$ that comes equipped with an additional operation $\exists\colon A\to A$, known as a \emph{quantifier}, satisfying the following conditions: 
\par
\vspace{-.1cm}
\begin{multicols}{3}
        \begin{enumerate}
            \item $\exists(a\vee b)=\exists a\vee\exists b$;
               \item $\exists 0=0$; 
               \item $\exists\exists a=\exists a$;
               \item $a\leq \exists a$;
               \item $\exists(\exists a)^{\perp}=(\exists a)^{\perp}$. 
    \end{enumerate}
\end{multicols}
\end{definition}

Any quantifier on an ortholattice $A$ may be viewed as a closure operator whose closed elements $\{a\in A:a=\exists a\}$ form a sub-ortholattice of $A$.

\begin{example}\label{12elements}
 Figure 1 depicts the 12-element monadic ortholattice $\text{mOL}_{12}$. Note that for every $a\in\text{mOL}_{12}$, $\exists a$ is the smallest $\bullet$-element above $a$ so that each $\bullet$-element in $\text{mOL}_{12}$ is a closed element under $\exists$. Note that $\text{mOL}_{12}$ in particular forms a quantum monadic algebra (see \cite[Example 5.7]{harding}).   
\end{example}

 \par
\begin{figure}[htbp]
\begin{center}
    \begin{tikzpicture}
  \node (a) at (0,1.5) {$\bullet$};
  \node (d) at (0,-1.5)  {$\bullet$};
  \node (e) at (-1,-.45)  {$\bullet$};
  \node(f) at (-1,.45) {$\bullet$};
    \node (g) at (1,-.45) {$*$};
  \node (h) at (1,.45) {$*$};
  \node (i) at (2.5,.45) {$*$};
  \node (j) at (2.5,-.45) {$*$};
    \node (k) at (-2.5,.45) {$\bullet$};
  \node (l) at (-2.5,-.45) {$\bullet$};
    \node (m) at (0,.45) {$\bullet$};
  \node (n) at (0,-.45) {$\bullet$};
  \draw (e) -- (d) (g) -- (d) (f) -- (a) (h) -- (a) (a) (i) -- (a) (k) -- (a) (m) -- (a) (j) -- (d) (l) -- (d) (n) -- (d) (j) -- (h) (n) -- (h) (n) -- (f) (m) -- (e) (m) -- (g) (g) -- (i) (e) -- (k) (n) -- (k) (i) -- (n) (j) -- (m) (l) -- (f) (l) -- (m);
  \draw[preaction={draw=white, -,line width=6pt}];
  \end{tikzpicture}\caption{12-element monadic ortholattice}
  \end{center}
\end{figure}
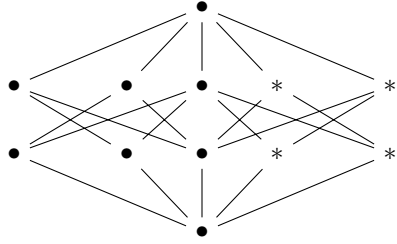

\begin{definition}\label{coll}
 An $I$\emph{-dimensional cylindric ortholattice} is an ortholattice $A$ equipped with a family of unary operators $(\exists_i)_{i\in I}$ and a family of constants $(\delta_{i,k})_{i,k\in I}$ such that for all $i,k,l\in I$, the following conditions are satisfied:
    \begin{enumerate}
        \item $\exists_{i}\colon A\to A$ is a quantifier;
        \item $\exists_{i}\exists_{k}a=\exists_{k}\exists_{i}a$;
        \item $\delta_{i,k}=\delta_{k,i}$ and $\delta_{i,i}=1$;
        \item $i,l\not=k\Rightarrow\exists_{k}(\delta_{i,k}\wedge \delta_{k,l})=\delta_{i,l}$.
    \end{enumerate}
\end{definition}

The reduct $\langle A;\wedge,\vee,^{\perp},0,1,(\exists_{i})_{i\in I}\rangle$ is known as the \emph{I-dimensional diagonal-free cylindric ortholattice reduct} of $A$. 

It is worth pointing out that cylindric Boolean algebras form a strengthening of cylindric ortholattices in two ways. First, the underlying lattice reduct is distributive. Second, there is an axiom asserting that for all $i,k\in I$ such that $i\not=k$, we have $\exists_i(d_{i,k}\wedge a)\wedge\exists_{k}(d_{i,k}\wedge a^{\perp})=0$. This induces an operation of substitution defined by $s^i_k(a)=\exists_i(d_{i,k}\wedge a)$ that is an endomorphism of the Boolean algebra reduct. For details on why this axiom is not included in the definition of cylindric ortholattices, consult \cite[Remark 5.15]{harding}.

Let $X$ be a set and $R$ be a binary relation on $X$. Then for any $U\subseteq X$, we will write $R[U]$ to denote the relational image of $U$ under $R$, i.e., \[R[U]=\{y\in X:xRy\hspace{.1cm}\text{for some}\hspace{.1cm}x\in U\}\] so that for any $x\in X$, we have $R[\{x\}]=\{y\in X:xRy\}$. 
\begin{definition}\label{of}
     An \textit{orthoframe} is pair $\langle X,\perp\rangle$ such that $X$ is a set and $\perp\subseteq X^2$ is an orthogonality relation, i.e., $\perp$ is irreflexive and symmetric. Moreover: 
\begin{enumerate}
    \item for $U\subseteq X$, let $U^{\perp}=\{x\in X: x\perp U\}=\{x\in X:x\perp y\hspace{.2cm}\text{for all}\hspace{.2cm}y\in U\}$  
    \item $U\subseteq X$ is \emph{biorthogonally closed} iff $U=U^{\perp\perp}$. 
\end{enumerate}
For any orthoframe $X$, by $\mathcal{B}(X)$ we denote the collection of biorthogonally closed subsets of $X$. 
\end{definition}
  The following relational structures were introduced in \cite{harding}.  
\begin{definition}\label{mof}
    A \emph{monadic orthoframe} is a triple $\langle X;\perp,R\rangle$ such that: 
    \begin{enumerate}
        \item $\langle X;\perp\rangle$ is an orthoframe;
        \item $R$ is a binary relation on $X$ that is reflexive and transitive; 
        \item $R[R[\{x\}]^{\perp}]\subseteq R[\{x\}]^{\perp}$ for all $x\in X$.  
    \end{enumerate}
\end{definition}
Condition 3 in the above definition can be viewed as the requirement that $R[\{x\}]^{\perp}$ be closed under $R$. 
Note that in any monadic orthoframe $\langle X;\perp,R\rangle$:  \[R[R[\{x\}]^{\perp}]=R[\{x\}]^{\perp}\] for all $x\in X$ by the reflexivity of $R$. 

\begin{definition}\label{col}
An \emph{I-dimensional cylindric orthoframe} is a relational structure $\langle X;\perp,(R_{i})_{i\in I},(\Delta_{i,k})_{i,k\in I}\rangle$ such that for all $i,k,l\in I$:   
    \begin{enumerate}
        \item $\langle X;\perp,R_{i}\rangle$ is a monadic orthoframe; 
        \item $R_i$ commutes with $R_k$;
        \item $\Delta_{i,k}\subseteq X$ with $\Delta_{i,k}=\Delta_{k,i}=\Delta^{\perp\perp}_{i,k}$ and $\Delta_{i,i}=X$;
        \item if $i,l\not=k$, then $R_k[\Delta_{i,k}\cap \Delta_{k,l}]=\Delta_{i,l}$. 
    \end{enumerate}
\end{definition}
The reduct $\langle X;\perp,(R_{i})_{i\in I}\rangle$ is known as the \emph{I-dimensional diagonal-free cylindric orthoframe reduct} of $X$ whenever conditions 1 and 2 are satisfied.

Throughout the remainder of this work, if $X$ is a monadic orthoframe, define: \[U\sqcup V:=(U\cup V)^{\perp\perp},\hspace{.2cm} \exists_RU:=R[U]^{\perp\perp}\] where $R[U]^{\perp\perp}=(R[U])^{\perp\perp}$ for all $U\in\mathcal{B}(X)$. 

\begin{lemma}[\protect{\cite[Proposition 7.31]{harding}}]\label{set representation}
    If $X$ is an $I$-dimensional cylindric orthoframe, then the algebra $\langle\mathcal{B}(X);\cap,\sqcup,^{\perp},\emptyset,X,(\exists_{R_{i}})_{i\in I},(\Delta_{i,k})_{i,k\in I}\rangle$ is a complete $I$-dimensional cylindric ortholattice.  
\end{lemma}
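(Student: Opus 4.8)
The plan is to verify that $\langle\mathcal{B}(X);\cap,\sqcup,^{\perp},\emptyset,X,(\exists_{R_i})_{i\in I},(\Delta_{i,k})_{i,k\in I}\rangle$ satisfies each clause of Definition~\ref{coll}, leaning on the known facts about $\mathcal{B}(X)$ for orthoframes and monadic orthoframes. Recall first the standard background: for any orthoframe, $\mathcal{B}(X)$ ordered by inclusion is a complete ortholattice in which meet is intersection, the orthocomplement of $U$ is $U^\perp$, the join of a family is the biorthogonal closure of its union (so in particular $U\sqcup V=(U\cup V)^{\perp\perp}$), the bottom is $\emptyset$ and the top is $X=\emptyset^\perp$; this is the ``set representation'' for plain ortholattices. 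So the lattice-theoretic and orthocomplementation part of the statement is already in hand, and what remains is to check the cylindric structure.

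The key steps, in order, are as follows. \textbf{Step 1 (each $\exists_{R_i}$ is a quantifier).} Fix $i$ and write $R=R_i$, $\exists_R U=R[U]^{\perp\perp}$. One shows $\exists_R$ satisfies the five clauses of Definition~\ref{mol}: additivity $\exists_R(U\sqcup V)=\exists_R U\sqcup\exists_R V$ follows because $R[-]$ commutes with unions and biorthogonal closure is a closure operator (so $\exists_R$ is a normal operator on $\mathcal{B}(X)$); $\exists_R\emptyset=\emptyset$ since $R[\emptyset]=\emptyset$; $U\subseteq\exists_R U$ by reflexivity of $R$ together with $V\subseteq V^{\perp\perp}$; idempotence $\exists_R\exists_R U=\exists_R U$ and the clause $\exists_R(\exists_R U)^\perp=(\exists_R U)^\perp$ both come from condition~3 of Definition~\ref{mof}, i.e.\ from $R[\{x\}]^\perp$ being $R$-closed, exactly as in the monadic case of \cite{harding3}. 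In fact this step is precisely the monadic-orthoframe set representation applied to each $\langle X;\perp,R_i\rangle$, so it can be cited rather than reproved. \textbf{Step 2 (commutation).} From $R_i\circ R_k=R_k\circ R_i$ one gets $R_i[R_k[U]]=R_k[R_i[U]]$ for every $U$; then, since biorthogonal closure is a closure operator and $\exists_{R_j}$ is monotone with $\exists_{R_j}V=\exists_{R_j}(V^{\perp\perp})$, a short computation gives $\exists_{R_i}\exists_{R_k}U=R_i[R_k[U]^{\perp\perp}]^{\perp\perp}=R_i[R_k[U]]^{\perp\perp}=R_k[R_i[U]]^{\perp\perp}=\exists_{R_k}\exists_{R_i}U$, using that applying $\exists_{R_i}$ to $V$ and to $V^{\perp\perp}$ agree. \textbf{Step 3 (diagonals).} Condition~3 of Definition~\ref{col} says $\Delta_{i,k}\in\mathcal{B}(X)$, that $\Delta_{i,k}=\Delta_{k,i}$ and $\Delta_{i,i}=X$, which are exactly clause~3 of Definition~\ref{coll} for $\mathcal{B}(X)$ (with $1=X$). \textbf{Step 4 (diagonal–cylindrification axiom).} For $i,l\neq k$ we must show $\exists_{R_k}(\Delta_{i,k}\cap\Delta_{k,l})=\Delta_{i,l}$; since $\Delta_{i,k},\Delta_{k,l}\in\mathcal{B}(X)$ their meet in $\mathcal{B}(X)$ is the intersection $\Delta_{i,k}\cap\Delta_{k,l}$, and by definition $\exists_{R_k}(\Delta_{i,k}\cap\Delta_{k,l})=R_k[\Delta_{i,k}\cap\Delta_{k,l}]^{\perp\perp}$, which by condition~4 of Definition~\ref{col} equals $\Delta_{i,l}^{\perp\perp}=\Delta_{i,l}$, the last equality because $\Delta_{i,l}$ is biorthogonally closed.

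Finally, completeness of $\mathcal{B}(X)$ as a lattice is part of the plain-ortholattice set representation, and no clause above requires anything beyond what is recorded in Definitions~\ref{of}--\ref{col}; assembling Steps 1--4 yields that $\langle\mathcal{B}(X);\cap,\sqcup,^{\perp},\emptyset,X,(\exists_{R_i})_{i\in I},(\Delta_{i,k})_{i,k\in I}\rangle$ is a complete $I$-dimensional cylindric ortholattice. I expect the only mildly delicate point to be Step~2: one must be a little careful that the two nested applications of biorthogonal closure can be collapsed, i.e.\ that $R_i[R_k[U]^{\perp\perp}]^{\perp\perp}=R_i[R_k[U]]^{\perp\perp}$, which holds because $\exists_{R_i}$ is a closure-type operator agreeing on $V$ and $V^{\perp\perp}$; everything else is a direct transcription of the monadic case of \cite{harding3} together with the defining conditions of a cylindric orthoframe.
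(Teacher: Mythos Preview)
Your proposal is correct and follows essentially the same approach as the paper: both reduce the ortholattice structure to Birkhoff's result and the quantifier property of each $\exists_{R_i}$ to the monadic case (Harding's Lemma~7.25), with the cylindric axioms then read off from Definition~\ref{col}. In fact you go further than the paper's sketch, which simply cites \cite[Proposition~7.31]{harding} without spelling out commutation or the diagonal conditions; your Step~2 identity $R_i[W^{\perp\perp}]^{\perp\perp}=R_i[W]^{\perp\perp}$ is indeed valid (it follows because $R_i[W]^{\perp}=\bigcap_{w\in W}R_i[\{w\}]^{\perp}$ is $R_i$-closed by the monadic orthoframe axiom, hence so is its orthocomplement $R_i[W]^{\perp\perp}$), so the collapsing of nested biorthogonal closures that you flag as delicate is justified.
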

\begin{proof}
    The proof relies on the well-known result of Birkhoff \cite[p. 123]{birkhoff2} that the algebra $\langle\mathcal{B}(X);\cap,\sqcup,^{\perp},\emptyset,X\rangle$ forms a complete ortholattice whenever $X$ is an orthoframe and the result of Harding \cite[Lemma 7.25]{harding} that $\exists_R\colon\mathcal{B}(X)\to\mathcal{B}(X)$ is a quantifier on $\mathcal{B}(X)$ so that the algebra $\langle\mathcal{B}(X);\cap,\sqcup,^{\perp},\emptyset,X,\exists_R\rangle$ is a complete monadic ortholattice whenever $X$ is a monadic orthoframe. 
\end{proof}

\section{Canonical completion}
In this section, using the canonical completion results obtained by Harding, McDonald, and Peinado \cite{harding3} within the setting of monadic ortholattices, we demonstrate that the variety of $I$-dimensional cylindric ortholattices is closed under canonical completions. 

Recall that for a bounded lattice $A$ and a complete lattice $C$, an embedding $e\colon A\to C$ is \emph{dense} if every element in $C$ is both a meet of joins and a join of meets of elements in the image of $A$ under $e$. Moreover, $e$ is \emph{compact} if for all subsets $S,T\subseteq A$, there exists finite subsets $S'\subseteq S$ and $T'\subseteq T$ such that: 
\[\bigwedge e[S]\leq\bigvee e[T]\Longrightarrow\bigwedge e[S']\leq\bigvee e[T'].\]

The \emph{canonical completion} of $A$ is then defined to be a pair $\langle e;C\rangle$ such that $C$ is a complete lattice and $e\colon A\to C$ is an embedding which is dense and compact. It is easy to see that every bounded lattice $A$ has up to isomorphism a unique canonical completion.
\begin{remark}
    Notice that when $B$ is a Boolean algebra, its canonical completion is determined by the embedding of $B$ into the complete powerset Boolean algebra $\wp(X_B)$ where $X_B$ is the Stone space of ultrafilters of $B$.    
\end{remark}

Recall that for a bounded lattice $A$: 
\begin{enumerate}
    \item a non-empty subset $x\subseteq A$ is a \emph{filter} whenever:
    \begin{enumerate}
        \item $a\in x$ and $a\leq b$ implies $b\in x$;
        \item $a\in x$ and $b\in x$ implies $a\wedge b\in x$; 
    \end{enumerate}
    \item a filter $x\subseteq A$ is \emph{proper} whenever $x\not=A$. Note that this is equivalent to the requirement that $0\not\in x$ since $0\leq a$ for all $a\in A$; 
    \item  a proper filter $x\subseteq A$ is \emph{prime} whenever $a\vee b\in x$ implies $a\in x$ or $b\in x$;
    \item a proper filter $x\subseteq A$ is \emph{completely prime} whenever $\bigvee_{i\in I}a_i\in x$ implies $a_i\in x$ for some $i\in I$.
\end{enumerate}

\begin{definition}\label{spectrum}
    Let $A$ be an $I$-dimensional cylindric ortholattice. We define the relational structure $X_A=\langle\mathfrak{F}(A);\perp,(R_i)_{i\in I},(\Delta_{i,k})_{i,k\in I}\rangle$ as follows: 
    \begin{enumerate}
        \item $\mathfrak{F}(A)$ is the collection of all non-empty proper filters of $A$;
        \item $\perp\subseteq\mathfrak{F}(A)^2$ is defined by $x\perp y$ iff there exists $a\in x$ such that $a^{\perp}\in y$; 
        \item $R_{i}\subseteq\mathfrak{F}(A)^2$ is defined by $xR_{i}y$ iff $\exists_{i}[x]\subseteq y$ with $\exists_{i}[x]=\{\exists_{i} a:a\in x\}$; 
        \item $\Delta_{i,k}\subseteq\mathfrak{F}(A)$ is defined by $\Delta_{i,k}=\phi(d_{i,k})=\{x\in\mathfrak{F}(A):\delta_{i,k}\in x\}$.
        \end{enumerate}
        \end{definition}
We will often refer to $X_A$ as the \emph{Goldblatt frame} of $A$. 

        \begin{lemma}\label{lemma 3.3}
            $X_A=\langle\mathfrak{F}(A);\perp,(R_i)_{i\in I},(\Delta_{i,k})_{i,k\in I}\rangle$ is an $I$-dimensional cylindric orthoframe whenever $A$ is an $I$-dimensional cylindric ortholattice.  
        \end{lemma}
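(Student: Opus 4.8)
The plan is to verify, one at a time, the four conditions defining an $I$-dimensional cylindric orthoframe in Definition~\ref{col}.

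For condition~1, fix $i\in I$. Since $\exists_i$ is a quantifier (clause~1 of Definition~\ref{coll}), the reduct $\langle A;\wedge,\vee,^{\perp},0,1,\exists_i\rangle$ is a monadic ortholattice in the sense of Definition~\ref{mol}, and $\langle\mathfrak{F}(A);\perp,R_i\rangle$ is precisely its Goldblatt frame as treated by Harding, McDonald, and Peinado; so it is a monadic orthoframe by the corresponding result in \cite{harding3}. (A self-contained check is also short: $\perp$ is symmetric by $a^{\perp\perp}=a$ and irreflexive since $a\wedge a^{\perp}=0$ cannot lie in a proper filter; $R_i$ is reflexive and transitive from $a\le\exists_i a$, $\exists_i\exists_i a=\exists_i a$, and upward closure of filters; and the closure condition $R_i\big[R_i[\{x\}]^{\perp}\big]\subseteq R_i[\{x\}]^{\perp}$ is exactly where the quantifier axiom $\exists_i(\exists_i a)^{\perp}=(\exists_i a)^{\perp}$ is needed.)

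For condition~2, by symmetry it suffices to show: if $x R_i y$ and $y R_k z$, then there is a proper filter $y'$ with $x R_k y'$ and $y' R_i z$. I would take $y'=\{a\in A:\exists_k c\le a\text{ for some }c\in x\}$. This is upward closed by construction; it is closed under binary meets because $\exists_k$ is monotone and $x$ is closed under meets, so $\exists_k c_1\le a_1$ and $\exists_k c_2\le a_2$ with $c_1,c_2\in x$ give $\exists_k(c_1\wedge c_2)\le\exists_k c_1\wedge\exists_k c_2\le a_1\wedge a_2$ with $c_1\wedge c_2\in x$; and it is proper since $\exists_k c\le 0$ forces $c\le\exists_k c=0$, contradicting $0\notin x$. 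As $\exists_k[x]\subseteq y'$ we get $x R_k y'$. Finally, for $a\in y'$ choose $c\in x$ with $\exists_k c\le a$; then $\exists_i a\ge\exists_i\exists_k c=\exists_k\exists_i c$, and since $\exists_i c\in\exists_i[x]\subseteq y$ we have $\exists_k\exists_i c\in\exists_k[y]\subseteq z$, whence $\exists_i a\in z$; thus $\exists_i[y']\subseteq z$, i.e.\ $y' R_i z$. The commutation axiom $\exists_i\exists_k a=\exists_k\exists_i a$ (clause~2 of Definition~\ref{coll}) is used precisely at the step $\exists_i\exists_k c=\exists_k\exists_i c$. I expect this to be the main obstacle: unlike the other conditions it requires building the interpolant $y'$ explicitly and checking that it is genuinely a proper filter, the delicate point being that $\exists_k$ need not preserve binary meets, so closure of $y'$ under meets really does rest on monotonicity of $\exists_k$ together with $x$ being downward directed.

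For conditions~3 and~4, write $\phi(a)=\{x\in\mathfrak{F}(A):a\in x\}$, so $\Delta_{i,k}=\phi(\delta_{i,k})$. Then $\delta_{i,k}=\delta_{k,i}$ and $\delta_{i,i}=1$ (clause~3 of Definition~\ref{coll}) give $\Delta_{i,k}=\Delta_{k,i}$ and $\Delta_{i,i}=\phi(1)=\mathfrak{F}(A)$ at once. That $\Delta_{i,k}=\Delta_{i,k}^{\perp\perp}$ is the general fact (already in \cite{mcdonald}) that each $\phi(a)$ is biorthogonally closed: $\phi(a)\subseteq\phi(a)^{\perp\perp}$ always, and if $a\notin y$ then, assuming $a\ne 1$ (the case $a=1$ being immediate), the principal filter generated by $a^{\perp}$ lies in $\phi(a)^{\perp}$ yet is not $\perp$-related to $y$, because $b\in y$ with $b\le a$ would force $a\in y$; hence $y\notin\phi(a)^{\perp\perp}$. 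For condition~4, fix $i,l\ne k$. Since $\phi$ preserves finite meets, $\Delta_{i,k}\cap\Delta_{k,l}=\phi(\delta_{i,k}\wedge\delta_{k,l})$, so it suffices to establish $R_k[\phi(a)]=\phi(\exists_k a)$ for all $a\in A$: the inclusion $\subseteq$ holds because $x R_k y$ and $a\in x$ give $\exists_k a\in\exists_k[x]\subseteq y$; for $\supseteq$, if $\exists_k a\in y$ (so $a\ne 0$, the case $a=0$ giving $\phi(\exists_k a)=\emptyset$) then the principal filter generated by $a$ is proper, contains $a$, and has $R_k$-image inside $y$ since $b\ge a$ implies $\exists_k b\ge\exists_k a\in y$. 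Applying this with $a=\delta_{i,k}\wedge\delta_{k,l}$ and invoking clause~4 of Definition~\ref{coll} yields $R_k[\Delta_{i,k}\cap\Delta_{k,l}]=\phi\big(\exists_k(\delta_{i,k}\wedge\delta_{k,l})\big)=\phi(\delta_{i,l})=\Delta_{i,l}$. Conditions~1, 3, and~4 are thus comparatively routine, with condition~1 reducing to the monadic orthoframe result of \cite{harding3}.
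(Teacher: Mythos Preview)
Your proof is correct and follows essentially the same approach as the paper: condition~1 is reduced to the monadic result of \cite{harding3}, condition~2 is handled by constructing the interpolant as the filter generated by $\exists_k[x]$ (your $y'$ is exactly this filter, and you are more careful than the paper in verifying it is proper and closed under meets), and conditions~3 and~4 follow from the properties of $\phi$. The only minor difference is that for condition~4 you prove the general identity $R_k[\phi(a)]=\phi(\exists_k a)$ and then specialize, whereas the paper argues the two inclusions directly for $a=\delta_{i,k}\wedge\delta_{k,l}$; the underlying principal-filter argument is identical.
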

        \begin{proof}
The result that $\perp$ is irreflexive and symmetric follows from Goldblatt \cite[Lemma 2]{goldblatt1}. The result that $R_{i}$ is reflexive, transitive, and satisfies the inclusion $R_i[R_i[\{x\}]^{\perp}]\subseteq R_i[\{x\}]^{\perp}$ for each proper filter $x\in\mathfrak{F}(A)$ follows by Harding, McDonald, and Peinado \cite[Proposition 3.5]{harding3}. 

Hence we first verify that the relations $R_{i}$ and $R_{k}$ commute, i.e., \[R_{i}\circ R_{k}=R_{k}\circ R_{i}\hspace{.4cm}\text{for all $i,k\in I$}\] where the relational composition $R_i\circ R_k$ is defined in the usual way by:
\[R_i\circ R_k=\{\langle x,z\rangle\in\mathfrak{F}(A)\times\mathfrak{F}(A):xR_iy\hspace{.1cm}\text{and}\hspace{.1cm}yR_kz\hspace{.1cm}\text{for some}\hspace{.1cm}y\in\mathfrak{F}(A)\}.\]

Assume $xR_{i}y$ and $yR_{k}z$ so that $\exists_{i}[x]\subseteq y$ and $\exists_{k}[y]\subseteq z$ for some $y\in\mathfrak{F}(A)$. This implies that $\exists_k[\exists_i[x]]\subseteq z$ where:
\begin{align*}
    \exists_k[\exists_i[x]]&=\{\exists_kb:b\in\exists_i[x]\}=\{\exists_kb:b=\exists_ic\hspace{.1cm}\text{for some}\hspace{.1cm}c\in x\}
    \\&=\{\exists_k\exists_ic:c\in x\}=\{\exists_i\exists_kc:c\in x\}
\end{align*}
It suffices to find some $w\in\mathfrak{F}(A)$ such that $xR_{k}w$ and $wR_{i}z$. Hence set $w$ to be the proper filter generated by $\exists_k[x]$, i.e.,
\[w=\bigcap\{v\in\mathfrak{F}(A):\exists_k[x]\subseteq v\}.\] It is easy to verify that $w$ is indeed proper since $x$ is proper, and also $\exists_k[x]\subseteq w$ and $\exists_i[w]\subseteq z$ so that $xR_kw$ and $wR_iz$. The right-to-left inclusion follows by a symmetric argument.

Verifying that $\Delta_{i,k}=\Delta_{k,i}$ is straightforward as can be seen below: 
\begin{align*}    \Delta_{i,k}&=\phi(\delta_{i,k})=\{x\in\mathfrak{F}(A):\delta_{i,k}\in x\}\tag{def. of $\Delta_{i,k}$ and $\phi$}
\\&=\{x\in\mathfrak{F}(A):\delta_{k,i}\in x\}\tag{Definition \ref{coll}(3)}
\\&=\phi(\delta_{k,i})=\Delta_{k,i}.\tag{def. of $\Delta_{k,i}$ and $\phi$}
\end{align*}

The result that $\Delta_{i,k}=\Delta_{i,k}^{\perp\perp}$ is immediate by the definition of $\Delta_{i,k}$ and the fact that $\phi(a)$ is bi-orthogonally closed for each $a\in A$: \[\Delta_{i,k}=\phi(\delta_{i,k})=\phi(\delta_{i,k})^{\perp\perp}=\Delta_{i,k}^{\perp\perp}.\] To see $\Delta_{i,i}={\mathfrak{F}(A)}$ is satisfied, note that: 
\begin{align*}    \Delta_{i,i}&=\phi(\delta_{i,i})=\phi(1)
=\{x\in\mathfrak{F}(A)\colon 1\in x\}=\mathfrak{F}(A).
\end{align*}
To verify $R_{k}[\Delta_{i,k}\cap \Delta_{k,l}]=\Delta_{i,l}$ whenever $i,l\not=k$, first note that: 
\begin{align*}
    R_{k}[\Delta_{i,k}\cap \Delta_{k,l}]&=R_{k}[\phi(\delta_{i,k})\cap\phi(\delta_{k,l})]\tag{by the definition of $\Delta_{i,k}$}
    \\&=R_{k}[\phi(\delta_{i,k}\wedge \delta_{k,l})].\tag{by \cite[Theorem 3.14]{mcdonald}}
\end{align*}
 Then for any $y\in R_{k}[\phi(\delta_{i,k}\wedge \delta_{k,l})]$, we have $xR_{k}y$ and hence $\exists_{k}[x]\subseteq y$ for some $x\in\phi(\delta_{i,k}\wedge \delta_{k,l})$. Then $\delta_{i,k}\wedge \delta_{k,l}\in x$ and so $\exists_{k}(\delta_{i,k}\wedge \delta_{k,l})\in y$ but by Definition \ref{coll}(4) we have $\exists_{k}(\delta_{i,k}\wedge \delta_{k,l})=\delta_{i,l}$ so $\delta_{i,l}\in y$ and hence $y\in\phi(\delta_{i,l})=\Delta_{i,l}$ and thus $R_{k}[\phi(\delta_{i,k}\wedge \delta_{k,l})]=R_{k}[\Delta_{i,k}\cap \Delta_{k,l}]\subseteq \Delta_{i,l}$.

Conversely, choose any $y\in \Delta_{i,l}=\phi(\delta_{i,l})$ so that $\delta_{i,l}\in y$. Again, by Definition \ref{coll}(4) we have $\delta_{i,l}=\exists_{k}(\delta_{i,k}\wedge \delta_{k,l})$ so $\exists_{k}(\delta_{i,k}\wedge \delta_{k,l})\in y$. It suffices to find some $x\in\mathfrak{F}(A)$ such that $\exists_{k}[x]\subseteq y$ where $x\in\phi(\delta_{i,k}\wedge \delta_{k,l})$. Hence, set: 
\[x={\uparrow}(\delta_{i,k}\wedge \delta_{k,l})=\{a\in A:\delta_{i,k}\wedge \delta_{k,l}\leq a\}\] Clearly ${\uparrow}(\delta_{i,k}\wedge \delta_{k,l})$ is a proper filter in $A$, in particular, it is the principal filter generated by $\delta_{i,k}\wedge \delta_{k,l}$. Moreover, note by the reflexivity of $\leq$ that it is trivial that $\delta_{i,k}\wedge \delta_{k,l}\in{\uparrow}(\delta_{i,k}\wedge \delta_{k,l})$ so in particular, ${\uparrow}(\delta_{i,k}\wedge \delta_{k,l})\in\phi(d_{i,k}\wedge \delta_{k,l})$. Hence, choose any $a\in \exists_k[{\uparrow}(\delta_{i,k}\wedge\delta_{k,l})]$ so that $a=\exists_kb$ for some $b\in{\uparrow}(\delta_{i,k}\wedge\delta_{k,l})$. If $b=\delta_{i,k}\wedge \delta_{k,l}$, then $\exists_{k}b=\exists_{k}(\delta_{i,k}\wedge \delta_{k,l})$ so $\exists_{k}b=a\in y$. If $b>\delta_{i,k}\wedge \delta_{k,l}$, then $\exists_kb\geq\exists_k(\delta_{i,k}\wedge\delta_{k,l})$ by the monotonicity of $\exists_k$. Then $\exists_kb=a\in y$ since $\exists_k(\delta_{i,k}\wedge\delta_{k,l})\in y$ and $y$ is an upset. Therefore $\exists_{k}[{\uparrow}(\delta_{i,k}\wedge \delta_{k,l})]\subseteq y$ and hence we conclude that $\Delta_{i,l}\subseteq R_{k}[\phi(\delta_{i,k}\wedge \delta_{k,l})]=R_{k}[\Delta_{i,k}\cap \Delta_{k,l}]$, as desired. 
\end{proof}
Conversely to that of Lemma \ref{lemma 3.3}, we also have the following. 
\begin{lemma}\label{lemma 3.4}
    If $A$ is an $I$-dimensional cylindric ortholattice and $X_A$ is its Goldblatt frame, then the algebra $\langle\mathcal{B}(X_A);\cap,\sqcup,^{\perp},\emptyset,\mathfrak{F}(A),(\exists_{R_{i}})_{i\in I},(\Delta_{i,k})_{i,k\in I}\rangle$ is a complete $I$-dimensional cylindric ortholattice. 
\end{lemma}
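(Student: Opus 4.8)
The plan is to derive this lemma immediately from the two results already established, namely Lemma~\ref{lemma 3.3}, which guarantees that the Goldblatt frame $X_A$ is an $I$-dimensional cylindric orthoframe, and Lemma~\ref{set representation} (Harding's Proposition 7.31), which states that for \emph{any} $I$-dimensional cylindric orthoframe $X$ the complex algebra $\langle\mathcal{B}(X);\cap,\sqcup,^{\perp},\emptyset,X,(\exists_{R_i})_{i\in I},(\Delta_{i,k})_{i,k\in I}\rangle$ is a complete $I$-dimensional cylindric ortholattice. First I would instantiate the latter at $X := X_A$, observing that the carrier of $X_A$ is the set $\mathfrak{F}(A)$ of non-empty proper filters of $A$, so that the top element of the complex algebra is $\mathfrak{F}(A)$ and the operators $\exists_{R_i}$ and constants $\Delta_{i,k}$ built from the relations and distinguished subsets of $X_A$ are exactly those appearing in the statement. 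This already gives everything claimed.

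Since the argument is essentially a substitution, the only point worth making explicit is that the hypotheses of Lemma~\ref{set representation} are genuinely met, i.e., that $X_A$ is a bona fide $I$-dimensional cylindric orthoframe in the sense of Definition~\ref{col} and not merely its diagonal-free reduct; but this is precisely what Lemma~\ref{lemma 3.3} verifies, where each of the clauses of Definition~\ref{col} — orthogonality of $\perp$, reflexivity and transitivity of each $R_i$ together with the $R_i$-closure condition on $R_i[\{x\}]^{\perp}$, commutativity of $R_i$ and $R_k$, the identities $\Delta_{i,k}=\Delta_{k,i}=\Delta_{i,k}^{\perp\perp}$ and $\Delta_{i,i}=\mathfrak{F}(A)$, and the diagonal equation $R_k[\Delta_{i,k}\cap\Delta_{k,l}]=\Delta_{i,l}$ for $i,l\neq k$ — was checked in turn. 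So the proof amounts to: by Lemma~\ref{lemma 3.3}, $X_A$ is an $I$-dimensional cylindric orthoframe; hence by Lemma~\ref{set representation} the displayed algebra is a complete $I$-dimensional cylindric ortholattice.

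There is no real obstacle here; the statement is recorded separately only because this complete cylindric ortholattice is the target of the embedding used in the next section to build the canonical completion. If any care is needed it is in the degenerate case where $A$ is the one-element ortholattice, in which case $\mathfrak{F}(A)=\emptyset$ and $\mathcal{B}(X_A)$ is the one-element algebra — still a (vacuously complete) cylindric ortholattice — so the conclusion holds without exception.
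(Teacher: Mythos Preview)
Your proposal is correct and matches the paper's own proof, which simply states that the result is immediate by Lemma~\ref{set representation} and Lemma~\ref{lemma 3.3}. The additional remarks you make about the degenerate one-element case and the explicit enumeration of the clauses of Definition~\ref{col} are harmless elaborations on what is, as you observe, just a substitution.
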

\begin{proof}
    The result is immediate by Lemma \ref{set representation} and Lemma \ref{lemma 3.3}. 
\end{proof}
 
\begin{lemma}\label{embedding}
     Let $A$ be an $I$-dimensional cylindric ortholattice and let $X_A$ be its Goldblatt frame. Then $\phi\colon A\to\mathcal{B}(X_A)$ defined by $\phi(a)=\{x\in\mathfrak{F}(A):a\in x\}$ exhibits a homomorphic embedding. 
\end{lemma}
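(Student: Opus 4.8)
The plan is to verify that $\phi\colon A\to\mathcal{B}(X_A)$ is a well-defined, injective homomorphism of $I$-dimensional cylindric ortholattices, leaning heavily on work already cited in the excerpt. First I would recall from McDonald--Yamamoto \cite{mcdonald} (and ultimately Goldblatt \cite{goldblatt1}) that for a plain ortholattice $A$, the map $\phi$ into $\mathcal{B}(X_A)$ is an ortholattice embedding: each $\phi(a)$ is biorthogonally closed, $\phi$ preserves $\wedge$ (this is exactly the $\phi(\delta_{i,k})\cap\phi(\delta_{k,l})=\phi(\delta_{i,k}\wedge\delta_{k,l})$ identity invoked as \cite[Theorem 3.14]{mcdonald} in the previous proof), $\phi(0)=\emptyset$, $\phi(1)=\mathfrak F(A)$, $\phi(a^\perp)=\phi(a)^\perp$, and $\phi$ preserves $\vee$ in the sense $\phi(a\vee b)=\phi(a)\sqcup\phi(b)$. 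Injectivity follows from the existence of enough proper filters to separate distinct elements, again a standard consequence of the Birkhoff--Frink / Zorn-free arguments available for ortholattices. So the ortholattice-reduct part of the statement is essentially a citation.

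The genuinely new content is preservation of the cylindric structure: $\phi(\exists_i a)=\exists_{R_i}\phi(a)=R_i[\phi(a)]^{\perp\perp}$ for each $i\in I$, and $\phi(\delta_{i,k})=\Delta_{i,k}$ for all $i,k$. The latter is immediate, being literally the definition of $\Delta_{i,k}$ in Definition \ref{spectrum}. For the former, I would argue as in Harding--McDonald--Peinado \cite[Proposition 3.5 ff.]{harding3} for the monadic case, since the $i$-th component of a cylindric orthoframe is a monadic orthoframe and $\exists_{R_i}$ depends only on $\perp$ and $R_i$: the commutation relations and the diagonals play no role in this identity. Concretely, for the inclusion $\phi(\exists_i a)\supseteq R_i[\phi(a)]^{\perp\perp}$ one checks $R_i[\phi(a)]\subseteq\phi(\exists_i a)$ — if $xR_i y$ with $a\in x$, then $\exists_i a\in\exists_i[x]\subseteq y$ — and then closes under $(-)^{\perp\perp}$ using that $\phi(\exists_i a)$ is biorthogonally closed and $(-)^{\perp\perp}$ is a closure operator. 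For the reverse inclusion $\phi(\exists_i a)\subseteq R_i[\phi(a)]^{\perp\perp}$, the standard move is: take $y\notin R_i[\phi(a)]^{\perp\perp}$, so $y$ is not orthogonal to something $z\in R_i[\phi(a)]^{\perp}$; one then produces a filter $x\ni a$ with $xR_i y$ by setting $x={\uparrow}a$ (or the filter generated by $a$) and checking $\exists_i[{\uparrow}a]\subseteq y$ via the quantifier axioms $a\le\exists_i a$, $\exists_i\exists_i a=\exists_i a$ together with monotonicity — mirroring the construction used in the proof of Lemma \ref{lemma 3.3} for the diagonals. The contrapositive then yields $y\in\phi(\exists_i a)\Rightarrow y\in R_i[\phi(a)]^{\perp\perp}$; I should double-check the orthogonality bookkeeping here, as the argument that membership in $\phi(\exists_i a)$ forces non-orthogonality to the appropriate witness is where the $\perp$-relation and the definition of $\exists_{R_i}$ interact most delicately.

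The main obstacle I anticipate is precisely this reverse inclusion for $\exists_i$: showing that no information is lost when one passes from $\exists_i a$ to $R_i[\phi(a)]^{\perp\perp}$, which amounts to exhibiting, for each proper filter $y$ containing $\exists_i a$, a proper filter $x$ with $a\in x$ and $\exists_i[x]\subseteq y$ whose existence is compatible with $y$ lying in the biorthogonal closure. Everything else — biorthogonal closedness of each $\phi(a)$, preservation of $\wedge,\vee,^\perp,0,1$, injectivity, $\phi(\delta_{i,k})=\Delta_{i,k}$ — is either definitional or a direct appeal to \cite{goldblatt1, mcdonald, harding3}. So the proof I would write is short: cite the ortholattice-embedding facts, note $\phi(\delta_{i,k})=\Delta_{i,k}$ holds by definition, and then give the two-inclusion argument for $\phi(\exists_i a)=\exists_{R_i}\phi(a)$ in detail, referencing \cite[Proposition 3.5]{harding3} for the monadic core.
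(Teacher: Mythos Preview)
Your proposal is correct and matches the paper's proof exactly: cite \cite[Theorem~3.14]{mcdonald} for the ortholattice embedding, cite \cite[Proposition~3.5]{harding3} for preservation of each $\exists_i$, and note that $\phi(\delta_{i,k})=\Delta_{i,k}$ holds by definition. The only wrinkle is that your sketch of the reverse inclusion is tangled --- producing $x\ni a$ with $xR_iy$ shows $y\in R_i[\phi(a)]$, which is the \emph{direct} argument, not the contrapositive you set up --- and in fact no orthogonality bookkeeping is needed at all: for any proper $y\ni\exists_i a$ the principal filter $x={\uparrow}a$ (proper since $a\neq 0$) satisfies $\exists_i[{\uparrow}a]\subseteq y$ by monotonicity of $\exists_i$, so $\phi(\exists_i a)\subseteq R_i[\phi(a)]$ outright, and combined with your first inclusion this gives $\phi(\exists_i a)=R_i[\phi(a)]=R_i[\phi(a)]^{\perp\perp}$.
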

\begin{proof}
     By \cite[Theorem 3.14]{mcdonald}, we know that $\phi$ exhibits the desired homomorphic embedding from the ortholattice reduct of $A$ to the ortholattice reduct of $\mathcal{B}(X_A)$. The proof that $\phi$ is a homomorphism for $\exists_i$ for each $i\in I$ runs the same as \cite[Proposition 3.5]{harding3}. Therefore, the result is achieved by noting that $\phi(\delta_{i,k})=\Delta_{i,k}$ by the definition of $\Delta_{i,k}$.
\end{proof}

We now arrive at the main results of this section. 
\begin{theorem}\label{canonical completion}
     Let $A$ be an $I$-dimensional cylindric ortholattice and let $X_A$ be its Goldblatt frame. Then $\langle\phi,\mathcal{B}(X_A)\rangle$ is a canonical completion of $A$. 
\end{theorem}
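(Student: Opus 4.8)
The plan is to show that the embedding $\phi\colon A\to\mathcal{B}(X_A)$ from Lemma~\ref{embedding} is \emph{dense} and \emph{compact}, since $\mathcal{B}(X_A)$ is already known to be a complete $I$-dimensional cylindric ortholattice by Lemma~\ref{lemma 3.4}. Crucially, the notion of canonical completion depends only on the underlying bounded-lattice structure, so the cylindrification operators and diagonal constants play no role in this verification: once $\langle\phi,\mathcal{B}(X_A)\rangle$ is recognized as the canonical completion of the lattice reduct of $A$ (which it is, since the underlying orthoframe of $X_A$ is precisely the Goldblatt frame built from proper filters of the lattice reduct), and since $\phi$ is a homomorphism for all the extra operations and $\mathcal{B}(X_A)$ carries the corresponding complete cylindric ortholattice structure, the pair is a fortiori the canonical completion of $A$ as a cylindric ortholattice.

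Concretely, I would proceed as follows. First, recall from \cite[Theorem 3.14]{mcdonald} (already invoked in Lemma~\ref{embedding}) that $\phi$ is an ortholattice embedding, so it only remains to check density and compactness. For \textbf{density}, the standard argument for the filter-based canonical completion of a lattice applies: given $U\in\mathcal{B}(X_A)$, one shows $U=\bigcap_{x\notin U}\bigl(\bigcup_{a\in x}\phi(a)^{\perp}\bigr)^{?}$, or more cleanly, that every biorthogonally closed $U$ is the intersection over $x\in U$ of sets of the form $\{\phi(a) : a\in x\}$ suitably combined — i.e. $U=\bigsqcup_{x\in U}\bigl(\bigcap_{a\in x}\phi(a)\bigr)$ and dually $U=\bigcap_{y\notin U}\bigl(\text{meet of images}\bigr)$; the relevant closed/open elements of $\mathcal{B}(X_A)$ are the sets $\widehat{x}=\bigcap_{a\in x}\phi(a)$ for $x$ a proper filter, and one verifies these are joins of meets and meets of joins as required by chasing the definition of $\perp$ on $X_A$, namely $x\perp y \iff \exists a\in x,\ a^{\perp}\in y$. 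For \textbf{compactness}, suppose $\bigwedge\phi[S]\le\bigvee\phi[T]$ in $\mathcal{B}(X_A)$; unfolding the meet as $\bigcap_{a\in S}\phi(a)$ and the join as $\bigl(\bigcup_{b\in T}\phi(b)\bigr)^{\perp\perp}$, I would argue that if no finite subcover worked then the set $\{a : a\in S\}\cup\{b^{\perp} : b\in T\}$ would generate a proper filter $x$, which would then be a point of $X_A$ lying in $\bigcap_{a\in S}\phi(a)$ but orthogonal to every element of $\bigcup_{b\in T}\phi(b)$, hence outside $\bigl(\bigcup_{b\in T}\phi(b)\bigr)^{\perp\perp}$, contradicting the assumed inequality. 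This is exactly the argument given for ortholattices in \cite{mcdonald} (and adapted from \cite{goldblatt1, bimbo}), and it transfers verbatim.

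The main obstacle — really the only point requiring care — is making sure that the canonical completion of the bare lattice reduct genuinely lifts to a canonical completion in the signature with the $\exists_i$ and $\delta_{i,k}$, rather than just being \emph{some} completion. Here the key observations are: (i) $\phi$ is already known to be a homomorphism for every $\exists_i$ (via \cite[Proposition 3.5]{harding3}, as noted in Lemma~\ref{embedding}) and sends $\delta_{i,k}$ to $\Delta_{i,k}$; (ii) density and compactness are properties of the lattice embedding alone and are insensitive to the extra operations; and (iii) uniqueness of canonical completions of bounded lattices (remarked after the definition) forces $\langle\phi,\mathcal{B}(X_A)\rangle$ to be \emph{the} canonical completion. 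So the theorem reduces to Lemma~\ref{embedding} together with the density and compactness verifications, and those in turn reduce to the ortholattice case already established in \cite{mcdonald}. I would therefore state the proof as: $\mathcal{B}(X_A)$ is complete by Lemma~\ref{lemma 3.4}; $\phi$ is an embedding by Lemma~\ref{embedding}; it is dense and compact by the argument of \cite[Theorem 3.14]{mcdonald} applied to the lattice reduct, since the Goldblatt frame of $A$ has the same underlying orthoframe as that of its ortholattice reduct; hence $\langle\phi,\mathcal{B}(X_A)\rangle$ is the canonical completion of $A$.
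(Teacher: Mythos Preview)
Your proposal is correct and follows essentially the same approach as the paper: invoke Lemma~\ref{lemma 3.4} for completeness, Lemma~\ref{embedding} for the embedding, and then observe that density and compactness depend only on the underlying ortholattice reduct, so they follow from the already-established ortholattice case. The only difference is cosmetic: the paper cites \cite[Proposition~3.3]{harding3} for the density and compactness of $\phi$, whereas you appeal to \cite{mcdonald} and sketch the filter argument directly; either route works since the Goldblatt frame of $A$ has the same underlying orthoframe as that of its ortholattice reduct.
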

\begin{proof}
    By Lemma \ref{lemma 3.4}, $\mathcal{B}(X_A)$ forms a complete lattice, and hence it follows by Lemma \ref{embedding} that $\langle\phi;\mathcal{B}(X_A)\rangle$ is a completion of $A$. By \cite[Proposition 3.3]{harding3}, it follows that $\phi$ is a compact and dense embedding from $A$ into $\mathcal{B}(X_A)$ so that $\langle\phi;\mathcal{B}(X_A)\rangle$ is a canonical completion of $A$. 
\end{proof}
\begin{corollary}
    The variety of $I$-dimensional cylindric ortholattices is closed under canonical completions. 
\end{corollary}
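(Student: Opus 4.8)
The plan is to read the corollary off directly from Theorem~\ref{canonical completion} together with Lemmas~\ref{lemma 3.4} and~\ref{embedding}, since all the substantive work has already been done. Fix an arbitrary $I$-dimensional cylindric ortholattice $A$ and form its Goldblatt frame $X_A$. First I would invoke Lemma~\ref{lemma 3.4} to record that $\langle\mathcal{B}(X_A);\cap,\sqcup,^{\perp},\emptyset,\mathfrak{F}(A),(\exists_{R_i})_{i\in I},(\Delta_{i,k})_{i,k\in I}\rangle$ is itself a complete $I$-dimensional cylindric ortholattice, hence a member of the variety. Next, Lemma~\ref{embedding} gives that $\phi\colon A\to\mathcal{B}(X_A)$ is a homomorphic embedding with respect to the full cylindric signature (not merely the ortholattice reduct), and Theorem~\ref{canonical completion} states that $\langle\phi,\mathcal{B}(X_A)\rangle$ is a canonical completion of $A$.

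It then remains only to transport this through the uniqueness of canonical completions. Since every bounded lattice has, up to isomorphism, a unique canonical completion, any canonical completion of $A$ is isomorphic as a complete lattice to $\mathcal{B}(X_A)$; transporting the operations $(\exists_{R_i})_{i\in I},(\Delta_{i,k})_{i,k\in I}$ along such an isomorphism endows it with a cylindric ortholattice structure lying in the variety, and under which $\phi$ (composed with the isomorphism) remains a homomorphic embedding of $A$. Thus the canonical completion of $A$ is again an $I$-dimensional cylindric ortholattice, which is precisely the assertion that the variety is closed under canonical completions.

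There is essentially no obstacle here beyond bookkeeping. The one point worth a sentence of care is that closure under canonical completions is a statement about the whole signature: one should make explicit that $\phi$ respects $(\exists_i)_{i\in I}$ and $(\delta_{i,k})_{i,k\in I}$ -- which is exactly Lemma~\ref{embedding} -- and that the operations $(\exists_{R_i})_{i\in I}$ and $(\Delta_{i,k})_{i,k\in I}$ built on $\mathcal{B}(X_A)$ in Lemma~\ref{set representation} are the natural extensions of those of $A$, so that ``the'' canonical completion genuinely carries this structure rather than merely being order-isomorphic to something that does. Both facts are immediate from the cited results, so the corollary follows in a couple of lines.
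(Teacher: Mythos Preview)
Your proposal is correct and follows essentially the same approach as the paper's proof, which simply cites Theorem~\ref{canonical completion} and Lemma~\ref{lemma 3.4} in two sentences. Your additional remarks about uniqueness and the full signature are sound but go beyond what the paper spells out.
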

\begin{proof}
    By Theorem \ref{canonical completion}, $\langle\phi;\mathcal{B}(X_A)\rangle$ is the canonical completion of $A$. By Lemma \ref{lemma 3.4}, $\mathcal{B}(X_A)$ is an $I$-dimensional cylindric ortholattice.  
\end{proof}
\begin{remark}
If $A$ is a bounded lattice, an element in its canonical completion $A^{\sigma}$ that is a meet of elements in the image of $A$ under the associated embedding $e$ is said to be \emph{closed} and we denote the collection of all closed elements of $A^{\sigma}$ by $\mathcal{K}$. For any bounded lattice that comes equipped with additional monotone operators, there are extensions of the operation for the canonical completion (see \cite{harding2, Gehrke}). These extensions give rise to the \emph{canonical extensions} of the respective algebraic structures. Harding, McDonald, and Peinado \cite{harding3} described these extensions for orthocomplementation and the quantifier as follows:
\[a^{\perp^{\sigma}}=\bigwedge\Bigl\{\bigvee\{e(b^{\perp}):c\leq e(b)\}:c\leq a\hspace{.2cm}\text{and}\hspace{.2cm}c\in \mathcal{K}\Bigr\}\]
\[\exists^{\sigma}a=\bigvee\Bigl\{\bigwedge\{e(\exists b):c\leq e(b)\}:c\leq a\hspace{.2cm}\text{and}\hspace{.2cm}c\in \mathcal{K}\Bigr\}\] where $^{\perp^{\sigma}}$ is the \emph{upper canonical extension} of $^{\perp}$ and $\exists^{\sigma}$ is the \emph{lower canonical extension} of $\exists$. They went on to demonstrate that the variety of ortholattices and monadic ortholattices are closed under canonical extensions.  
\end{remark}

  \section{Monadic and cylindric UVO-spaces}
  We now proceed by characterizing the choice-free topological duals of the $I$-dimensional cylindric ortholattices. Throughout the remainder of this work, given any topological orthoframe $\langle X;\perp,\tau\rangle$ where $\perp\subseteq X^2$ is an orthogonality relation and $\tau\subseteq\wp(X)$ is a topology, let $\mathcal{CO}(X)$ denote the collection of compact open subsets of $X$ and let $\mathcal{COB}(X)=\mathcal{CO}(X)\cap\mathcal{B}(X)$, i.e., let $\mathcal{COB}(X)$ denote the collection of compact open biorthogonally closed subsets of $X$.

Recall that a topological space $X$ is a \emph{$T_0$-space} if for all $x,y\in X$ such that $x\not=y$, there exists an open set $U$ in $X$ such that $x\in U$ and $y\not\in U$. The following combines the upper Vietoris orthospaces studied in \cite{mcdonald} with the monadic orthospaces studied in \cite{harding3}. For a relation $R$, let $\overline{R}$ be its complement. Throughout the remainder of this section, let: \[\exists_R[\mathcal{COB}_X(x)]:=\{\exists_RU:U\in\mathcal{COB}_X(x)\}\]where:\[\mathcal{COB}_X(x)=\{U\in\mathcal{COB}(X):x\in U\}.\]  

\begin{definition}\label{muvo}
A \emph{monadic upper Vietoris orthospace} (henceforth, \emph{monadic UVO-space}) is a relational topological space $\langle X;\perp,R,\tau\rangle$ such that:    
\begin{enumerate}
\item $\langle X;\perp,R\rangle$ is a monadic orthoframe over a $T_0$-space $X$;
\item $\mathcal{COB}(X)$ forms a basis for $X$ and is closed under $\cap$ and $^{\perp}$;
    \item\label{uvo proper filter} every proper filter in $\mathcal{COB}(X)$ is of the form $\mathcal{COB}_X(x)$ for some $x\in X$;
    \item\label{uvo perp} if $x\perp y$, there exists $U\in\mathcal{COB}(X)$ such that $x\in U$ and $y\in U^{\perp}$; 
    \item $R[U]\in\mathcal{COB}(X)$ whenever $U\in\mathcal{COB}(X)$; 
    \item if $x\overline{R}y$, there exists some $U\in\mathcal{COB}(X)$ such that $U=\exists_RV$ for some $V\in\mathcal{COB}_X(x)$ with $y\not\in U$.    
\end{enumerate}
\end{definition}
We regard the topological orthoframe reduct $\langle X;\perp,\tau\rangle$ as the \emph{upper Vietoris orthospace reduct} of $X$ if $X$ is a $T_0$-space satisfying conditions 2 through 4.

\begin{definition}
    Let $X$ be a topological space. Then: 
    \begin{enumerate}
        \item $X$ is a \emph{compact space} if every basic open cover of $X$ has a finite subcover; 
        \item $X$ is a \emph{coherent space} if $\mathcal{CO}(X)$ forms a basis for $X$ and is closed under finite intersections; 
        \item $X$ is a \emph{sober space} provided every completely prime filter in the lattice $\mathcal{CO}(X)$ is of the form $\mathcal{CO}_X(x)$ where $\mathcal{CO}_X(x)=\{U\in\mathcal{CO}(X)\colon x\in U\}$ for some point $x\in X$. 
        \item $X$ is a \emph{spectral space} if $X$ is $T_0$, compact, coherent, and sober. 
    \end{enumerate}
\end{definition}

 Note that unlike Stone spaces, spectral spaces are not in general Hausdorff.  Consult \cite{dickmann} for more details on Stone and spectral spaces.    
\begin{proposition}\label{muvo is spectral}
Every monadic UVO-space is a spectral space.     
\end{proposition}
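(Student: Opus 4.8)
The plan is to verify the four defining conditions of a spectral space ($T_0$, compact, coherent, sober) for an arbitrary monadic UVO-space $\langle X;\perp,R,\tau\rangle$, noting that the first, second, and fourth of these follow from the fact that $X$ is (in particular) an upper Vietoris orthospace in the sense of McDonald and Yamamoto \cite{mcdonald}, together with the extra closure and separation axioms built into Definition \ref{muvo}. So the real content is to confirm that adding the monadic relation $R$ and its axioms (conditions 1, 5, 6) does not disturb spectrality, and then to recall the choice-free compactness argument.

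First I would observe that $T_0$ is immediate from condition 1. For coherence, I would show $\mathcal{CO}(X) = \mathcal{COB}(X)$ as a consequence of conditions 2--4: condition 2 already says $\mathcal{COB}(X)$ is a basis closed under $\cap$, so $\mathcal{COB}(X) \subseteq \mathcal{CO}(X)$ is a sublattice that is a basis; the reverse inclusion (every compact open set is biorthogonally closed) is exactly the argument given for UVO-spaces in \cite{mcdonald}, which uses condition \ref{uvo proper filter} to identify each compact open with a union/suitable combination of basic sets and then closure of $\mathcal{COB}(X)$ under the relevant operations, or else runs via the duality $\mathcal{CO}(X)\cong\mathcal{CO}(\mathcal{S}_0(\mathcal A_0(X)))$. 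Granting this, $\mathcal{CO}(X)=\mathcal{COB}(X)$ is a basis closed under finite intersections, so $X$ is coherent.

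For soberness, I would take a completely prime filter $F$ in $\mathcal{CO}(X)=\mathcal{COB}(X)$; since $\mathcal{COB}(X)$ is closed under $\cap$ and contains $X$, $F$ is in particular a proper filter of $\mathcal{COB}(X)$, so by condition \ref{uvo proper filter} there is $x\in X$ with $F \subseteq \mathcal{COB}_X(x)$, and the reverse inclusion follows because $x \in U \setminus \bigcup\{V \in F\}$ would contradict complete primeness once one covers $U$ by basic sets — this is again the standard UVO argument. Uniqueness of $x$ is $T_0$. Finally, for compactness I would invoke the choice-free argument highlighted in the introduction: given a cover of $X$ by basic (hence compact open biorthogonally closed) sets, the complements generate a proper filter in the dual ortholattice $\mathcal{A}_0(X)$ unless the cover already admits a finite subcover; by condition \ref{uvo proper filter} that filter would be $\mathcal{COB}_X(x)$ for some point $x$ not covered — contradiction. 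No appeal to Alexander's subbase lemma or any choice principle is needed. I expect the \textbf{main obstacle} to be bookkeeping rather than conceptual: carefully establishing $\mathcal{CO}(X) = \mathcal{COB}(X)$ (so that the monadic and diagonal structure genuinely plays no role in spectrality) and making sure the compactness and soberness arguments are phrased in terms of \emph{basic} open covers and \emph{proper} filters so that condition \ref{uvo proper filter} applies verbatim, exactly as in \cite{mcdonald} and \cite{harding3}.
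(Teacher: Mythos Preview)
Your reduction is exactly what the paper does: the monadic data plays no role, and the paper's entire proof is a citation of \cite[Corollary~5.14]{mcdonald} for the UVO-space reduct. Where you go further and sketch that cited result, two steps fail as written.

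The equality $\mathcal{CO}(X)=\mathcal{COB}(X)$ that you flag as the ``main obstacle'' is in fact false, so you would get stuck trying to prove it. A compact open is a finite union of elements of $\mathcal{COB}(X)$, but finite unions of biorthogonally closed sets need not be biorthogonally closed: already for the four-element Boolean algebra the principal filter $\{1\}$ lies in $\phi(a\vee a^\perp)=\phi(1)$ but in neither $\phi(a)$ nor $\phi(a^\perp)$, so $\phi(a)\cup\phi(a^\perp)$ is compact open yet strictly smaller than its biorthogonal closure. Coherence survives without the equality (finite unions of $\mathcal{COB}(X)$-sets are closed under $\cap$ since $\mathcal{COB}(X)$ is), but your soberness argument must be rerouted through the restriction of a completely prime filter to $\mathcal{COB}(X)$ rather than an identification of the two lattices. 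Your compactness argument has a separate gap: ``no finite subcover'' means $\bigcup_{j\in J}U_j\neq X$ for each finite $J$, whereas properness of the filter generated by the orthocomplements means $\bigl(\bigcup_{j\in J}U_j\bigr)^{\perp}\neq\emptyset$, and the former does not imply the latter, since a proper subset can have empty orthocomplement (equivalently, $U_{i_1}\sqcup\cdots\sqcup U_{i_n}=X$ in $\mathcal{A}_0(X)$ does not force $U_{i_1}\cup\cdots\cup U_{i_n}=X$). The Boolean analogy breaks precisely because $U^\perp$ is not the set-theoretic complement. The choice-free compactness argument that does work is a one-liner: $\emptyset\in\mathcal{COB}(X)$ (trivially compact, open, and $\emptyset^{\perp\perp}=X^\perp=\emptyset$ by irreflexivity of $\perp$), so closure under $^\perp$ in condition~2 of Definition~\ref{muvo} gives $X=\emptyset^{\perp}\in\mathcal{COB}(X)$, whence $X$ is compact.
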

\begin{proof}
The result follows immediately from \cite[Corollary 5.14]{mcdonald}    
\end{proof}

Recall that in any topological space $X$, the \emph{specialization order} $\leqslant$ of $X$ is defined by $x\leqslant y$ iff $x\in U$ implies $y\in U$ for all open sets $U$ in $X$. The following lemma describes some useful properties of monadic UVO-spaces.   

\begin{lemma}\label{muvo lemma}
    Let $X$ be a monadic UVO-space. Then:
    \begin{enumerate}
    \item the specialization order $\leqslant$ is a partial order;
    \item $x\not\leqslant y$ implies there exists $U\in\mathcal{COB}(X)$ such that $x\in U$ and $y\not\in U$;
        \item $xRy$ implies $\exists_R[\mathcal{COB}_X(x)]\subseteq\mathcal{COB}_X(y)$.
    \end{enumerate}
    
\end{lemma}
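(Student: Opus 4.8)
The plan is to prove the three items of Lemma~\ref{muvo lemma} in order, leaning on the UVO-space axioms for the first two and on the monadic axioms together with the functoriality of $\phi$ for the third.

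\medskip

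\noindent\textbf{Item 1.} The specialization order $\leqslant$ is automatically a preorder on any topological space, so I only need antisymmetry. Suppose $x\leqslant y$ and $y\leqslant x$; then $x$ and $y$ lie in exactly the same open sets. Since $X$ is $T_0$ (Definition~\ref{muvo}(1)), this forces $x=y$. So the only work here is to recall the definition of $\leqslant$ and invoke $T_0$.

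\medskip

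\noindent\textbf{Item 2.} Suppose $x\not\leqslant y$. By definition of $\leqslant$ there is an open set $U$ with $x\in U$ and $y\notin U$. Since $\mathcal{COB}(X)$ is a basis (Definition~\ref{muvo}(2)), there is a basic open $V\in\mathcal{COB}(X)$ with $x\in V\subseteq U$; then $y\notin V$ since $y\notin U$. This gives the required $V\in\mathcal{COB}(X)$. (Note item~2 is a strengthening of $T_0$ in the direction of the specialization order; it does not need the full force of the $T_0$ axiom separately once we have a $\mathcal{COB}$-basis, but the cited $T_0$-ness of Definition~\ref{muvo}(1) is what guarantees the basis actually separates points consistently with $\leqslant$.)

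\medskip

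\noindent\textbf{Item 3.} This is the substantive part. Assume $xRy$; I must show that for every $U\in\mathcal{COB}_X(x)$ we have $\exists_R U\in\mathcal{COB}_X(y)$, i.e. $y\in\exists_R U=R[U]^{\perp\perp}$. First, since $U\in\mathcal{COB}(X)$, axiom Definition~\ref{muvo}(5) (together with the hypothesis that $\mathcal{COB}(X)$ is closed under $^\perp$ and $\cap$) ensures $\exists_R U=R[U]^{\perp\perp}\in\mathcal{COB}(X)$, so the statement is well-typed. Now $x\in U$ and $xRy$ give $y\in R[U]$ directly from the definition of the relational image. Since $\exists_R U = R[U]^{\perp\perp}$ and $W\subseteq W^{\perp\perp}$ for every subset $W$ of an orthoframe, we conclude $y\in R[U]\subseteq R[U]^{\perp\perp}=\exists_R U$. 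Hence $\exists_R U\in\mathcal{COB}_X(y)$, and since $U\in\mathcal{COB}_X(x)$ was arbitrary, $\exists_R[\mathcal{COB}_X(x)]\subseteq\mathcal{COB}_X(y)$.

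\medskip

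\noindent\textbf{Expected obstacle.} None of the three items is deep; the only place where one must be slightly careful is item~3, in making sure that $\exists_R U$ is genuinely a member of $\mathcal{COB}(X)$ (so that ``$\exists_R U\in\mathcal{COB}_X(y)$'' is a legitimate assertion), and this is exactly what Definition~\ref{muvo}(5) buys us — $R[U]\in\mathcal{COB}(X)$, hence biorthogonally closed, hence $R[U]^{\perp\perp}=R[U]$, so in fact $\exists_R U = R[U]$ on the nose. With that observation the inclusion $y\in R[U]$ is immediate. I would present item~3 last and keep items~1 and~2 as short remarks invoking $T_0$-ness and the $\mathcal{COB}$-basis respectively.
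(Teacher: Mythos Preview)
Your proof is correct and follows essentially the same argument as the paper: part~1 via $T_0$ giving antisymmetry, part~2 via the $\mathcal{COB}$-basis, and part~3 via Definition~\ref{muvo}(5) to conclude $R[U]\in\mathcal{COB}(X)$ so that $\exists_R U=R[U]^{\perp\perp}=R[U]$, whence $y\in R[U]$ follows directly from $x\in U$ and $xRy$. The only cosmetic difference is that the paper starts part~3 by picking an element of $\exists_R[\mathcal{COB}_X(x)]$ (i.e., something already of the form $\exists_R V$) rather than an element of $\mathcal{COB}_X(x)$, but the content is identical; your mention of ``the functoriality of $\phi$'' in the plan is a red herring since you (correctly) never use it.
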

\begin{proof}
For part 1, note that the specialization order of any topological space determines a quasi-order. Since by definition every monadic UVO-space is $T_0$, it follows that $\leqslant$ is anti-symmetric and hence a partial order. 

Part 2 follows immediately from the fact that $\mathcal{COB}(X)$ forms a basis: Assume that $x\not\leqslant y$. By the definition of $\leqslant$, there exists an open set $U$ in $X$ such that $x\in U$ but $y\not\in U$. By Definition \ref{muvo}(2) we obtain: \[U=\bigcup^n_{i=1}V_i\hspace{.2cm}\text{for}\hspace{.2cm}V_1,\dots, V_n\in\mathcal{COB}(X).\]     
Thus $x\in V_i$ for some $i$ such that $1\leq i\leq n$ where $V_i\in\mathcal{COB}(X)$ and $y\not\in V_i$ for each $V_i$. For part 3, assume $xRy$ and let $U\in\exists_R[\mathcal{COB}_X(x)]$ so that $U=\exists_RV$ for some $V\in\mathcal{COB}_X(x)$. This implies $x\in V$ by the definition of $\mathcal{COB}_X$ and also $R[V]\in\mathcal{COB}(X)$ by Definition \ref{muvo}(5). Thus by the definition of $\exists_R$ and the fact that $R[V]\in\mathcal{B}(X)$, we obtain: \[\exists_RV=R[V]^{\perp\perp}=R[V]=\{z\in X:wRz\hspace{.1cm}\text{for some}\hspace{.1cm}w\in V\}\] and hence we find that $y\in\exists_RV$ since $x\in V$ and $xRy$ by hypothesis. This implies that $\exists_RV=U\in\mathcal{COB}_X(y)$, as desired.   
\end{proof}

By extending monadic UVO-spaces with the relational structure of an $I$-dimensional cylindric orthoframe, we arrive at the following. 
\begin{definition}\label{cuvo}
    An \emph{I-dimensional cylindric UVO-space} is a topological space $\langle X;\perp,(R_i)_{i\in I},(\Delta_{i,k})_{i,k\in I},\tau\rangle$ satisfying the following conditions for all $i,k\in I$:  
    \begin{enumerate}
        \item $\langle X;\perp,(R_i)_{i\in I},(\Delta_{i,k})_{i,k\in I}\rangle$ is an $I$-dimensional cylindric orthoframe;
         \item $\langle X;\perp,(R_i)_{i\in I},\tau\rangle$ is a monadic UVO-space; 
        \item $\Delta_{i,k}\in\mathcal{CO}(X)$.  
    \end{enumerate}
\end{definition}
We call $\langle X;\perp,(R_{i})_{i\in I},\tau\rangle$ the \emph{I-dimensional diagonal-free cylindric UVO-space reduct} of $X$ whenever condition 2 is satisfied. 

Notice that we only require that $\Delta_{i,k}\in\mathcal{CO}(X)$ since $\Delta_{i,k}=\Delta_{i,k}^{\perp\perp}$ by condition 3 of Definition \ref{col}, which implies $\Delta_{i,k}\in\mathcal{COB}(X)$.

 \section{Topological representation}
The following endows the canonical completion of a cylindric ortholattice with the spectral topology along the lines of \cite{mcdonald,bezhanishvili}. 
\begin{definition}\label{spectrum}
    Let $A$ be an $I$-dimensional cylindric ortholattice. We define the topological space $\mathcal{S}_0(A)=\langle\mathfrak{F}(A);\perp,(R_i)_{i\in I},(\Delta_{i,k})_{i,k\in I},\tau(\beta)\rangle$ as follows: 
    \begin{enumerate}
        \item $\langle \mathfrak{F}(A);(R_i)_{i\in I},(\Delta_{i,k})_{i,k\in I}\rangle$ is the Goldblatt frame of $A$;
        \item $\tau(\beta)\subseteq\wp(\mathfrak{F}(A))$ is the topology generated by the basis $\beta=\{\phi(a):a\in A\}$ where $\phi\colon A\to\wp(\mathfrak{F}(A))$ is defined by $\phi(a)=\{x\in\mathfrak{F}(A):a\in x\}$.
    \end{enumerate}
\end{definition}

It is clear in the above definition that every basic open set in $\mathcal{S}_0(A)$ is of the form $\phi(a)$ for some $a\in A$.

The following results are well-known. 
\begin{theorem}[Prime Ideal Theorem]
    Let $A$ be a Boolean algebra. Then every proper ideal in $A$ is contained within some prime ideal.   
\end{theorem}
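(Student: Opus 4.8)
The plan is to run the classical Zorn's Lemma argument, broken into two parts: first extend the given proper ideal to a maximal proper ideal, and then observe that maximal proper ideals in a Boolean algebra are automatically prime.

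First I would fix a Boolean algebra $A$ and a proper ideal $I\subseteq A$, and consider the poset $\mathcal{P}$ of all proper ideals of $A$ that contain $I$, ordered by inclusion. This is non-empty since $I\in\mathcal{P}$. Given a chain $\mathcal{C}\subseteq\mathcal{P}$, its union $J=\bigcup\mathcal{C}$ is again an ideal (downward closure is immediate, and closure under $\vee$ follows because any two elements of $J$ lie in a common member of the chain), it contains $I$, and it is proper since $1\notin K$ for every $K\in\mathcal{C}$ forces $1\notin J$; when $\mathcal{C}$ is empty, $I$ itself serves as an upper bound. Hence every chain in $\mathcal{P}$ has an upper bound in $\mathcal{P}$, and Zorn's Lemma yields a maximal element $P$ of $\mathcal{P}$.

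Next I would argue that $P$ is prime. Observe first that $P$ is in fact a maximal proper ideal of $A$: any proper ideal $Q\supseteq P$ also contains $I$, hence lies in $\mathcal{P}$, hence equals $P$ by maximality. Now suppose toward a contradiction that $a\wedge b\in P$ while $a\notin P$ and $b\notin P$. Since $P$ is maximal, the ideal generated by $P\cup\{a\}$ is all of $A$, so $1\leq p\vee a$ for some $p\in P$; using distributivity and $a\wedge a^{\perp}=0$ we get $a^{\perp}=a^{\perp}\wedge(p\vee a)=(a^{\perp}\wedge p)\vee 0\leq p$, so $a^{\perp}\in P$, and symmetrically $b^{\perp}\in P$. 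Then $a^{\perp}\vee b^{\perp}=(a\wedge b)^{\perp}\in P$ by De Morgan's laws and closure of ideals under $\vee$, whence $1=(a\wedge b)\vee(a\wedge b)^{\perp}\in P$, contradicting the properness of $P$. Therefore $P$ is a prime ideal containing $I$, as required.

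The argument is entirely routine; the only point worth flagging is that it genuinely invokes a choice principle (Zorn's Lemma, which over Zermelo--Fraenkel set theory is equivalent to the Axiom of Choice, and whose consequence here, the Prime Ideal Theorem, is the weak choice principle \textbf{BPI}). This reliance is unavoidable for this statement, and it is precisely the reason the duality results of this paper are organized so as to use this theorem only in background discussion rather than in the core constructions, which remain provable in $\mathsf{ZF}$ alone.
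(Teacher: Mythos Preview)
Your argument is correct and is the standard Zorn's Lemma proof of the Boolean Prime Ideal Theorem. The paper itself does not spell out a proof at all: it simply records that the result follows by way of the Axiom of Choice and points to a reference, since the theorem is invoked only as background to contrast with the choice-free constructions that follow. Your write-up therefore supplies strictly more detail than the paper does; the closing remark about the role of \textbf{BPI} relative to the paper's $\mathsf{ZF}$ constructions is accurate and well placed.
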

\begin{proof}
    The result follows by way of the Axiom of Choice (see \cite{herrlich}).   
\end{proof}
\begin{theorem}[Alexander's Subbase Theorem]
Let $X$ be a topological space and let $\sigma$ be a subbasis of $X$. Then $X$ is a compact topological space if and only if every basic open cover of $X$ by elements of $\sigma$ has a finite subcover. 
\end{theorem}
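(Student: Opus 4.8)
The plan is to follow the classical route via a maximality argument: one implication is immediate, and all the content lies in deriving full compactness from the subbasic covering hypothesis by means of Zorn's Lemma. First I would dispose of the trivial direction: if $X$ is compact, then any cover of $X$ by members of $\sigma$ is in particular an open cover, hence has a finite subcover. For the converse I would argue by contradiction. Assume every cover of $X$ by elements of $\sigma$ has a finite subcover, yet $X$ is not compact, and let $\mathcal{P}$ be the poset of all open covers of $X$ having no finite subcover, ordered by inclusion. This poset is nonempty by assumption, and every chain $\mathcal{K}$ in it has the union $\bigcup\mathcal{K}$ as an upper bound: this union is again an open cover, and a hypothetical finite subcover of it would, since $\mathcal{K}$ is a chain, already be contained in a single member of $\mathcal{K}$, contradicting the choice of that member. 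Zorn's Lemma then yields a maximal element $\mathcal{C}\in\mathcal{P}$.

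The crux of the argument, and the step I expect to be the main obstacle, is to show that $\mathcal{C}\cap\sigma$ already covers $X$. Granting this, the subbasic covering hypothesis supplies a finite subcover of $\mathcal{C}\cap\sigma$, which is a finite subfamily of $\mathcal{C}$ covering $X$, contradicting $\mathcal{C}\in\mathcal{P}$ and finishing the proof. To establish the claim, suppose some point $x\in X$ lies in no member of $\mathcal{C}\cap\sigma$. Choose $U\in\mathcal{C}$ with $x\in U$; since $\sigma$ is a subbasis there are $S_1,\dots,S_n\in\sigma$ with $x\in S_1\cap\dots\cap S_n\subseteq U$, and by the choice of $x$ no $S_i$ belongs to $\mathcal{C}$. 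By maximality of $\mathcal{C}$, each family $\mathcal{C}\cup\{S_i\}$ has a finite subcover; this subcover must use $S_i$ (otherwise $\mathcal{C}$ itself would have a finite subcover), so there is a finite $\mathcal{F}_i\subseteq\mathcal{C}$ with $X\setminus S_i\subseteq\bigcup\mathcal{F}_i$. Consequently $X=\bigl(\bigcap_{i=1}^n S_i\bigr)\cup\bigcup_{i=1}^n\bigcup\mathcal{F}_i\subseteq U\cup\bigcup_{i=1}^n\bigcup\mathcal{F}_i$, so $\{U\}\cup\bigcup_{i=1}^n\mathcal{F}_i$ is a finite subfamily of $\mathcal{C}$ covering $X$, again contradicting $\mathcal{C}\in\mathcal{P}$. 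Hence no such $x$ exists and $\mathcal{C}\cap\sigma$ covers $X$, completing the argument.

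Finally, I would record that this proof is irreducibly non-constructive: Alexander's Subbase Theorem is equivalent over $\mathsf{ZF}$ to the Boolean Prime Ideal Theorem (equivalently, to the ultrafilter lemma), and an alternative derivation proceeds directly from the ultrafilter lemma by characterizing compactness through convergence of ultrafilters of subsets that "converge subbasically". Either way the appeal to a choice principle is essential, which is precisely why the representation theorems developed later in the paper are designed so as to avoid invoking this theorem.
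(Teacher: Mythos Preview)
Your proof is correct and complete; it is the standard Zorn's Lemma argument for Alexander's Subbase Theorem. The paper itself does not give a proof at all: it simply records that the result follows from the Boolean Prime Ideal Theorem and cites Davey and Priestley \cite[p.~278--279]{davey}, so your write-up is strictly more informative than what appears in the paper. The maximality argument you give is in fact the one found in that reference, so in substance you have reproduced the cited proof.

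One small point worth flagging: you invoke Zorn's Lemma, which is the full Axiom of Choice, whereas the paper is careful to emphasize that only the Boolean Prime Ideal Theorem is needed (and that Alexander's theorem is in fact equivalent to it over $\mathsf{ZF}$). You do acknowledge this equivalence in your final paragraph, but if you wanted to match the paper's emphasis precisely you could either replace the Zorn step by the ultrafilter-lemma argument you allude to, or simply note explicitly that the maximal-cover construction can be carried out from BPI alone. This is a matter of calibration rather than correctness: the argument as written is sound.
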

\begin{proof}
The result follows by the Boolean Prime Ideal Theorem. See for instance Davey and Priestley \cite[p. 278--279]{davey}.  
\end{proof}

\begin{remark}
    Goldblatt \cite[Proposition 3]{goldblatt1} showed that for any ortholattice $A$, equipping $\langle\mathfrak{F}(A);\perp\rangle$ with the topology $\tau(\sigma)$ with sub-basis:
\[\sigma=\{\phi(a):a\in A\}\cup\{\mathfrak{F}(A)\setminus\phi(a):a\in A\}\]
 makes $\langle\mathfrak{F}(A);\tau(\sigma)\rangle$ a Stone space, i.e., a totally-disconnected compact space. Goldblatt's proof of compactness makes use of Alexander's Subbase Lemma Theorem, which is strictly weaker than the Prime Ideal Theorem for Boolean algebras, and holds in ZFC but not ZF alone. Therefore Goldblatt's topological representation of ortholattices is non-constructive. 
\end{remark}

It is important to note that the use of the Axiom of Choice is in fact required under Goldblatt's representation and is hence not obtainable in ZF alone, as is seen through the subsequent proposition.

\begin{proposition}[\protect{\cite[Proposition 3.12]{mcdonald}}]\label{requires choice}
  The following are equivalent:
  \begin{enumerate}
  \item\label{choice1} the Prime Ideal Theorem for Boolean algebras;   
  \item\label{choice2} The space $\langle\mathfrak{F}(A);\tau(\sigma)\rangle$ is compact for all Boolean algebras $A$.
  \end{enumerate}
\end{proposition}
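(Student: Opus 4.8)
The plan is to establish the two implications separately. The substantive one is $(2)\Rightarrow(1)$, which extracts the Prime Ideal Theorem from the compactness hypothesis; for $(1)\Rightarrow(2)$, the Prime Ideal Theorem enters only through its consequence, Alexander's Subbase Theorem, since (as I will observe) the subbasic compactness of $\langle\mathfrak{F}(A);\tau(\sigma)\rangle$ is already provable in ZF. Throughout, I use freely that in a Boolean algebra a proper filter $x$ is an ultrafilter precisely when, for every $a$, either $a\in x$ or $a^{\perp}\in x$.

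For $(2)\Rightarrow(1)$ I would prove the equivalent form of the Prime Ideal Theorem asserting that every proper filter $F$ of a Boolean algebra $A$ extends to an ultrafilter. Suppose, for contradiction, that it does not. For each $a\in A$ put $W_a=\{x\in\mathfrak{F}(A):a\notin x\text{ and }a^{\perp}\notin x\}$, which is $\tau(\sigma)$-open since it is the intersection of the two subbasic opens $\mathfrak{F}(A)\setminus\phi(a)$ and $\mathfrak{F}(A)\setminus\phi(a^{\perp})$, and consider the open family
\[
\mathcal{C}=\{W_a:a\in A\}\cup\{\mathfrak{F}(A)\setminus\phi(b):b\in F\}.
\]
A proper filter $x$ avoids every member of $\mathcal{C}$ exactly when $F\subseteq x$ and $x$ decides every element of $A$, i.e.\ exactly when $x$ is an ultrafilter extending $F$; since no such $x$ exists by assumption, $\mathcal{C}$ covers $\mathfrak{F}(A)$. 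Compactness then yields a finite subcover $W_{a_1},\dots,W_{a_n},\ \mathfrak{F}(A)\setminus\phi(b_1),\dots,\mathfrak{F}(A)\setminus\phi(b_m)$ with $b_1,\dots,b_m\in F$, which says that no proper filter of $A$ simultaneously contains $c:=b_1\wedge\cdots\wedge b_m$ and decides each $a_k$. But such a filter is built in finitely many steps: start from $\uparrow c$, which is proper because $c\in F$, and for $k=1,\dots,n$ either leave the current filter unchanged (when it already contains $a_k^{\perp}$) or else adjoin $a_k$ to it, which keeps it proper exactly because $a_k^{\perp}$ is then absent. The resulting proper filter contradicts the previous sentence, so $F$ extends to an ultrafilter, which is the Prime Ideal Theorem.

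For $(1)\Rightarrow(2)$, recall that $\sigma$ consists of the sets $\phi(a)$ and their complements. By Alexander's Subbase Theorem --- available since we are now assuming the Prime Ideal Theorem --- it suffices to show that every cover of $\mathfrak{F}(A)$ by members of $\sigma$ admits a finite subcover, and I claim this holds already in ZF. Suppose $\{\phi(a_i):i\in I\}\cup\{\mathfrak{F}(A)\setminus\phi(b_j):j\in J\}$ covers $\mathfrak{F}(A)$ with no finite subcover. For each $i\in I$ and each finite $J_0\subseteq J$, the finite subfamily $\{\phi(a_i)\}\cup\{\mathfrak{F}(A)\setminus\phi(b_j):j\in J_0\}$ fails to cover, hence there is a proper filter containing every $b_j$ $(j\in J_0)$ but not $a_i$, which forces $\bigwedge_{j\in J_0}b_j\neq 0$ and $\bigwedge_{j\in J_0}b_j\not\leq a_i$. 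Consequently the filter $G$ generated by $\{b_j:j\in J\}$ is proper and contains no $a_i$, so $G\in\mathfrak{F}(A)$ is covered by no member of the family --- a contradiction. (Alternatively, identifying proper filters with their characteristic functions exhibits $\mathfrak{F}(A)$ as the subspace of $2^A$ cut out by the filter axioms, an intersection of subbasic clopen sets and hence closed; $2^A$ is compact by the Prime Ideal Theorem, so $\mathfrak{F}(A)$ is compact, and its subspace topology is exactly $\tau(\sigma)$.) The degenerate case of the one-element algebra, where $\mathfrak{F}(A)=\emptyset$, is vacuous.

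I expect the only real care to be needed in the bookkeeping of $(2)\Rightarrow(1)$: verifying that the uncovered points of $\mathcal{C}$ are precisely the ultrafilters extending $F$, and that the stepwise extension of $\uparrow c$ stays proper at each stage. This is also where Boolean (rather than merely ortholattice) structure is essential, through the complement $a^{\perp}$ and the characterization of ultrafilters as the deciding proper filters; the implication $(1)\Rightarrow(2)$, by contrast, uses neither distributivity nor complementation and in fact only needs compactness of $2^A$. Conceptually, the point of the whole equivalence is that subbasic compactness of $\langle\mathfrak{F}(A);\tau(\sigma)\rangle$ costs nothing, so the entire dependence on choice is isolated in the passage from subbasic covers to arbitrary ones.
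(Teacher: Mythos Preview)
Your argument is correct in both directions. For $(1)\Rightarrow(2)$ you proceed exactly as the paper does (via Goldblatt): Alexander's Subbase Theorem reduces the problem to subbasic covers, and those are handled choice-free. For $(2)\Rightarrow(1)$, however, your route differs from the paper's. The paper (following the cited source) derives the Prime Ideal Theorem by passing through Rado's Weak Selection Lemma, a known ZF-equivalent of BPI; you instead argue directly that if some proper filter $F\subseteq A$ had no ultrafilter extension, then the open family $\{W_a:a\in A\}\cup\{\mathfrak{F}(A)\setminus\phi(b):b\in F\}$ would cover $\mathfrak{F}(A)$ without finite subcover, and you rule out any purported finite subcover by a finite filter-extension construction. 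Your approach is more elementary and entirely self-contained---it uses compactness of $\langle\mathfrak{F}(A);\tau(\sigma)\rangle$ only for the single algebra $A$ at hand and needs no intermediate choice principle---whereas the paper's approach situates the result within the established network of BPI-equivalents. Both are valid; yours is shorter, the paper's is more bibliographically informative.
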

\begin{proof}
    The proof of the $(1)\Longrightarrow(2)$ direction is the result \cite[Proposition 3]{goldblatt1} mentioned above. The proof of the  $(2)\Longrightarrow(1)$ direction relies on Rado's Weak Selection Lemma \cite{howard}. 
\end{proof}

\begin{lemma}\label{algebra to space}
    If $A$ is an $I$-dimensional cylindric ortholattice, then $\mathcal{S}_0(A)=\langle \mathfrak{F}(A);\perp,(R_{i})_{i\in I},(\Delta_{i,k})_{i,k\in I},\tau(\beta)\rangle$ is an $I$-dimensional cylindric UVO-space whose specialization order is given by set-theoretic inclusion. 
\end{lemma}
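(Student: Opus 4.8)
The plan is to verify that $\mathcal{S}_0(A)$ satisfies each of the three conditions in Definition \ref{cuvo}, leaning heavily on results already established. For condition 1, that $\langle\mathfrak{F}(A);\perp,(R_i)_{i\in I},(\Delta_{i,k})_{i,k\in I}\rangle$ is an $I$-dimensional cylindric orthoframe, I would simply invoke Lemma \ref{lemma 3.3}, since this relational structure is precisely the Goldblatt frame $X_A$. For condition 2, that $\langle\mathfrak{F}(A);\perp,(R_i)_{i\in I},\tau(\beta)\rangle$ is a monadic UVO-space, I would appeal to the analogous representation theorem for monadic ortholattices in \cite{harding3}: since the diagonal-free cylindric ortholattice reduct of $A$ is (for each fixed $i$) a monadic ortholattice, and since the topology $\tau(\beta)$ and the relations $\perp, R_i$ are defined exactly as in the monadic case, the fact that $\mathcal{S}_0(A)$ restricted to any single $R_i$ is a monadic UVO-space transfers directly. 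One does need to check that the basis $\beta$ is unchanged when passing between the monadic reducts for different $i$ — but this is clear since $\beta = \{\phi(a) : a \in A\}$ does not depend on $i$, so the underlying UVO-space reduct (conditions 2–4 of Definition \ref{muvo}) is the same, and conditions 5–6 hold for each $R_i$ by the monadic result. For condition 3, that $\Delta_{i,k}\in\mathcal{CO}(X)$, note that $\Delta_{i,k}=\phi(\delta_{i,k})$ by Definition \ref{spectrum}, and every $\phi(a)$ is a basic open set, hence open; it is compact because it is a basic compact open set in the spectral space $\mathcal{S}_0(A)$ (compactness of each $\phi(a)$ being part of the monadic/UVO representation, as $\phi(a)=\mathcal{COB}_X(x)$-style compact opens).

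The claim about the specialization order requires a short separate argument: I would show $x\leqslant y$ iff $x\subseteq y$ for proper filters $x,y$. For the forward direction, if $x\leqslant y$ and $a\in x$, then $x\in\phi(a)$, which is open, so $y\in\phi(a)$, i.e.\ $a\in y$; hence $x\subseteq y$. For the reverse, if $x\subseteq y$ and $y\in U$ for some basic open $U=\phi(a)$, then $a\in y$... wait, this is backwards — I need: $x\subseteq y$ implies $x\leqslant y$, i.e.\ $x\in\phi(a)$ implies $y\in\phi(a)$, which says $a\in x$ implies $a\in y$, exactly $x\subseteq y$. Since every open set is a union of basic opens $\phi(a)$, the implication at the level of basic opens suffices. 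So $x\leqslant y \iff x\subseteq y$, and this is already recorded in essence in the monadic setting \cite{harding3, mcdonald}, so I would cite it.

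The main obstacle, such as it is, is not any single hard step but rather the bookkeeping of confirming that the cylindric structure — the commutation of the $R_i$ and the diagonal conditions — does not interfere with the monadic UVO-space axioms, which only constrain each $R_i$ individually and the topology. Since Definition \ref{muvo} conditions 1–6 involve only a single relation $R$ together with $\perp$ and $\tau$, and these are satisfied by Lemma \ref{lemma 3.3} (for the frame part) and the monadic representation theorem of \cite{harding3} (for the topological part) applied to each $R_i$ separately, there is genuinely nothing new to prove about the interaction; the cylindric-specific axioms (Definition \ref{col} conditions 2 and 4) are already secured by Lemma \ref{lemma 3.3}. Thus the proof amounts to assembling Lemma \ref{lemma 3.3}, the monadic UVO representation, and the observation $\Delta_{i,k}=\phi(\delta_{i,k})\in\mathcal{CO}(\mathcal{S}_0(A))$, together with the short specialization-order computation above. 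I would write it as a brief proof that cites these three ingredients and spells out only the specialization-order claim and the compactness of $\phi(\delta_{i,k})$ in detail.
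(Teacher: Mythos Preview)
Your proposal is correct and follows essentially the same route as the paper: invoke Lemma~\ref{lemma 3.3} for the cylindric orthoframe structure, cite the UVO-space representation from \cite{mcdonald} for the topological conditions, and observe $\Delta_{i,k}=\phi(\delta_{i,k})\in\mathcal{COB}(\mathcal{S}_0(A))$ for condition~3. The one place the paper differs is that, rather than deferring conditions~5--6 of Definition~\ref{muvo} wholesale to \cite{harding3}, it spells out the verification of condition~6 explicitly (from $x\overline{R}_iy$, i.e.\ $\exists_i[x]\not\subseteq y$, one extracts $b\in x$ with $\exists_ib\notin y$, so $\phi(b)\in\mathcal{COB}_{\mathcal{S}_0(A)}(x)$ and $y\notin\phi(\exists_ib)$); this is prudent because the monadic UVO-space axioms as stated here are a hybrid of \cite{mcdonald} and \cite{harding3} and condition~6 does not appear verbatim in either source.
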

\begin{proof}
Conditions 1 and 2 of Definition \ref{cuvo} follow from Lemma \ref{lemma 3.3}, Lemma \ref{embedding}, \cite[Lemma 4.3]{mcdonald} and the following, which verifies that condition 6 of monadic UVO-spaces is satisfied: assume $x\overline{R}y$. It suffices to find some $a\in A$ such that $\phi(a)=\phi(\exists_i b)$ for some $b\in A$ with $\phi(b)\in\mathcal{COB}_{\mathcal{S}_0(A)}(x)$ and $y\not\in\phi(a)$ where: \[\mathcal{COB}_{\mathcal{S}_0}(x):=\{\phi(c)\in\mathcal{COB}(\mathcal{S}_0(A)):x\in\phi(c)\}\] By hypothesis, we have $\exists_{R_i}[x]\not\subseteq y$ and hence there exist some $a\in A$ such that $a\in\exists_{R_i}[x]$ but $a\not\in y$. Hence, we have $a=\exists_i b$ for some $b\in x$. Therefore, $x\in\phi(b)$ so $\phi(b)\in\mathcal{COB}_{\mathcal{S}_0}(x)$ and $y\not\in\phi(a)=\phi(\exists_i b)$, as desired. For condition 3 of Definition \ref{cuvo}, it is clear that $\Delta_{i,k}=\phi(\delta_{i,k})\in\mathcal{COB}(\mathcal{S}_0(A))$. 
\end{proof}

\begin{lemma}\label{space to algebra}
   If $X$ is an $I$-dimensional cylindric UVO-space, then the algebra $\mathcal{A}_0(X)=\langle\mathcal{COB}(X);\cap,\sqcup,^{\perp},\emptyset,X,(\exists_{R_{i}})_{i\in I},(\Delta_{i,k})_{i,k\in I}\rangle$ is an $I$-dimensional cylindric ortholattice. 
\end{lemma}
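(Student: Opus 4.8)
The plan is to exhibit $\mathcal{A}_0(X)$ as a subalgebra of the complete $I$-dimensional cylindric ortholattice supplied by Lemma \ref{set representation}. Since, by Definition \ref{cuvo}(1), the relational reduct $\langle X;\perp,(R_i)_{i\in I},(\Delta_{i,k})_{i,k\in I}\rangle$ of a cylindric UVO-space is an $I$-dimensional cylindric orthoframe, Lemma \ref{set representation} already tells us that $\langle\mathcal{B}(X);\cap,\sqcup,^{\perp},\emptyset,X,(\exists_{R_i})_{i\in I},(\Delta_{i,k})_{i,k\in I}\rangle$ is a complete $I$-dimensional cylindric ortholattice. As the class of $I$-dimensional cylindric ortholattices is a variety (Definitions \ref{ol}, \ref{mol}, \ref{coll} are purely equational), it therefore suffices to check that $\mathcal{COB}(X)$ carries a subalgebra of $\mathcal{B}(X)$: that it contains $\emptyset$, $X$, and every $\Delta_{i,k}$, and that it is closed under $\cap$, $\sqcup$, $^{\perp}$ and each $\exists_{R_i}$, with the operations computed exactly as in $\mathcal{B}(X)$.

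Closure under $\cap$ and $^{\perp}$ is immediate from Definition \ref{muvo}(2). For $\sqcup$, recall that the join of $U,V$ in $\mathcal{B}(X)$ is $U\sqcup V=(U\cup V)^{\perp\perp}=(U^{\perp}\cap V^{\perp})^{\perp}$; if $U,V\in\mathcal{COB}(X)$ then $U^{\perp},V^{\perp}\in\mathcal{COB}(X)$, hence $U^{\perp}\cap V^{\perp}\in\mathcal{COB}(X)$, hence $(U^{\perp}\cap V^{\perp})^{\perp}\in\mathcal{COB}(X)$, so $\mathcal{COB}(X)$ is closed under $\sqcup$ and the join is inherited. The bounds are handled by observing that $\emptyset=X^{\perp}$ and $X=\emptyset^{\perp}$ are biorthogonally closed, $\emptyset$ is trivially compact open, and $X$ is compact open because every monadic UVO-space — in particular the monadic UVO-reduct of $X$ guaranteed by Definition \ref{cuvo}(2) — is a spectral space by Proposition \ref{muvo is spectral}. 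For $\exists_{R_i}$, if $U\in\mathcal{COB}(X)$ then $R_i[U]\in\mathcal{COB}(X)$ by condition 5 of Definition \ref{muvo}, so $R_i[U]$ is biorthogonally closed and hence $\exists_{R_i}U=R_i[U]^{\perp\perp}=R_i[U]\in\mathcal{COB}(X)$; this simultaneously shows that $\exists_{R_i}$ restricted to $\mathcal{COB}(X)$ agrees with the operation on $\mathcal{B}(X)$. Finally, each $\Delta_{i,k}$ lies in $\mathcal{CO}(X)$ by Definition \ref{cuvo}(3) and equals $\Delta_{i,k}^{\perp\perp}$ by Definition \ref{col}(3), so $\Delta_{i,k}\in\mathcal{COB}(X)$.

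Collecting these facts, $\mathcal{COB}(X)$ is closed under all operations of $\mathcal{B}(X)$ and contains its constants, so $\mathcal{A}_0(X)$ is a subalgebra of $\mathcal{B}(X)$ and hence, lying in the variety, is itself an $I$-dimensional cylindric ortholattice. The ortholattice and diagonal-free portions of this argument are precisely the content of the corresponding results of \cite{mcdonald} and \cite{harding3}; the only genuinely new ingredient is the observation that the diagonal constants $\Delta_{i,k}$ land in $\mathcal{COB}(X)$. I do not anticipate any real obstacle: the single point warranting care is confirming that $\sqcup$ and each $\exists_{R_i}$, when restricted to $\mathcal{COB}(X)$, coincide with the ambient operations of $\mathcal{B}(X)$ rather than producing a genuinely different (e.g.\ non-compact) join — which is exactly what conditions 2 and 5 of Definition \ref{muvo} and condition 3 of Definition \ref{cuvo} are imposed to secure.
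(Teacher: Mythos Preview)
Your proposal is correct and takes essentially the same approach as the paper: exhibit $\mathcal{COB}(X)$ as a subalgebra of the complete cylindric ortholattice $\mathcal{B}(X)$ furnished by Lemma~\ref{set representation}, checking closure under the operations and containment of the constants. The paper compresses this into a one-line citation of Lemma~\ref{set representation}, Lemma~\ref{algebra to space}, and \cite[Lemma~4.2]{mcdonald}, whereas you spell out the closure verifications explicitly; but the substance is identical, and your identification of the diagonal-constant observation as the only genuinely new ingredient over \cite{mcdonald,harding3} is exactly right.
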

\begin{proof}
   The result follows by Lemma \ref{set representation}, Lemma \ref{algebra to space}, and \cite[Lemma 4.2]{mcdonald}.  
\end{proof}
\begin{theorem}\label{topological representation}
    Every $I$-dimensional cylindric ortholattice $A$ is isomorphic to $\mathcal{A}_0(\mathcal{S}_0(A))$ under $\phi$. 
\end{theorem}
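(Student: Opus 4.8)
The plan is to show that the map $\phi\colon A\to\mathcal{A}_0(\mathcal{S}_0(A))$ given by $\phi(a)=\{x\in\mathfrak{F}(A):a\in x\}$ is a well-defined isomorphism of $I$-dimensional cylindric ortholattices. First I would check that $\phi$ lands in $\mathcal{A}_0(\mathcal{S}_0(A))$, i.e.\ that each $\phi(a)$ is a compact open biorthogonally closed subset of $\mathcal{S}_0(A)$. Openness is immediate since $\phi(a)$ is a basic open set by Definition \ref{spectrum}; biorthogonal closure is part of the content of \cite[Theorem 3.14]{mcdonald} already invoked in Lemma \ref{embedding}; and compactness follows from the standard filter-theoretic argument for UVO-spaces (as in \cite{mcdonald}): a basic open cover $\phi(a)\subseteq\bigcup_{j}\phi(a_j)$ with no finite subcover would let one build a proper filter containing $a$ but missing every $a_j$, contradicting the cover. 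By Lemma \ref{algebra to space} this argument is available since $\mathcal{S}_0(A)$ is a cylindric UVO-space, hence spectral.

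Next I would establish that $\phi$ is an injective homomorphism. This is exactly Lemma \ref{embedding}: $\phi$ is a homomorphic embedding for the full cylindric ortholattice signature — it preserves $\wedge,\vee,{}^{\perp},0,1$ and each $\exists_i$ by the cited results \cite[Theorem 3.14]{mcdonald} and \cite[Proposition 3.5]{harding3}, and it sends $\delta_{i,k}$ to $\Delta_{i,k}$ by the very definition of $\Delta_{i,k}$ in Definition \ref{spectrum}. So the only thing left is surjectivity: every $U\in\mathcal{COB}(\mathcal{S}_0(A))$ equals $\phi(a)$ for some $a\in A$.

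For surjectivity I would argue as follows. Let $U\in\mathcal{COB}(\mathcal{S}_0(A))$. Since $\{\phi(a):a\in A\}$ is a basis (Lemma \ref{algebra to space}, via Definition \ref{cuvo}(2)), write $U=\bigcup_{j\in J}\phi(a_j)$. Compactness of $U$ gives a finite subcover $U=\phi(a_{j_1})\cup\dots\cup\phi(a_{j_n})$. Now $\phi$ is a lattice embedding and in particular a join-semilattice homomorphism into $\mathcal{A}_0(\mathcal{S}_0(A))$, whose join is $\sqcup=(\,\cdot\cup\cdot\,)^{\perp\perp}$; hence $\phi(a_{j_1}\vee\dots\vee a_{j_n})=\phi(a_{j_1})\sqcup\dots\sqcup\phi(a_{j_n})=(\phi(a_{j_1})\cup\dots\cup\phi(a_{j_n}))^{\perp\perp}=U^{\perp\perp}=U$, where the last equality uses $U\in\mathcal{B}(\mathcal{S}_0(A))$. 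Setting $a=a_{j_1}\vee\dots\vee a_{j_n}$ gives $U=\phi(a)$, so $\phi$ is onto.

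The main obstacle is the compactness-based step — both in verifying that each $\phi(a)$ is compact and in invoking compactness of an arbitrary $U\in\mathcal{COB}(\mathcal{S}_0(A))$ for surjectivity — because this is precisely the point where the paper wants to stay within ZF. I would handle it by appealing to the choice-free compactness machinery already developed for UVO-spaces in \cite{mcdonald} (the key being that $\mathcal{COB}(\mathcal{S}_0(A))$ is closed under $\cap$ and ${}^{\perp}$ and that proper filters of $\mathcal{COB}(\mathcal{S}_0(A))$ are exactly the $\mathcal{COB}_{\mathcal{S}_0(A)}(x)$, which is Definition \ref{cuvo}(2) together with Definition \ref{muvo}(3), established in Lemma \ref{algebra to space}): a basic open cover of a compact open set with no finite subcover yields, via the finite-intersection property of complements inside the distributive lattice $\mathcal{CO}(\mathcal{S}_0(A))$, a proper filter witnessing the failure, and the representation of proper filters as point-neighborhood filters produces the missing point. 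Everything else is bookkeeping with the already-cited homomorphism and embedding lemmas.
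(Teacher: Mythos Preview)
Your proposal is correct and follows essentially the same approach as the paper: invoke the ortholattice isomorphism from \cite[Theorem~3.14]{mcdonald}, extend to the quantifiers via \cite{harding3}, and handle the diagonals by definition. The only difference is granularity: the paper simply cites \cite[Theorem~3.14]{mcdonald} for the entire ortholattice-level isomorphism (well-definedness, injectivity, and surjectivity $\phi[A]=\mathcal{COB}(\mathcal{S}_0(A))$ included), whereas you unpack the surjectivity step explicitly via the compactness-plus-biorthogonal-closure argument---which is exactly the argument that lives inside that cited theorem.
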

\begin{proof}
    By \cite[Theorem 3.14]{mcdonald}, we know that $\phi$ exhibits the desired isomorphism from the ortholattice reduct of $A$ to the ortholattice reduct of $\mathcal{A}_0(\mathcal{S}_0(A))$. 
    
    The proof that $\phi$ is a homomorphism for $\exists_i$ for each $i\in I$ runs the same as \cite[Proposition 5.3]{harding3}. Therefore, the result is achieved by noting that $\Delta_{i,k}=\phi(\delta_{i,k})$ by the definition of $\Delta_{i,k}$. Lastly, $\phi[A]=\mathcal{COB}(\mathcal{S}_0(A))$. 
\end{proof}

If $\langle X; R_X,Q_X,\tau_X\rangle$ and $\langle Y; R_Y,Q_Y, \tau_Y\rangle$ are topological spaces with binary relations $R_X,Q_X\subseteq X\times X$ and $R_Y,Q_Y\subseteq Y\times Y$, then $f\colon X\to Y$ is a \emph{relational homeomorphism} if $f$ is a homeomorphism and a relational isomorphism i.e., for all $x,y\in X$, we have $xR_Xy\Leftrightarrow f(x)R_Yf(y)$ and $xQ_Xy\Leftrightarrow f(x)Q_Yf(y)$. 
\begin{theorem}\label{algebraic realization}
    Every $I$-dimensional cylindric UVO-space $X$ is relationally homeomorphic to $\mathcal{S}_0(\mathcal{A}_0(X))$. 
\end{theorem}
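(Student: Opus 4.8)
The plan is to follow the standard template for choice-free spatial dualities as developed in \cite{mcdonald} and \cite{harding3}, namely to exhibit the candidate map $\varepsilon_X\colon X\to\mathcal{S}_0(\mathcal{A}_0(X))$ sending a point $x\in X$ to $\mathcal{COB}_X(x)=\{U\in\mathcal{COB}(X):x\in U\}$, and to verify it is a relational homeomorphism. First I would check that $\varepsilon_X$ is well-defined: for each $x\in X$ the set $\mathcal{COB}_X(x)$ is a proper filter of the ortholattice $\mathcal{A}_0(X)=\langle\mathcal{COB}(X);\cap,\sqcup,{}^{\perp},\emptyset,X,(\exists_{R_i})_i,(\Delta_{i,k})_{i,k}\rangle$ (properness because $x\notin\emptyset$), hence an element of $\mathfrak{F}(\mathcal{A}_0(X))$, the underlying set of $\mathcal{S}_0(\mathcal{A}_0(X))$. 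Bijectivity is where the UVO-space axioms do the work: injectivity follows from $T_0$-ness together with Lemma~\ref{muvo lemma}(1)--(2) (distinct points are separated by a $\mathcal{COB}$-set), and surjectivity is precisely Definition~\ref{muvo}(\ref{uvo proper filter}), which says every proper filter of $\mathcal{COB}(X)$ has the form $\mathcal{COB}_X(x)$. That $\varepsilon_X$ is a homeomorphism onto its image (in fact onto $\mathcal{S}_0(\mathcal{A}_0(X))$) is already handled at the level of the UVO-space reducts by \cite[Theorem~5.11 / Corollary~5.14]{mcdonald}, since basic opens of $\mathcal{S}_0(\mathcal{A}_0(X))$ are exactly the sets $\phi(U)=\{F\in\mathfrak{F}(\mathcal{A}_0(X)):U\in F\}$ and $\varepsilon_X^{-1}[\phi(U)]=U$.

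The remaining content is to verify that $\varepsilon_X$ is a relational isomorphism for all the relational structure: it must respect $\perp$, each $R_i$, and carry each $\Delta_{i,k}$ to the corresponding distinguished subset of $\mathcal{S}_0(\mathcal{A}_0(X))$. For $\perp$ and for each $R_i$ this is handled reductively: $\perp$ is treated in \cite[Section~4]{mcdonald} and the clause for $R_i$ is exactly the argument in \cite[Proposition~5.3 / Theorem~5.11]{harding3}, using Definition~\ref{muvo}(5)--(6) to move between $x R_i y$ in $X$ and $\exists_{R_i}[\mathcal{COB}_X(x)]\subseteq\mathcal{COB}_X(y)$ in $\mathcal{S}_0(\mathcal{A}_0(X))$; Lemma~\ref{muvo lemma}(3) gives one direction, and axiom 6 of Definition~\ref{muvo} (the ``$x\overline{R}y$ implies separation by an $\exists_R$-image'' condition) gives the converse. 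Since the commutation of the $R_i$ is a first-order property preserved under relational isomorphism (or, alternatively, is automatic once each $R_i$ is matched), nothing new is needed for Definition~\ref{col}(2). Finally, for the diagonals: by construction of $\mathcal{S}_0$ (Definition~\ref{spectrum}) the distinguished subset of $\mathcal{S}_0(\mathcal{A}_0(X))$ indexed by $(i,k)$ is $\phi(\Delta_{i,k})=\{F\in\mathfrak{F}(\mathcal{A}_0(X)):\Delta_{i,k}\in F\}$, so I must show $\varepsilon_X[\Delta_{i,k}]$ equals this set, equivalently that for $x\in X$ we have $x\in\Delta_{i,k}\iff\Delta_{i,k}\in\mathcal{COB}_X(x)$ — which is immediate from the definition of $\mathcal{COB}_X(x)$, using that $\Delta_{i,k}\in\mathcal{COB}(X)$ (Definition~\ref{cuvo}(3) together with $\Delta_{i,k}=\Delta_{i,k}^{\perp\perp}$).

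I expect the main obstacle to be the $R_i$-clause, specifically the direction showing that $\mathcal{COB}_X(x) \mathrel{R_i} \mathcal{COB}_X(y)$ in $\mathcal{S}_0(\mathcal{A}_0(X))$ implies $x R_i y$ in $X$. Unwinding Definition~\ref{spectrum}(3), the hypothesis says $\exists_{R_i}[\mathcal{COB}_X(x)]\subseteq\mathcal{COB}_X(y)$, i.e. for every $U\in\mathcal{COB}(X)$ with $x\in U$ one has $y\in\exists_{R_i}U=R_i[U]^{\perp\perp}$; one must promote this to $y\in R_i[\{x\}]^{\perp\perp}$ and then, since $R_i[\{x\}]$ need not itself be biorthogonally closed, descend to $x R_i y$. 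The contrapositive route is cleaner: if $x\overline{R_i}y$, then Definition~\ref{muvo}(6) supplies $V\in\mathcal{COB}_X(x)$ with $y\notin\exists_{R_i}V$, which exhibits an element of $\exists_{R_i}[\mathcal{COB}_X(x)]$ not containing $y$, contradicting $\exists_{R_i}[\mathcal{COB}_X(x)]\subseteq\mathcal{COB}_X(y)$ — so this reduces entirely to axiom~6, and the genuine work was already absorbed into the axiomatization. The rest of the proof is bookkeeping: assembling the reductive facts from \cite{mcdonald} and \cite{harding3} and checking the diagonals, none of which requires any choice principle since $\mathcal{S}_0$ was built to be choice-free. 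Accordingly I would write the proof as: ``Define $\varepsilon_X(x)=\mathcal{COB}_X(x)$; by \cite{mcdonald} and \cite{harding3} this is a homeomorphism of the monadic UVO-space reducts respecting $\perp$ and each $R_i$; and $\varepsilon_X[\Delta_{i,k}]=\phi(\Delta_{i,k})$ by Definition~\ref{cuvo}(3). Hence $\varepsilon_X$ is a relational homeomorphism.''
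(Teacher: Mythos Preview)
Your proposal is correct and follows essentially the same route as the paper: define $\psi(x)=\mathcal{COB}_X(x)$, invoke \cite{mcdonald} for the UVO-space reduct (well-definedness, bijectivity via $T_0$ and Definition~\ref{muvo}(\ref{uvo proper filter}), homeomorphism, $\perp$-isomorphism), and then handle each $R_i$ using Lemma~\ref{muvo lemma}(3) for the forward direction and the contrapositive via Definition~\ref{muvo}(6) for the converse. Your treatment is in fact slightly more complete than the paper's, which omits the easy verification that $\psi[\Delta_{i,k}]=\phi(\Delta_{i,k})$.
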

\begin{proof}
 By \cite[Theorem 4.4]{mcdonald}, it follows that the continuous function: \[\psi\colon X\to\mathcal{S}_0(\mathcal{A}_0(X));\hspace{.2cm}\psi(x)=\{U\in\mathcal{COB}(X):x\in U\}\] exhibits a relational homeomorphism from the UVO-space reduct of $X$ to the UVO-space reduct of $\mathcal{S}_0(\mathcal{A}_0(X))$.
It is an easy exercise to verify that $\psi(x)$ is a proper filter in $\mathcal{A}_0(X)$ so that $\psi$ is indeed a well-defined function. Injectivity follows from Lemma \ref{muvo lemma}(2) and surjectivity follows from Definition \ref{muvo}(3).

The result that $\psi$ exhibits a relational isomorphism from the monadic UVO-space reduct $\langle X;R_i\rangle$ of $X$ to the monadic UVO-space reduct $\langle\mathfrak{F}(\mathcal{COB}(X));R_i\rangle$ of $\mathcal{S}_0(\mathcal{A}_0(X))$ for each $i\in I$ runs as follows. Assume that $xR_iy$ for any $i\in I$. Then by Lemma \ref{muvo lemma}, we have $\exists_{R_i}[\mathcal{COB}_X(x)]\subseteq\mathcal{COB}_X(y)$ so 
$\exists_{R_i}[\{U\in\mathcal{COB}(X):x\in U\}]\subseteq\{U\in\mathcal{COB}(X):y\in U\}$ by the definition of $\mathcal{COB}_X(x)$ and $\mathcal{COB}_X(y)$. Then by the definition of $\psi$, we have $\exists_{R_i}[\psi(x)]\subseteq \psi(y)$ and hence $\psi(x)R_i\psi(y)$ given the fact that $\psi(x)$ and $\psi(y)$ are proper filters in $\mathcal{A}_0(X)$ and the definition of $R_i$. For the converse implication, assume for the sake of contraposition that $x\overline{R}_iy$. Then by condition 6 of Definition \ref{muvo}, there exists some $U\in\mathcal{COB}(X)$ such that $U=\exists_{R_i}V$ for some $V\in\mathcal{COB}_X(x)$ with $y\not\in U$. Then $U\in\exists_{R_i}[\mathcal{COB}_X(x)]$ but $U\not\in\mathcal{COB}_X(y)$ and hence $\exists_{R_i}[\mathcal{COB}_X(x)]\not\subseteq\mathcal{COB}_X(y)$ so $\exists_{R_i}[\psi(x)]\not\subseteq\psi(y)$ and hence $\psi(x)\overline{R}\psi(y)$. This completes the proof. 
  \end{proof}

\section{Duality}
In this section, we introduce suitable spectral frame morphisms to accompany the cylindric UVO-spaces and then prove that the resulting category is dually equivalent to the category of cylindric ortholattices and homomorphisms. 
\begin{definition}
    Let $A$ and $A'$ be $I$-dimensional cylindric ortholattices. Then a function $h\colon A\to A'$ is a \emph{homomorphism} provided: 
    \begin{enumerate}
        \item $h$ is a bounded lattice homomorphism; 
        \item $h(a^{\perp})=h(a)^{\perp}$; 
        \item $h(\exists_{i}a)=\exists_{i}h(a)$; 
        \item $h(\delta_{i,k})=\delta'_{i,k}$. 
    \end{enumerate}
\end{definition}

By $\mathcal{COL}$ we denote the category of $I$-dimensional cylindric ortholattices and their homomorphisms.

Recall that if $X$ and $Y$ are spectral spaces, a \emph{spectral map} is a function $f\colon X\to Y$ such that $f^{-1}[U]\in\mathcal{CO}(X)$ for all $U\in\mathcal{CO}(Y)$.

The following combines the spectral frame morphisms studied in \cite{mcdonald} with the continuous frame morphisms studied in \cite{harding3}. 
\begin{definition}\label{cuvo map}
    Let $X$ and $X'$ be $I$-dimensional cylindric UVO-spaces. Then a map $f\colon X\to X'$ is an \emph{I-dimensional cylindric UVO-map} if for all $i,k\in I$:   
    \begin{enumerate}
    \item $f$ is a spectral map;
      \item if $x\not\perp y$, then $f(x)\not\perp f(y)$;
      \item if $z \not \perp f(y)$,
  there exists $x \in X$ such that $x \not \perp y$ and $z \leqslant f(x)$; 
   \item $R_{i}[f^{-1}[U]]=f^{-1}[R_{i}[U]]$ for all $U\in\mathcal{COB}(X)$; 
   \item $f^{-1}[\Delta'_{i,k}]=\Delta_{i,k}$. 
  \end{enumerate}
\end{definition}

We denote by $\mathcal{CUVO}$ the category of $I$-dimensional cylindric UVO-spaces and $I$-dimensional cylindric UVO-maps. 

\begin{proposition}\label{prop1 spectral map}
 If $X$ and $X'$ are UVO-spaces and $f\colon X\to X'$ is a UVO-map, then $f^{-1}[U]\in\mathcal{COB}(X)$ for each $U\in\mathcal{COB}(X')$.  
 \end{proposition}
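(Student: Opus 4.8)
The plan is to reduce the statement to facts already available for UVO-maps between (non-monadic, non-cylindric) UVO-spaces, since the cylindric and monadic structure plays no role here: the conclusion $f^{-1}[U]\in\mathcal{COB}(X)$ concerns only the orthoframe-plus-topology reducts. So the first move is to observe that an $I$-dimensional cylindric UVO-map $f\colon X\to X'$ restricts to a UVO-map between the UVO-space reducts of $X$ and $X'$ (conditions 1--3 of Definition \ref{cuvo map} are exactly the conditions defining a UVO-map in \cite{mcdonald}, and conditions 2 and 3 of Definition \ref{cuvo} guarantee the reducts are genuine UVO-spaces). Then $f^{-1}[U]\in\mathcal{COB}(X)$ for $U\in\mathcal{COB}(X')$ should follow from the corresponding result in \cite{mcdonald}; if one wants a self-contained argument, proceed as in the next paragraph.

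Self-contained argument: let $U\in\mathcal{COB}(X')$. Since $f$ is a spectral map (Definition \ref{cuvo map}(1)) and $\mathcal{COB}(X')\subseteq\mathcal{CO}(X')$, we get $f^{-1}[U]\in\mathcal{CO}(X)$. It remains to show $f^{-1}[U]$ is biorthogonally closed, i.e.\ $f^{-1}[U]=f^{-1}[U]^{\perp\perp}$. The inclusion $f^{-1}[U]\subseteq f^{-1}[U]^{\perp\perp}$ holds for any subset of an orthoframe. For the reverse, suppose $x\notin f^{-1}[U]$, i.e.\ $f(x)\notin U$. Since $U=U^{\perp\perp}$ is biorthogonally closed, $f(x)\notin U^{\perp\perp}$ means there exists $z\in U^{\perp}$ with $z\not\perp f(x)$. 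By Definition \ref{cuvo map}(3), there is some $x'\in X$ with $x'\not\perp x$ and $z\leqslant f(x')$. Now one shows $x'\in f^{-1}[U]^{\perp}$: for any $w\in f^{-1}[U]$ we have $f(w)\in U$, and since $z\in U^{\perp}$ we get $z\perp f(w)$; because $z\leqslant f(x')$ in the specialization order and $f^{-1}[U]$-membership of $w$ forces $f(w)\in U\in\mathcal{COB}(X')$ which is biorthogonally closed, the orthocomplement $U^{\perp}$ is a downset in the specialization order (a standard fact about biorthogonally closed sets in a UVO-space, cf.\ \cite{mcdonald} and Lemma \ref{muvo lemma}), hence $f(x')\perp f(w)$, i.e.\ $x'\perp w$ using Definition \ref{cuvo map}(2) contrapositively. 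Thus $x'\in f^{-1}[U]^{\perp}$, and since $x'\not\perp x$ we conclude $x\notin f^{-1}[U]^{\perp\perp}$. This gives $f^{-1}[U]^{\perp\perp}\subseteq f^{-1}[U]$, completing the proof.

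The main obstacle is the verification that $U^{\perp}$ behaves well with respect to the specialization order so that the step ``$z\leqslant f(x')$ and $z\perp f(w)$ imply $f(x')\perp f(w)$'' is justified; this is precisely where one needs that the biorthogonally closed sets of a UVO-space interact correctly with the topology, and it is the content of the corresponding lemma in \cite{mcdonald}. Everything else is a routine unwinding of the definitions. For this reason the cleanest presentation is simply to cite \cite{mcdonald}: since the UVO-space reducts of $X$ and $X'$ are UVO-spaces and $f$ is a UVO-map between them by Definition \ref{cuvo map}(1)--(3), the claim is exactly \cite[Proposition (on spectral maps for UVO-spaces)]{mcdonald}, so $f^{-1}[U]\in\mathcal{COB}(X)$ for every $U\in\mathcal{COB}(X')$.
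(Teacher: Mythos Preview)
Your approach is essentially correct and aligned with the paper's: both reduce to the fact that a UVO-map pulls back biorthogonally closed sets to biorthogonally closed sets, and then invoke spectrality for the compact-open part. The paper's proof simply cites Bimb\'o \cite{bimbo} for the chain $f^{-1}[U]=f^{-1}[U^{\perp\perp}]=f^{-1}[U^{\perp}]^{\perp}=f^{-1}[U]^{\perp\perp}$, valid for any $U\in\mathcal{B}(X')$, which is the algebraic counterpart of your element-chasing argument.

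Two small points. First, the proposition is already stated for plain UVO-spaces and UVO-maps, so your opening paragraph about passing to UVO-reducts of cylindric structures is unnecessary. Second, in your self-contained argument you write that $U^{\perp}$ is a \emph{downset} in the specialization order; it is in fact an \emph{upset}, since $U^{\perp}\in\mathcal{COB}(X')$ by closure of $\mathcal{COB}(X')$ under $^{\perp}$ (Definition \ref{muvo}(2)), and open sets are upward closed in the specialization order. With that correction the step ``$z\leqslant f(x')$ and $z\in U^{\perp}$ imply $f(x')\in U^{\perp}$, hence $f(x')\perp f(w)$'' goes through, and the rest of your argument is fine.
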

 \begin{proof}
     Bimb\'o \cite{bimbo} showed that analogously defined continuous frame morphisms satisfying conditions 2 and 3 of Definition \ref{cuvo map} satisfy: \[f^{-1}[U]=f^{-1}[U^{\perp\perp}]=f^{-1}[U^{\perp}]^{\perp}=f^{-1}[U]^{\perp\perp}\] for any $U\in\mathcal{B}(X')$. This, along with our hypothesis that $f$ is spectral, implies that $f^{-1}[U]\in\mathcal{COB}(X)$ whenever $U\in\mathcal{COB}(X')$. 
 \end{proof}
\begin{lemma}\label{continuous to homomorphism}
    Let $X$ and $X'$ be $I$-dimensional cylindric UVO-spaces and let $f\colon X\to X'$ be an $I$-dimensional cylindric UVO-map. Then the operation $\mathcal{A}_1(f)\colon \mathcal{A}_0(X')\to\mathcal{A}_0(X)$ defined by $\mathcal{A}_1(f):=f^{-1}$ is an $I$-dimensional cylindric ortholattice homomorphism. Therefore $\mathcal{A}_F=\langle\mathcal{A}_0,\mathcal{A}_1\rangle\colon\mathcal{CUVO}\to\mathcal{COL}$ determines a contravariant functor.  
\end{lemma}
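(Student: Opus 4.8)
The plan is to verify that $\mathcal{A}_1(f) = f^{-1}$ preserves each piece of structure on $I$-dimensional cylindric ortholattices, using the conditions in Definition \ref{cuvo map}, and then to check functoriality, which is essentially a formal matter about preimages. First I would note that $f^{-1}$ is a well-defined map from $\mathcal{COB}(X')$ to $\mathcal{COB}(X)$ by Proposition \ref{prop1 spectral map}, since every cylindric UVO-map is in particular a UVO-map on the underlying (diagonal-free) UVO-space reducts. For the ortholattice reduct, the fact that $f^{-1}$ is an ortholattice homomorphism from $\mathcal{A}_0(X')$ to $\mathcal{A}_0(X)$ is exactly the content of the corresponding result in \cite{mcdonald} for UVO-maps; concretely, $f^{-1}$ preserves $\emptyset$, $X'$, $\cap$ on the nose, preserves $^{\perp}$ by the identity $f^{-1}[U^{\perp}] = f^{-1}[U]^{\perp}$ recalled in the proof of Proposition \ref{prop1 spectral map}, and preserves $\sqcup$ because $f^{-1}[(U\cup V)^{\perp\perp}] = f^{-1}[U\cup V]^{\perp\perp} = (f^{-1}[U]\cup f^{-1}[V])^{\perp\perp}$ using the same biorthogonal-commutation identity together with $f^{-1}[U\cup V] = f^{-1}[U]\cup f^{-1}[V]$.

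Next I would handle the quantifiers. For each $i\in I$ and each $U\in\mathcal{COB}(X')$, condition 4 of Definition \ref{cuvo map} gives $R_i[f^{-1}[U]] = f^{-1}[R_i[U]]$. Applying $^{\perp\perp}$ to both sides and using the commutation identity $f^{-1}[W^{\perp\perp}] = f^{-1}[W]^{\perp\perp}$ (valid for any $W\in\mathcal{B}(X')$, and noting $R_i[U]\in\mathcal{COB}(X')$ by Definition \ref{muvo}(5) so $R_i[U]\in\mathcal{B}(X')$), we obtain
\[
\exists_{R_i} f^{-1}[U] = R_i[f^{-1}[U]]^{\perp\perp} = f^{-1}[R_i[U]]^{\perp\perp} = f^{-1}[R_i[U]^{\perp\perp}] = f^{-1}[\exists_{R_i} U],
\]
so $\mathcal{A}_1(f)$ is a homomorphism for each $\exists_i$. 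For the diagonals, condition 5 of Definition \ref{cuvo map} states precisely $f^{-1}[\Delta'_{i,k}] = \Delta_{i,k}$, which is the required equation $\mathcal{A}_1(f)(\Delta'_{i,k}) = \Delta_{i,k}$. Combined with Lemma \ref{space to algebra}, which guarantees both $\mathcal{A}_0(X)$ and $\mathcal{A}_0(X')$ are $I$-dimensional cylindric ortholattices, this shows $\mathcal{A}_1(f)$ is an $I$-dimensional cylindric ortholattice homomorphism.

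Finally, for functoriality: $\mathcal{A}_0$ sends an object $X$ to $\mathcal{A}_0(X)$, and $\mathcal{A}_1$ sends $f\colon X\to X'$ to $f^{-1}\colon \mathcal{A}_0(X')\to\mathcal{A}_0(X)$; one checks $\mathcal{A}_1(\mathrm{id}_X) = (\mathrm{id}_X)^{-1} = \mathrm{id}_{\mathcal{A}_0(X)}$ and, for composable UVO-maps $f\colon X\to X'$, $g\colon X'\to X''$, that $\mathcal{A}_1(g\circ f) = (g\circ f)^{-1} = f^{-1}\circ g^{-1} = \mathcal{A}_1(f)\circ\mathcal{A}_1(g)$, which reverses arrows as required for a contravariant functor. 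I would also remark that composites of cylindric UVO-maps are again cylindric UVO-maps, which follows from the stability of the diagonal-free UVO-map conditions under composition (as in \cite{mcdonald}) together with the observations that conditions 4 and 5 of Definition \ref{cuvo map} are preserved under composition by straightforward preimage manipulations. The main obstacle is the quantifier step: one must be careful that the set $R_i[U]$ to which the biorthogonal-commutation identity is applied is genuinely biorthogonally closed (guaranteed here by Definition \ref{muvo}(5)), and that condition 4 is stated for all $U\in\mathcal{COB}(X)$ so that it can legitimately be applied to the preimage algebra; everything else is routine bookkeeping with preimages.
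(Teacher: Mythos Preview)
Your proposal is correct and follows essentially the same approach as the paper: cite \cite{mcdonald} for the ortholattice-reduct part, use condition~4 of Definition~\ref{cuvo map} together with $R_i[U]\in\mathcal{B}(X')$ (from Definition~\ref{muvo}(5)) to handle the quantifiers, and read off the diagonal condition directly from condition~5. Your quantifier computation is phrased slightly differently---you apply $^{\perp\perp}$ to both sides and invoke the commutation identity, whereas the paper simply drops the $^{\perp\perp}$ on each side since both $R_i[U]$ and $R_i[f^{-1}[U]]$ are already biorthogonally closed---but the content is identical; you are also more explicit than the paper in spelling out the identity/composition checks for functoriality.
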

\begin{proof}
    Let $X$ and $X'$ be monadic orthospaces. By \cite[Theorem 5.12]{mcdonald}, it follows that $\mathcal{A}_0(f)\colon\mathcal{A}_0(X')\to\mathcal{A}_0(X)$ is an ortholattice homomorphism whenever $f\colon X\to X'$ is a UVO-map. We now verify that $\mathcal{A}_1(f)$ maps $\exists_i$ homomorphically to $\exists_{R_i}$. The proof runs similarly to \cite[Lemma 5.7]{harding3}. 
    \begin{align*}
        \mathcal{A}_1(f)[\exists_{R_{i}}U]&=f^{-1}[\exists_{R_{i}}U]=f^{-1}[R_{i}[U]^{\perp\perp}]\tag{definition of $\mathcal{A}_1$ and $\exists_{R_{i}}$}
        \\&=f^{-1}[R_{i}[U]]=R_{i}[f^{-1}[U]]\tag{$R_{i}[U]\in\mathcal{B}(X)$, Def. \ref{cuvo map}(4)}
        \\&=R_{i}[f^{-1}[U]]^{\perp\perp}=\exists_{R_{i}} f^{-1}[U]\tag{$R_{i}[f^{-1}[U]]\in\mathcal{B}(X)$}\\&=\exists_{R_{i}}\mathcal{A}_1(f)[U]\tag{by the definition of $\mathcal{A}_1$}
    \end{align*}

Finally, we note that the result that $\mathcal{A}_F$ is a homomorphism for $\Delta_{i,k}$ follows directly from the definition of $\mathcal{A}_1$ and Definition \ref{cuvo map}(5) as follows: \[\mathcal{A}_1(f)[\Delta'_{i,k}]=f^{-1}[\Delta'_{i,k}]=\Delta_{i,k}\]

    Hence $\mathcal{A}_0(f)\colon\mathcal{A}_0(X')\to\mathcal{A}_0(X)$ is an $I$-dimensional cylindric ortholattice homomorphism whenever $f\colon X\to X'$ is an $I$-dimensional cylindric UVO-map. Moreover, by Lemma \ref{space to algebra} it follows that $\mathcal{A}_F=\langle\mathcal{A}_0,\mathcal{A}_1\rangle\colon\mathcal{CUVO}\to\mathcal{COL}$ is a contravariant functor. 
\end{proof}

Conversely to Lemma \ref{continuous to homomorphism}, we now show that every homomorphism between cylindric ortholattices induces a cylindric UVO-map between their duals. 
\begin{lemma}\label{functors}
    Let $A$ and $A'$ be $I$-dimensional cylindric ortholattices and let $h\colon A\to A'$ be an $I$-dimensional cylindric ortholattice homomorphism. Then the map $\mathcal{S}_1(h)\colon\mathcal{S}_0(A')\to\mathcal{S}_0(A)$ defined by $\mathcal{S}_1(h):=h^{-1}$ is an $I$-dimensional cylindric UVO-map. Therefore $\mathcal{S}_F=\langle\mathcal{S}_0,\mathcal{S}_1\rangle\colon\mathcal{COL}\to\mathcal{CUVO}$ determines a contravariant functor.  
\end{lemma}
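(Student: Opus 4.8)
The plan is to verify the five clauses of Definition~\ref{cuvo map} for $f := \mathcal{S}_1(h) = h^{-1}$ and then check functoriality. First I would record that $f$ is well defined: for $x \in \mathfrak{F}(A')$ the set $h^{-1}[x] = \{a \in A : h(a) \in x\}$ is a non-empty proper filter of $A$ --- upward closure and closure under $\wedge$ because $h$ preserves $\leq$ and $\wedge$, non-emptiness because $h(1) = 1 \in x$, properness because $h(0) = 0 \notin x$. The computation that drives the rest is, for each $a \in A$,
\[
f^{-1}[\phi_A(a)] = \{x \in \mathfrak{F}(A') : a \in h^{-1}[x]\} = \{x \in \mathfrak{F}(A') : h(a) \in x\} = \phi_{A'}(h(a)).
\]
By Theorem~\ref{topological representation} and Lemma~\ref{algebra to space}, the sets $\phi_A(a)$ (resp.\ $\phi_{A'}(b)$) are precisely the members of $\mathcal{COB}(\mathcal{S}_0(A))$ (resp.\ $\mathcal{COB}(\mathcal{S}_0(A'))$) and form a basis, so this identity shows that $f^{-1}$ sends $\mathcal{COB}(\mathcal{S}_0(A))$ into $\mathcal{COB}(\mathcal{S}_0(A'))$ and hence $\mathcal{CO}(\mathcal{S}_0(A))$ into $\mathcal{CO}(\mathcal{S}_0(A'))$; thus $f$ is spectral, giving clause~1. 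Clauses~2 and~3 involve only the orthogonality relation and the ortholattice reducts, so they follow from the McDonald--Yamamoto ortholattice duality \cite{mcdonald} applied to $h$, exactly as in the monadic case treated in \cite{harding3}.

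For clause~4, fix $U = \phi_A(a) \in \mathcal{COB}(\mathcal{S}_0(A))$. In any monadic UVO-space $R_i[V] \in \mathcal{COB}(X)$ whenever $V \in \mathcal{COB}(X)$ (Definition~\ref{muvo}(5)), so $R_i[V]$ is biorthogonally closed and $R_i[V] = R_i[V]^{\perp\perp} = \exists_{R_i} V$; combined with the fact (Theorem~\ref{topological representation}) that $\phi_A$ and $\phi_{A'}$ are homomorphisms for the quantifiers, this yields $R_i[\phi_A(a)] = \phi_A(\exists_i a)$ and $R_i[\phi_{A'}(b)] = \phi_{A'}(\exists_i b)$. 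Hence
\begin{align*}
R_i\bigl[f^{-1}[\phi_A(a)]\bigr] &= R_i[\phi_{A'}(h(a))] = \phi_{A'}(\exists_i h(a)) = \phi_{A'}(h(\exists_i a))\\
&= f^{-1}[\phi_A(\exists_i a)] = f^{-1}\bigl[R_i[\phi_A(a)]\bigr],
\end{align*}
where $\exists_i h(a) = h(\exists_i a)$ since $h$ is a cylindric ortholattice homomorphism. Clause~5 is then immediate from $h(\delta_{i,k}) = \delta'_{i,k}$: $f^{-1}[\Delta^{\mathcal{S}_0(A)}_{i,k}] = f^{-1}[\phi_A(\delta_{i,k})] = \phi_{A'}(h(\delta_{i,k})) = \phi_{A'}(\delta'_{i,k}) = \Delta^{\mathcal{S}_0(A')}_{i,k}$.

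For functoriality, $\mathcal{S}_1(\mathrm{id}_A) = \mathrm{id}_{\mathcal{S}_0(A)}$ and $\mathcal{S}_1(h \circ g) = (h \circ g)^{-1} = g^{-1} \circ h^{-1} = \mathcal{S}_1(g) \circ \mathcal{S}_1(h)$, so together with Lemma~\ref{algebra to space} (which gives that $\mathcal{S}_0$ sends each object of $\mathcal{COL}$ to an object of $\mathcal{CUVO}$) we conclude that $\mathcal{S}_F = \langle \mathcal{S}_0, \mathcal{S}_1 \rangle \colon \mathcal{COL} \to \mathcal{CUVO}$ is a contravariant functor. I expect the only step needing genuine care to be clause~4: one has to observe that the biorthogonal closure built into $\exists_{R_i}$ acts trivially on images of basic opens, so that $R_i[\phi(a)]$ equals $\phi(\exists_i a)$ on the nose, and one must track which space each $R_i$ and each family $(\Delta_{i,k})$ lives on, since $\mathcal{S}_1$ reverses arrows; everything else is bookkeeping on top of the cited ortholattice and monadic dualities.
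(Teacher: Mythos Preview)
Your proposal is correct and follows essentially the same approach as the paper's proof: both invoke the McDonald--Yamamoto ortholattice duality for clauses~1--3, establish clause~4 via the fact that $\phi$ carries $\exists_i$ to $\exists_{R_i}$ together with $h(\exists_i a)=\exists_i h(a)$, and compute clause~5 directly from $h(\delta_{i,k})=\delta'_{i,k}$. Your version is in fact more explicit---you spell out well-definedness of $h^{-1}$ as a map on proper filters, the key identity $f^{-1}[\phi_A(a)]=\phi_{A'}(h(a))$, and the functoriality axioms---whereas the paper leaves these implicit in its citations to \cite{mcdonald} and \cite{harding3}.
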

\begin{proof}
    Let $A$ and $A'$ be monadic ortholattices. By \cite[Theorem 5.12]{mcdonald}, it follows that $\mathcal{S}_1(h)\colon\mathcal{S}_0(A')\to\mathcal{S}_0(A)$ is a UVO-map whenever $h\colon A\to A'$ is an ortholattice homomorphism. Since the canonical representation map $\phi\colon A\to\mathcal{A}_0(\mathcal{S}_0(A))$ is a homomorphism for $\exists$, i.e., $\exists$ is homomorphically mapped to $\exists_R$, then by the definition of $\exists_R$ and our hypothesis that $h$ is a monadic ortholattice homomorphism, the result follows analogously to that of \cite[Lemma 5.7]{harding3}: 
    \begin{align*}
        R_{i}[\mathcal{S}_1(h)^{-1}[U]]&=R_{i}[(h^{-1})^{-1}[\phi(a)]]\tag{by the definition of $\mathcal{S}_1$}
        \\&=(h^{-1})^{-1}[R_{i}[\phi(a)]]\tag{Def \ref{cuvo map}(4), $(h^{-1})^{-1}$ a UVO-map}
        \\&=\mathcal{S}_1(h)^{-1}[R_{i}[U]]\tag{by the definition of $\mathcal{S}_1$}
    \end{align*}
  Therefore $\mathcal{S}_1(h)\colon\mathcal{S}_0(A')\to\mathcal{S}_0(A)$ is a monadic UVO-map whenever $h\colon A\to A'$ is a monadic ortholattice homomorphism.
  
  We now verify that $\mathcal{S}_1(h)^{-1}[\Delta_{i,k}]=\Delta'_{i,k}$. The result follows from the fact that $h$ is a homomorphism for diagonal elements and some general principles: 
  \begin{align*}
      \mathcal{S}_1(h)^{-1}[\Delta_{i,k}]&=(h^{-1})^{-1}[\Delta_{i,k}]=(h^{-1})^{-1}[\phi(\delta_{i,k})]\tag{def of $\mathcal{S}_1$ and $\Delta_{i,k}$}
      \\&=\{x\in\mathfrak{F}(A'): h^{-1}[x]\in\phi(\delta_{i,k})\}\tag{definition of $(h^{-1})^{-1}$}
      \\&=\{x\in\mathfrak{F}(A'):\delta_{i,k}\in h^{-1}[x]\}\tag{definition of $\phi$}
      \\&=\{x\in\mathfrak{F}(A'):h(\delta_{i,k})\in x\}\tag{definition of $h^{-1}$}
       \\&=\{x\in\mathfrak{F}(A'):\delta'_{i,k}\in x\}\tag{$h$ being a homomorphism}
       \\&=\phi(\delta'_{i,k})=\Delta'_{i,k}\tag{definition of $\phi$ and $\Delta'_{i,k}$}
  \end{align*}
Therefore $\mathcal{S}_1(h)\colon\mathcal{S}_0(A')\to\mathcal{S}_0(A)$ is an $I$-dimensional cylindric UVO-map whenever $h\colon A\to A'$ is a homomorphism of $I$-dimensional cylindric ortholattices. 
  
  Finally, by Lemma \ref{algebra to space}, it follows that $\mathcal{S}_F=\langle\mathcal{S}_0,\mathcal{S}_1\rangle\colon\mathcal{COL}\to\mathcal{CUVO}$ is a contravariant functor.
\end{proof}


We now arrive at the main result of this paper. 
\begin{theorem}
    The contravariant functors $\mathcal{A}_F=\langle\mathcal{A}_0,\mathcal{A}_1\rangle\colon\mathcal{CUVO}\to\mathcal{COL}$ and $\mathcal{S}_F=\langle\mathcal{S}_0,\mathcal{S}_1\rangle\colon\mathcal{COL}\to\mathcal{CUVO}$ constitute a dual equivalence between the categories $\mathcal{COL}$ and $\mathcal{CUVO}$.  
\end{theorem}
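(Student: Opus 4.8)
The plan is to establish the dual equivalence in the standard way for Stone-type dualities, namely by exhibiting natural isomorphisms $\eta\colon 1_{\mathcal{COL}}\Rightarrow \mathcal{A}_F\circ\mathcal{S}_F$ and $\varepsilon\colon 1_{\mathcal{CUVO}}\Rightarrow \mathcal{S}_F\circ\mathcal{A}_F$, using the object-level isomorphisms already built in Theorem \ref{topological representation} and Theorem \ref{algebraic realization}. Concretely, for each $I$-dimensional cylindric ortholattice $A$, set $\eta_A:=\phi\colon A\to\mathcal{A}_0(\mathcal{S}_0(A))$, which is a cylindric ortholattice isomorphism by Theorem \ref{topological representation}; and for each $I$-dimensional cylindric UVO-space $X$, set $\varepsilon_X:=\psi\colon X\to\mathcal{S}_0(\mathcal{A}_0(X))$, $\psi(x)=\{U\in\mathcal{COB}(X):x\in U\}$, which is a relational homeomorphism by Theorem \ref{algebraic realization}. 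Since $\mathcal{A}_F$ and $\mathcal{S}_F$ are contravariant functors (Lemma \ref{continuous to homomorphism} and Lemma \ref{functors}), it remains only to check the naturality squares and that the components are morphisms in the respective categories (a relational homeomorphism is in particular a cylindric UVO-map, and a cylindric ortholattice isomorphism is in particular a homomorphism).

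First I would verify naturality of $\eta$. Given a homomorphism $h\colon A\to A'$, one must show $\mathcal{A}_1(\mathcal{S}_1(h))\circ\eta_A=\eta_{A'}\circ h$, i.e., for all $a\in A$, $(h^{-1})^{-1}[\phi_A(a)]=\phi_{A'}(h(a))$. Unwinding definitions, the left side is $\{x\in\mathfrak{F}(A'):h^{-1}[x]\in\phi_A(a)\}=\{x\in\mathfrak{F}(A'):a\in h^{-1}[x]\}=\{x\in\mathfrak{F}(A'):h(a)\in x\}=\phi_{A'}(h(a))$, which is exactly the right side. Then I would verify naturality of $\varepsilon$. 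Given a cylindric UVO-map $f\colon X\to X'$, one must show $\mathcal{S}_1(\mathcal{A}_1(f))\circ\varepsilon_X=\varepsilon_{X'}\circ f$, i.e., for all $x\in X$, $(f^{-1})^{-1}[\psi_X(x)]=\psi_{X'}(f(x))$. Again unwinding: the left side is $\{U\in\mathcal{COB}(X'):f^{-1}[U]\in\psi_X(x)\}=\{U\in\mathcal{COB}(X'):x\in f^{-1}[U]\}=\{U\in\mathcal{COB}(X'):f(x)\in U\}=\psi_{X'}(f(x))$. Both computations are formal diagram chases, essentially identical to the Boolean/ortholattice cases, so I would present them compactly, perhaps citing \cite{mcdonald, harding3} for the corresponding reduct-level naturality and noting that the cylindric structure adds nothing new to the naturality argument.

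The remaining points are bookkeeping: that $\eta_A$ and $\varepsilon_X$ are genuinely isomorphisms in $\mathcal{COL}$ and $\mathcal{CUVO}$ respectively (their inverses are automatically homomorphisms / cylindric UVO-maps since the forward maps are bijective structure-preserving maps — for $\varepsilon_X$ one uses that a bijective relational homeomorphism has inverse a relational homeomorphism, which is in particular a spectral map satisfying conditions 2--5 of Definition \ref{cuvo map}), and that well-definedness of the functors on morphisms was already handled in Lemmas \ref{continuous to homomorphism} and \ref{functors}. I do not expect any serious obstacle here: all the hard work — the object-level representation theorems, the fact that $\mathcal{S}_0(A)$ is compact without choice, and that the functors act correctly on morphisms — has been done in the preceding sections. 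The only mild subtlety is to confirm that being a relational homeomorphism between cylindric UVO-spaces really does imply the four conditions of a cylindric UVO-map (spectral map, the two $\perp$-conditions, and the $R_i$- and $\Delta_{i,k}$-compatibility conditions); this follows because a homeomorphism is spectral, a relational isomorphism transports $\perp$ and each $R_i$ in both directions (giving conditions 2--4 directly, with condition 3 using surjectivity), and $f^{-1}[\Delta'_{i,k}]=\Delta_{i,k}$ follows from $\Delta_{i,k}=\phi(\delta_{i,k})$ together with the fact that $\psi$ and its inverse respect the distinguished subsets. Hence the natural isomorphisms exist and the dual equivalence follows.
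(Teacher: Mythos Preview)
Your proposal is correct and follows essentially the same approach as the paper: define the unit and counit via $\phi$ and $\psi$, invoke Theorems \ref{topological representation} and \ref{algebraic realization} for the object-level isomorphisms, and check naturality by the standard $(h^{-1})^{-1}[\phi_A(a)]=\phi_{A'}(h(a))$ and $(f^{-1})^{-1}[\psi_X(x)]=\psi_{X'}(f(x))$ computations. If anything, you are more explicit than the paper, which simply asserts that the relevant squares commute and that $\phi$, $\psi$ are isomorphisms; your additional remarks about why a relational homeomorphism satisfies the cylindric UVO-map conditions fill in a point the paper leaves implicit.
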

 \begin{proof}
    First note that the topological representation provided by Theorem \ref{topological representation} and the algebraic realization provided by Theorem \ref{algebraic realization} along with Lemmas \ref{continuous to homomorphism} and \ref{functors}. imply that the following diagrams commute:  
    \par
    \vspace{.1cm}
\begin{tikzcd}
A \arrow[dd, "h"'] \arrow[r] \arrow[r] \arrow[rr, bend left, "\phi"] & \mathcal{S}_0(A) \arrow[r]                                                & \mathcal{A}_0(\mathcal{S}_0(A)) \arrow[dd, "\mathcal{A}_1(\mathcal{S}_1(h))"] \\
{} \arrow[r, "1-1", no head, dashed]                                       & {} \arrow[r, "1-1", no head, dashed]                                                 & {}                                                                                          \\
A' \arrow[r] \arrow[rr, bend right, "\phi"']                             & \mathcal{S}_0(A') \arrow[uu, "\mathcal{S}_1(h)" description] \arrow[r] & \mathcal{A}_0(\mathcal{S}_0(A'))                                                
\end{tikzcd}
\hskip .2em
\begin{tikzcd}
X \arrow[dd, "f"'] \arrow[r] \arrow[rr, bend left, "\psi"] & \mathcal{A}_0(X) \arrow[r]                                                & \mathcal{S}_0(\mathcal{A}_0(X)) \arrow[dd, "\mathcal{S}_1(\mathcal{A}_1(f))"] \\
{} \arrow[r, "1-1", no head, dashed]                            & {} \arrow[r, "1-1", no head, dashed]                                                & {}                                                                                         \\
X' \arrow[r] \arrow[rr, bend right, "\psi"']                   & \mathcal{A}_0(X') \arrow[uu, "\mathcal{A}_1(f)" description] \arrow[r] & \mathcal{S}_0(\mathcal{A}_0(X'))                              
\end{tikzcd}
\noindent where $\mathcal{A}_1(\mathcal{S}_1(h))=(h^{-1})^{-1}$ and $\mathcal{S}_1(\mathcal{A}_1(f))=(f^{-1})^{-1}$ since $\mathcal{S}_1(h)=h^{-1}$ and $\mathcal{A}_1(f)=f^{-1}$. The dotted lines in the above commutative diagrams labelled \say{$1-1$} indicate the $1-1$ (dual) correspondence that has been established between homomorphisms of cylindric ortholattices and cylindric UVO-maps. 

Moreover, since $\phi$ is always an isomorphism and $\psi$ is always a relational homeomorphism, this means that for any $I$-dimensional cylindric ortholattice $A$ and $I$-dimensional cylindric UVO-space $X$, the components $\eta_A\colon \text{id}_A(A)\to \mathcal{A}_0(\mathcal{S}_0(A))$ and $\theta_X\colon \text{id}_X(X)\to \mathcal{S}_0(\mathcal{A}_0(X))$ of the respective natural transformations $\eta\colon \text{id}_{\mathcal{COL}}\to\mathcal{A}_F\circ\mathcal{S}_F$ and $\theta\colon\text{id}_{\mathcal{CUVO}}\to\mathcal{S}_F\circ\mathcal{A}_F$ are isomorphisms. Therefore $\eta=\phi$ and $\theta=\psi$ are natural isomorphisms. This completes the proof that the contravariant functors $\mathcal{A}_F\colon\mathcal{CUVO}\to\mathcal{COL}$ and $\mathcal{S}_F\colon\mathcal{COL}\to\mathcal{CUVO}$ determine a dual equivalence of categories between $\mathcal{COL}$ and $\mathcal{CUVO}$.  
 \end{proof}
\section{Cylindric Boolean algebras}
 In this final section, we establish canonical completion and choice-free duality results for cylindric Boolean algebras and conclude with a discussion of how these results are related to the canonical completion and choice-free duality results that have been established for cylindric ortholattices in the previous sections. Although the definition of cylindric Boolean algebras was hinted at within Section 2, we give an explicit definition for the sake of clarity. We note that quantifiers on a Boolean algebra are defined the same way as they are on an ortholattice.
\begin{definition}
 An $I$\emph{-dimensional cylindric Boolean algebra} is a Boolean algebra $A$ equipped with a family of unary operators $(\exists_i)_{i\in I}$ and a family of constants $(\delta_{i,k})_{i,k\in I}$ such that for all $i,k,l\in I$:
    \begin{enumerate}
        \item $\exists_{i}\colon A\to A$ is a quantifier;
        \item $\exists_{i}\exists_{k}a=\exists_{k}\exists_{i}a$;
        \item $\delta_{i,k}=\delta_{k,i}$ and $\delta_{i,i}=1$; 
        \item $k\not=i,l\Rightarrow\delta_{i,l}=\exists_{k}(\delta_{i,k}\wedge \delta_{k,l})$; 
        \item $i\not=k\Rightarrow\exists_i(d_{i,k}\wedge a)\wedge\exists_{k}(d_{i,k}\wedge a^{\perp})=0$. 
    \end{enumerate}
\end{definition}

We now introduce some key aspects of the choice-free duality for Boolean algebras established by Bezhanishvili and Holliday \cite{bezhanishvili}, which is based on the choice-free representation of Boolean algebras developed by Holliday \cite{holliday} and makes use of the concept of a possibility frame. A \emph{possibility frame} is a triple $\langle X;\leq,P\rangle$ such that $\langle X;\leq\rangle$ is a poset and $P$ is the collection of regular open subsets in the upset topology on $\langle X;\leq\rangle$ where $P$ contains $X$ and is closed under intersection and the operation $^*$ defined by: 
\[U^*=\{x\in X:x'\not\in U\hspace{.2cm}\text{for all $x'\geq x$}\}.\]
Recall that if $X$ is a topological space, then an open set $U\subseteq X$ is \emph{regular open} if and only if $\text{Int}(\text{Cl}(U))=U$ where $\text{Int}$ and $\text{Cl}$ are the interior and closure operations on $X$. Moreover, in the upset topology $\text{UP}(X;\leq)$, we have: 
\[\text{Int}_{\leq}(U)=\{x\in X:y\in U\hspace{.2cm}\text{for all $y\geq x$}\}\]
\[\text{Cl}_{\leq}(U)=\{x\in X:y\in U\hspace{.2cm}\text{for some $y\geq x$}\}\] and thus $\text{Int}_{\leq}(U)=X\setminus\text{Cl}_{\leq}(X\setminus U)$.

Therefore, an open set $U$ is \emph{regular open} in the upset topology on $X$ if and only if $\text{Int}_{\leq}(\text{Cl}_{\leq}(U))=U$. Moreover, since $U^*=\text{Int}_{\leq}(X\setminus U)$, it follows that an open set $U$ is \emph{regular open} in $\text{UP}(X;\leq)$ if and only if $U^{**}=U$. We will denote the collection of all such regular opens by $\mathcal{REG}(X)$ so that $\mathcal{COREG}(X)=\mathcal{CO}(X)\cap\mathcal{REG}(X)$, i.e., $\mathcal{COREG}(X)$ is the collection of all compact open subsets of $X$ that are regular open in the upset topology on $X$.

The following class of spaces was introduced by Bezhanishvili and Holliday \cite{bezhanishvili} and arise as the choice-free duals of the Boolean algebras. Let: 
\[\mathcal{COREG}_X(x)=\{U\in\mathcal{COREG}(X):x\in U\}.\]
\begin{definition}\label{uv space}
    A \emph{UV-space} is a $T_0$-space $X$ satisfying the following conditions: 
    \begin{enumerate}
        \item $\mathcal{COREG}(X)$ is closed under $\cap$, $\text{Int}_{\leqslant}(X\setminus\cdot)$ and is a basis for $X$; 
        \item every proper filter in $\mathcal{COREG}(X)$ is $\mathcal{COREG}_X(x)$ for some $x\in X$. 
    \end{enumerate}
\end{definition}

The following are relational extensions of the UV-spaces described above.

\begin{definition}\label{cylindric uv space}
    Let $\exists_{S_i}[\mathcal{COREG}_X(x)]:=\{\exists_{S_i}U:U\in\mathcal{COREG}_X(x)\}$. An \emph{I-dimensional cylindric UV-space} is a topological space of the following shape $\langle X;(S_i)_{i\in I},(\Delta_{i,k})_{i,k\in I},\tau\rangle$ such that for all $i,k,l\in I$: 
    \begin{enumerate}
        \item $\langle X;\tau\rangle$ is a UV-space; 
        \item $S_i$ is an equivalence relation; 
         \item $S_i$ commutes with $S_k$;
        \item $\Delta_{i,k}\subseteq X$ with $\Delta^{**}_{i,k}=\Delta_{i,k}=\Delta_{k,i}$ and $\Delta_{i,i}=X$;
        \item if $i,l\not=k$, then $S_k[\Delta_{i,k}\cap \Delta_{k,l}]=\Delta_{i,l}$;
        \item if $i\not=k$, then $S_i[\Delta_{i,k}\cap U]\cap S_i[\Delta_{i,k}\cap U^*]=\emptyset$; 
        \item $S_i[U]\in\mathcal{COREG}(X)$ whenever $U\in\mathcal{COREG}(X)$; 
        \item if $x\overline{S}_iy$, then $\exists_{S_i}[\mathcal{COREG}_X(x)]\not=\exists_{S_i}[\mathcal{COREG}_X(y)]$.  
        \item $\Delta_{i,k}\in\mathcal{CO}(X)$. 
    \end{enumerate}
\end{definition}
It is obvious that for any $I$-dimensional cylindric UV-space $X$, we have $\Delta_{i,k}\in\mathcal{COREG}(X)$ for all $i,k\in I$ by conditions 4 and 9 of Definition \ref{cylindric uv space}. 
\begin{definition}\label{canonical frame of a cylindric ba}
    Let $A$ be an $I$-dimensional cylindric Boolean algebra. Then define the topological space $\mathcal{F}_0(A)=\langle \mathfrak{F}(A);(S_i)_{i\in I},(\Delta_{i,k})_{i,k\in I},\tau(\beta)\rangle$ as follows: 
    \begin{enumerate}
       \item $\mathfrak{F}(A)$ is the collection of non-empty proper filters of $A$; 
       \item $S_i\subseteq\mathfrak{F}(A)\times\mathfrak{F}(A)$ is defined by $xS_iy\Longleftrightarrow\exists_i[x]=\exists_i[y]$ for all $i\in I$; 
      \item $\Delta_{i,k}\subseteq\mathfrak{F}(A)$ is defined by $\Delta_{i,k}=\phi(\delta_{i,k})=\{x\in\mathfrak{F}(A):\delta_{i,k}\in x\}$; 
      \item  $\Omega(\beta)\subseteq\wp(\mathfrak{F}(A))$ is the topology generated by the basis $\beta=\{\phi(a):a\in A\}$ where $\phi\colon A\to\wp(\mathfrak{F}(A))$ is defined by $\phi(a)=\{x\in\mathfrak{F}(A):a\in x\}$.
    \end{enumerate}
\end{definition}

For any $I$-dimensional cylindric UV-space $X$, define: \[U\sqcup V:=\text{Int}_{\leqslant}(\text{Cl}_{\leqslant}(U\cup V)),\hspace{.3cm}\exists_{S_i}U:=S_i[U]\] for all $U,V\in\mathcal{COREG}(X)$. 
\begin{lemma}\label{functor from ba to uv}
Let $A$ be an $I$-dimensional cylindric Boolean algebra and let $X$ be an $I$-dimensional cylindric UV-space. Then: 
\begin{enumerate}
    \item $\mathcal{F}_0(A)$ is an $I$-dimensional cylindric UV-space whose specialization order is given by set inclusion; 
    \item $\mathcal{G}_0(X)=\langle\mathcal{COREG}(X);\cap,\sqcup;^*,\emptyset,X,(\exists_{S_i})_{i\in I},(\Delta_{i,k})_{i,k}\rangle$ is an $I$-dimensional cylindric Boolean algebra.
\end{enumerate}   
\end{lemma}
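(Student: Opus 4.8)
The plan is to prove Lemma \ref{functor from ba to uv} by mirroring the proof strategy already established in the ortholattice setting (Lemma \ref{algebra to space}, Lemma \ref{space to algebra}, and the earlier Lemma \ref{lemma 3.3}), transported across the choice-free Boolean duality of Bezhanishvili and Holliday. For part (1), I would first invoke the Bezhanishvili–Holliday result that $\langle\mathfrak{F}(A);\tau(\beta)\rangle$ is a UV-space whenever $A$ is a Boolean algebra, so that condition 1 of Definition \ref{cylindric uv space} holds and the specialization order is set inclusion. Next I would check conditions 2, 3, 7, 8 — the diagonal-free cylindric structure — essentially as in the monadic case: reflexivity, symmetry and transitivity of each $S_i$ are immediate from its definition via the equation $\exists_i[x]=\exists_i[y]$; commutation $S_i\circ S_k = S_k\circ S_i$ follows from axiom 2 ($\exists_i\exists_k = \exists_k\exists_i$) by the same filter-generation argument used in Lemma \ref{lemma 3.3} (taking the filter generated by $\exists_k[x]$ as the intermediate witness); condition 7 ($S_i[U]\in\mathcal{COREG}(X)$ for $U$ compact open regular open) and condition 8 (the separation property for $\overline{S}_i$) come from the corresponding facts about quantifiers on the dual space, analogous to Definition \ref{muvo}(5)–(6). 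Conditions 4, 5, 9 concerning $\Delta_{i,k}=\phi(\delta_{i,k})$ are handled exactly as in Lemma \ref{lemma 3.3}: $\Delta_{i,k}=\Delta_{k,i}$ and $\Delta_{i,i}=X$ follow from axiom 3 applied pointwise; $\Delta_{i,k}^{**}=\Delta_{i,k}$ because each $\phi(a)$ is regular open in the UV-space; $\Delta_{i,k}\in\mathcal{CO}(X)$ since it is a basic compact open set $\phi(\delta_{i,k})$; and $S_k[\Delta_{i,k}\cap\Delta_{k,l}]=\Delta_{i,l}$ follows from axiom 4 combined with the Boolean version of $\phi(a)\cap\phi(b)=\phi(a\wedge b)$, using upward closure of filters for the easy inclusion and the principal filter ${\uparrow}(\delta_{i,k}\wedge\delta_{k,l})$ as the witness for the reverse inclusion.

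The genuinely new ingredient — and the one I expect to be the main obstacle — is condition 6 of Definition \ref{cylindric uv space}, namely $S_i[\Delta_{i,k}\cap U]\cap S_i[\Delta_{i,k}\cap U^*]=\emptyset$ for $i\neq k$, since this is the frame-side avatar of the extra Boolean axiom 5 ($\exists_i(d_{i,k}\wedge a)\wedge\exists_k(d_{i,k}\wedge a^{\perp})=0$) that has no counterpart in the ortholattice development. For a basic open $U=\phi(a)$, one computes $\Delta_{i,k}\cap U=\phi(\delta_{i,k}\wedge a)$ and $U^*=\phi(a^{\perp})$ (using that complementation in $\mathcal{COREG}$ corresponds to $^*$ via the Boolean representation), so $\Delta_{i,k}\cap U^*=\phi(\delta_{i,k}\wedge a^{\perp})$. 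Then I would show that if a filter $z$ lay in both $S_i[\phi(\delta_{i,k}\wedge a)]$ and $S_i[\phi(\delta_{i,k}\wedge a^{\perp})]$, there would be filters $x,y$ with $\exists_i[x]=\exists_i[z]=\exists_i[y]$, $\delta_{i,k}\wedge a\in x$, and $\delta_{i,k}\wedge a^{\perp}\in y$; hence $\exists_i(\delta_{i,k}\wedge a)\in\exists_i[x]=\exists_i[z]$ and $\exists_i(\delta_{i,k}\wedge a^{\perp})\in\exists_i[y]=\exists_i[z]$; but here I need to relate $\exists_i(\delta_{i,k}\wedge a^{\perp})$ to $\exists_k(\delta_{i,k}\wedge a^{\perp})$ — this is where the symmetry $\delta_{i,k}=\delta_{k,i}$ and the interplay of the two cylindrifications must be used carefully, perhaps passing through the substitution operator $s^i_k$, so that axiom 5 delivers that the meet of these two elements is $0$, contradicting that $z$ is a proper filter. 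Getting the indices and the use of $\delta_{i,k}=\delta_{k,i}$ right in this step is the delicate point; it may require noting that on $\Delta_{i,k}$ the relations $S_i$ and $S_k$ interact so that membership in $S_i[\Delta_{i,k}\cap V]$ can be re-expressed in a form to which axiom 5 directly applies.

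For part (2), the argument is entirely formal given part (1): by the Bezhanishvili–Holliday duality, $\langle\mathcal{COREG}(X);\cap,\sqcup,^*,\emptyset,X\rangle$ is a Boolean algebra whenever $X$ is a UV-space (this is Tarski's regular-open theorem in choice-free form), so it remains to check the cylindric axioms 1–5 for $\exists_{S_i}U:=S_i[U]$ and $\Delta_{i,k}$. That each $\exists_{S_i}$ is a quantifier, that the $\exists_{S_i}$ pairwise commute, and that the $\Delta_{i,k}$ satisfy axioms 3–4 are the dual translations of conditions 2,3,7,8 and 4,5 of Definition \ref{cylindric uv space}, proved just as in Lemma \ref{set representation} and Lemma \ref{space to algebra} for the monadic/cylindric orthoframe case. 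The one axiom needing the new condition 6 is axiom 5: given $U\in\mathcal{COREG}(X)$, one has $\exists_i(\Delta_{i,k}\cap U)\wedge\exists_k(\Delta_{i,k}\cap U^*)=S_i[\Delta_{i,k}\cap U]\cap S_k[\Delta_{i,k}\cap U^*]$ after the appropriate regular-open closure, and I would massage this — again using $\Delta_{i,k}=\Delta_{k,i}$ so that $\Delta_{i,k}\cap U^*=\Delta_{k,i}\cap U^*$ and the $S_k$-image can be compared with an $S_i$-image on $\Delta_{i,k}$ — into an instance of condition 6, yielding $\emptyset$ as required. Thus both parts follow, with the crux being the correct bookkeeping around the twin diagonal/cylindrification pair in verifying the non-ortholattice axiom.
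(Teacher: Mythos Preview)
Your overall strategy---invoke Bezhanishvili--Holliday for the UV-space core and then verify the cylindric conditions along the template of Lemma~\ref{lemma 3.3}---matches the paper's. The gap is in how you transport the witness constructions. In the ortholattice case $R_i$ is defined by the \emph{inclusion} $\exists_i[x]\subseteq y$, whereas here $S_i$ is defined by the \emph{equality} $\exists_i[x]=\exists_i[y]$ (this is what makes $S_i$ an equivalence relation). Your proposed witness for commutation, the filter generated by $\exists_k[x]$ alone, yields $\exists_k[w]=\exists_k[x]$ but only $\exists_i[w]\subseteq\exists_i[z]$, not the reverse inclusion; one must generate from $\exists_k[x]\cup\exists_i[z]$ and use $\exists_j(p\wedge\exists_jq)=\exists_jp\wedge\exists_jq$ together with $\exists_i\exists_k=\exists_k\exists_i$ to check properness and both equalities. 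The same issue affects your witness ${\uparrow}(\delta_{i,k}\wedge\delta_{k,l})$ for the reverse inclusion of condition~5: you need to adjoin $\exists_k[y]$ as generators to secure $\exists_k[x]=\exists_k[y]$ rather than merely $\exists_k[x]\subseteq y$. The paper's written argument for these two points proceeds differently (and is itself somewhat compressed), but the equality-versus-inclusion distinction is the point you are missing.

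Conversely, your worry about condition~6 and Boolean axiom~5 is misplaced. Condition~6 of Definition~\ref{cylindric uv space} has $S_i$ in \emph{both} positions, and the paper's verification simply observes that a proper filter in $S_i[\phi(\delta_{i,k}\wedge a)]\cap S_i[\phi(\delta_{i,k}\wedge a^{\perp})]$ would contain $\exists_i(\delta_{i,k}\wedge a)\wedge\exists_i(\delta_{i,k}\wedge a^{\perp})=0$, a contradiction---no passage through $\exists_k$, substitution operators, or the symmetry $\delta_{i,k}=\delta_{k,i}$ is required. (This is the standard Henkin--Monk--Tarski form of the axiom; the $\exists_k$ in the paper's Definition~7.1 is evidently a slip.) Correspondingly, in part~(2) axiom~5 for $\mathcal{G}_0(X)$ is read off directly from condition~6 without any massaging.
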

\begin{proof}
    For part 1, it follows by \cite[Theorem 5.4]{bezhanishvili} that $\langle\mathfrak{F}(A);\tau(\beta)\rangle$ is a UV-space whose specialization order is given by set inclusion if $A$ is a Boolean algebra. It is obvious that $S_i$ is an equivalence relation on $\mathfrak{F}(A)$ for all $i\in I$.

    To see that $S_i$ commutes with $S_k$ for all $i,k\in I$, assume that $\langle x,z\rangle\in S_i\circ S_k$ so that $\exists_i[x]=\exists_i[y]$ and $\exists_k[y]=\exists_k[z]$ for some $y\in\mathfrak{F}(A)$. Assume $a\in\exists_k[x]$ so that $a=\exists_kb$ for some $b\in x$. Then $\exists_kb\in x$ since $b\leq\exists_kb$ and $x$ is an upset. Hence $\exists_kb=a\in \exists_i[x]$ and then by our hypothesis, we have $a\in\exists_i[y]$. A similar argument shows that $a\in\exists_k[y]$ so that $\exists_k[x]\subseteq\exists_k[y]$. It can be similarly shown that $\exists_k[y]\subseteq\exists_k[x]$ as well as $\exists_i[y]=\exists_i[z]$ so that $S_i\circ S_k\subseteq S_k\circ S_i$. The proof of the $S_k\circ S_i\subseteq S_i\circ S_k$ inclusion runs the same.

We now verify that $S_k[\Delta_{i,k}\cap \Delta_{k,l}]=\Delta_{i,l}$ whenever $i,l\not=k$. First note that analogously to the proof of Lemma \ref{lemma 3.3} we have $S_k[\Delta_{i,k}\cap\Delta_{k,l}]=S_k[\phi(\delta_{i,k})\cap\phi(\delta_{k,l})]=S_k[\phi(\delta_{i,k}\wedge\delta_{k,l})]$ and hence it suffices to demonstrate $S_k[\phi(\delta_{i,k}\wedge\delta_{k,l})]=\Delta_{i,l}$. Assume that $y\in s_k[\phi(\delta_{i,k}\wedge\delta_{k,l})]$ so $xS_ky$ and hence $\exists_k[x]=\exists_k[y]$ for some $x\in\phi(\delta_{i,k}\wedge\delta_{k,l})$. Then $\delta_{i,k}\wedge\delta_{k,l}\in x$ and thus $\exists_k(\delta_{i,k}\wedge\delta_{k,l})\in x$ and hence we have $\exists_k(\delta_{i,k}\wedge\delta_{k,l})\in y$ since $\exists_k[x]=\exists_k[y]$. However $\exists_k(\delta_{i,k}\wedge\delta_{k,l})=\delta_{i,l}$ and thus $\delta_{i,l}\in y$ so $y\in\phi(\delta_{i,l})=\Delta_{i,l}$. For the converse inclusion, suppose $y\in\Delta_{i,l}$ so that $y\in\phi(\delta_{i,l})$ and hence $\delta_{i,l}\in y$. Now, conversely, suppose $y\in\Delta_{i,l}=\phi(\delta_{i,l})$. It suffices to find some $x\in\phi(\delta_{i,k}\wedge\delta_{k,l})$ such that $\exists_k[x]=\exists_k[y]$. Notice that $y\in\phi(\delta_{i,k}\wedge\delta_{k,l})$ since $\delta_{i,l}=\exists_l(\delta_{i,k}\wedge\delta_{k,l})\geq\delta_{i,k}\wedge\delta_{k,l}$ and by hypothesis we have $\delta_{i,l}\in y$ and hence we have $\exists_k[x]=\exists_k[y]$ for some $x\in\phi(\delta_{i,k}\wedge\delta_{k,l})$, i.e., when $x=y$. Therefore, we conclude $y\in S_k[\phi(\delta_{i,k}\wedge\delta_{k,l})]$, as desired.

    For condition 6, assume for the sake of contradiction that there exists some $y\in\mathfrak{F}(A)$ such that $y\in S_i[\Delta_{i,k}\cap U]\cap S_i[\Delta_{i,k}\cap U^*]$. Then $y\in S_i[\Delta_{i,k}\cap U]$ and $y\in S_i[\Delta_{i,k}\cap U]$ which implies $xS_iy$ and $zS_iy$ and thus $\exists_i[x]=\exists_i[y]$ and $\exists_i[z]=\exists_i[y]$ for some $x\in\Delta_{i,k}\cap U$ and some $z\in\Delta_{i,k}\cap U^*$. This, together with \cite[Theorem 3.13]{bezhanishvili}, implies that $x\in\phi(\delta_{i,k})\cap\phi(a)=\phi(\delta_{i,k}\wedge a)$ and $z\in\phi(\delta_{i,k})\cap\phi(a^{\perp})=\phi(\delta_{i,k}\wedge a^{\perp})$ for some $a\in A$. Therefore $\delta_{i,k}\wedge a\in x$ and $\delta_{i,k}\wedge a^{\perp}\in z$, and since $\exists_i[x]=\exists_i[y]$ and $\exists_i[z]=\exists_i[y]$, we have $\exists_i(\delta_{i,k}\wedge a)\in y$ and $\exists_i(\delta_{i,k}\wedge a^{\perp})\in y$. Since $y$ is a filter, we have $\exists_i(\delta_{i,k}\wedge a)\wedge\exists_i(\delta_{i,k}\wedge a^{\perp})\in y$ but $\exists_i(\delta_{i,k}\wedge a)\wedge\exists_i(\delta_{i,k}\wedge a^{\perp})=0$ and so $0\in y$, which contradicts the fact that $y$ is proper. Condition 8 follows from the definition of $S_i$ in $\mathcal{F}_0(A)$ and the fact that $\mathcal{COREG}_{\mathcal{F}_0(A)}(x)=\{\phi(c)\in\mathcal{COREG}(\mathcal{F}_0(A)):x\in\phi(c)\}$. Conditions 7 and 9 follow from Lemma \ref{functor from ba to uv} and \cite[Theorem 3.13]{bezhanishvili}.  

    For part 2, it follows by \cite[Proposition 5.3]{bezhanishvili} that $\langle\mathcal{COREG}(X);\cap,\sqcup,^*,\emptyset,X\rangle$ is a Boolean algebra whenever $X$ is a UV-space. The proof that $\exists_{S_i}$ is a quantifier on $\langle\mathcal{COREG}(X);\cap,\sqcup,^*,\emptyset,X\rangle$ follows analogously to \cite{halmos1}. Moreover, it is obvious that $\exists_{S_i}\exists_{S_k}U=\exists_{S_k}\exists_{S_i}U$ for all $i,k\in I$ and each $U\in\mathcal{COREG}(X)$ since $S_i$ and $S_k$ are commuting relations. Conditions 4-8 of Definition \ref{cylindric uv space} guarantee that $\Delta_{i,k}$ is a diagonal element in $\langle\mathcal{COREG}(X);\cap,\sqcup,^*,\emptyset,X,(\exists_{S_i})_{i\in I}\rangle$ for all $i,k\in I$.     
\end{proof}

The following lemma describes some useful properties of cylindric UV-spaces. 
\begin{lemma}\label{cylindric uv lemma}
    If $X$ is an $I$-dimensional cylindric UV-space, then: 
    \begin{enumerate}
     \item the specialization order $\leqslant$ is a partial order;
    \item $x\not\leqslant y$ implies there exists $U\in\mathcal{COREG}(X)$ such that $x\in U$ and $y\not\in U$;
        \item $xS_iy$ implies $\exists_{S_i}[\mathcal{COREG}_X(x)]=\exists_{S_i}[\mathcal{COREG}_X(y)]$.
    \end{enumerate}
\end{lemma}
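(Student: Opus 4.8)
\textbf{Proof proposal for Lemma \ref{cylindric uv lemma}.}

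The plan is to mirror the structure of the proof of Lemma \ref{muvo lemma}, treating the three parts in order, and to lean on the fact that an $I$-dimensional cylindric UV-space has a UV-space as its topological reduct. For part 1, I would note that the specialization order of \emph{any} topological space is a quasi-order (reflexive and transitive), and since every UV-space is by definition $T_0$ (Definition \ref{uv space}), antisymmetry holds; hence $\leqslant$ is a partial order. This is immediate and needs no real work.

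For part 2, I would argue exactly as in the monadic UVO-case: suppose $x \not\leqslant y$. By the definition of $\leqslant$, there is an open set $U$ with $x \in U$ and $y \notin U$. Since $\mathcal{COREG}(X)$ is a basis for $X$ (Definition \ref{uv space}(1)), we may write $U = \bigcup_{j} V_j$ with each $V_j \in \mathcal{COREG}(X)$, so $x \in V_j$ for some $j$ while $y \notin V_j$ for that $j$ (indeed $y \notin V_j$ for every $j$). Taking $V_j$ as the desired compact open regular open set finishes this part.

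For part 3, I would replay the argument of Lemma \ref{muvo lemma}(3) with equalities in place of inclusions. Assume $xS_iy$. Let $U \in \exists_{S_i}[\mathcal{COREG}_X(x)]$, so $U = \exists_{S_i}V = S_i[V]$ for some $V \in \mathcal{COREG}(X)$ with $x \in V$; note $S_i[V] \in \mathcal{COREG}(X)$ by Definition \ref{cylindric uv space}(7). Since $x \in V$ and $xS_iy$, we have $y \in S_i[V] = U$, so $U \in \mathcal{COREG}_X(y)$, i.e. $U \in \exists_{S_i}[\mathcal{COREG}_X(y)]$ because $U = S_i[V]$ and one checks $U = S_i[U]$ using that $S_i$ is an equivalence relation (so $S_i[S_i[V]] = S_i[V]$); this shows $\exists_{S_i}[\mathcal{COREG}_X(x)] \subseteq \exists_{S_i}[\mathcal{COREG}_X(y)]$. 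The reverse inclusion follows by symmetry of $S_i$ (again using that $S_i$ is an equivalence relation, so $yS_ix$), giving equality.

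The main subtlety—though it is minor—is part 3: unlike the monadic UVO-case where $R$ is only reflexive and transitive and one gets an \emph{inclusion}, here $S_i$ is an equivalence relation, which is precisely what upgrades the inclusion to an \emph{equality} and lets the argument run symmetrically in $x$ and $y$. One should also be careful that $\exists_{S_i}V$ means $S_i[V]$ rather than a biorthogonal closure (contrast with the ortholattice setting), so the identification $\exists_{S_i}V \in \mathcal{COREG}_X(y)$ is direct once $S_i[V]$ is known to be compact open regular open. Everything else is routine and parallels the cited lemma verbatim.
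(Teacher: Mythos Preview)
Your proposal is correct and follows essentially the same approach as the paper: parts 1 and 2 are handled by reference to the analogous argument in Lemma \ref{muvo lemma}, and part 3 proceeds by showing $y\in S_i[V]=U$ from $x\in V$ and $xS_iy$, then using symmetry of $S_i$ for the reverse inclusion. If anything, you are slightly more explicit than the paper in justifying the step $U\in\exists_{S_i}[\mathcal{COREG}_X(y)]$ by noting $S_i[S_i[V]]=S_i[V]$, which the paper leaves implicit.
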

\begin{proof}
    Parts 1 and 2 follow analogously to conditions 1 and 2 of Lemma \ref{muvo lemma}. For part 3, assume that $xS_iy$ for all $i\in I$ and let $U\in\exists_{S_i}[\mathcal{COREG}_X(x)]$. Then $U=\exists_{S_i}V$ for some $V\in\mathcal{COREG}_X(x)$ and hence $x\in V$. Since: 
    \[\exists_{S_i}V=S_i[V]=\{z\in X:wS_iz\hspace{.2cm}\text{for some}\hspace{.2cm}w\in V\}\] it follows that $y\in S_i[V]=\exists_{S_i}V$. Therefore $\exists_{S_i}V\in\mathcal{COREG}_X(y)$ and since we have assumed that $U=\exists_{S_i}V$, we have $U\in\exists_{S_i}[\mathcal{COREG}_X(y)]$. Thus we conclude that $\exists_{S_i}[\mathcal{COREG}_X(x)]\subseteq\exists_{S_i}[\mathcal{COREG}_X(y)]$. The $\exists_{S_i}[\mathcal{COREG}_X(y)]\subseteq\exists_{S_i}[\mathcal{COREG}_X(x)]$ inclusions runs similarly by the symmetry of $S_i$.       
\end{proof}
We now arrive at the topological representation theorem for $I$-dimensional cylindric Boolean algebras and the algebraic realization theorem for $I$-dimensional cylindric UV-spaces. 
\begin{theorem}\label{representation of ba}
    Every $I$-dimensional cylindric Boolean algebra $A$ is isomorphic to $\mathcal{G}_0(\mathcal{F}_0(A))$. Moreover, every $I$-dimensional cylindric UV-space $X$ is relationally homeomorphic to $\mathcal{F}_0(\mathcal{G}_0(X))$. 
\end{theorem}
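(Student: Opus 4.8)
The plan is to run the two halves in parallel with Theorems~\ref{topological representation} and~\ref{algebraic realization}, substituting the Bezhanishvili--Holliday choice-free machinery of~\cite{bezhanishvili} for the UVO-machinery of~\cite{mcdonald}, and invoking Lemma~\ref{functor from ba to uv} to know that $\mathcal{F}_0(A)$ and $\mathcal{G}_0(X)$ are genuine objects of $\mathcal{CUV}$ and $\mathcal{CBA}$. For the first statement, I would begin from the fact that $\phi\colon A\to\mathcal{G}_0(\mathcal{F}_0(A))$, $\phi(a)=\{x\in\mathfrak{F}(A):a\in x\}$, restricts to a Boolean algebra isomorphism of the underlying reducts onto $\mathcal{COREG}(\mathcal{F}_0(A))$ --- this is \cite[Theorem 3.13]{bezhanishvili} together with the identification of the carrier of $\mathcal{G}_0(\mathcal{F}_0(A))$ with $\mathcal{COREG}(\mathcal{F}_0(A))$. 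It then suffices to check that $\phi$ carries $\delta_{i,k}$ to $\Delta_{i,k}$, which is immediate from clause~3 of Definition~\ref{canonical frame of a cylindric ba}, and that $\phi$ intertwines $\exists_i$ with $\exists_{S_i}$, i.e.\ $\phi(\exists_i a)=S_i[\phi(a)]$ for all $a\in A$ and $i\in I$.

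The verification of $\phi(\exists_i a)=S_i[\phi(a)]$ is the crux, and it proceeds exactly as in the monadic Boolean case of Halmos~\cite{halmos1}. Write $B_0$ for the Boolean subalgebra of $\exists_i$-closed elements and note $x\cap B_0=\exists_i[x]$ for every filter $x$. The inclusion $S_i[\phi(a)]\subseteq\phi(\exists_i a)$ is routine: if $a\in x$ and $\exists_i[x]=\exists_i[y]$, then $\exists_i a\in\exists_i[x]=\exists_i[y]\subseteq y$. For the reverse, given $\exists_i a\in y$, take $x$ to be the filter generated by $\{a\}\cup(y\cap B_0)$, so that $x=\{p\in A:a\wedge c\le p\text{ for some }c\in y\cap B_0\}$ since $y\cap B_0$ is closed under $\wedge$. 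Two points must be argued: (i) $x$ is proper, because $0\in x$ would give $a\wedge c=0$ hence $a\le\neg c$ hence $\exists_i a\le\neg c$ (as $\neg c\in B_0$), forcing $0=\exists_i a\wedge c\in y$, a contradiction; and (ii) $x\cap B_0=y\cap B_0$, where only $\subseteq$ needs work: if $b\in x\cap B_0$ then $b\ge a\wedge c$ for some $c\in y\cap B_0$, so $a\le b\vee\neg c$, so $\exists_i a\le b\vee\neg c$, whence $\exists_i a\wedge c\le b$ and $b\in y$, i.e.\ $b\in y\cap B_0$. (This is the one place Booleanness is used.) Thus $xS_iy$ with $a\in x$, giving $y\in S_i[\phi(a)]$.

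For the second statement, I would take $\psi\colon X\to\mathcal{F}_0(\mathcal{G}_0(X))$, $\psi(x)=\{U\in\mathcal{COREG}(X):x\in U\}$, which is already a homeomorphism of the underlying UV-space reducts by the UV-space half of~\cite{bezhanishvili}, and is well defined, injective, and surjective by Definition~\ref{uv space} and Lemma~\ref{cylindric uv lemma}(2). It remains to see $\psi$ is a relational isomorphism for each $S_i$ and respects each $\Delta_{i,k}$. Since $\exists_{S_i}[\psi(x)]=\exists_{S_i}[\mathcal{COREG}_X(x)]$ by the definitions of $\psi$ and of $\exists_{S_i}$ on $\mathcal{COREG}(X)$, Lemma~\ref{cylindric uv lemma}(3) gives $xS_iy\Rightarrow\psi(x)S_i\psi(y)$ (the relation in $\mathcal{F}_0(\mathcal{G}_0(X))$ being equality of $\exists_{S_i}$-images), while the contrapositive of the converse is precisely clause~8 of Definition~\ref{cylindric uv space}. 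For the diagonals, $\psi(x)\in\Delta_{i,k}$ in $\mathcal{F}_0(\mathcal{G}_0(X))$ iff $\Delta_{i,k}\in\psi(x)$ iff $x\in\Delta_{i,k}$, so $\psi^{-1}[\Delta_{i,k}]=\Delta_{i,k}$; hence $\psi$ is a relational homeomorphism in the extended sense that it also matches the distinguished subsets.

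I expect the main obstacle to be the reverse inclusion in step two, namely producing the witness filter generated by $\{a\}\cup(y\cap B_0)$ and showing it is proper with the same $\exists_i$-image as $y$; the rest is bookkeeping over the corresponding choice-free Boolean-algebra results of~\cite{bezhanishvili}, exactly as Theorems~\ref{topological representation} and~\ref{algebraic realization} did over~\cite{mcdonald}. It would also be worth noting explicitly that commutativity of $S_i$ and $S_k$ in both constructed structures is supplied by Lemma~\ref{functor from ba to uv}, so that $\phi$ and $\psi$ really land in $\mathcal{CBA}$ and $\mathcal{CUV}$ respectively.
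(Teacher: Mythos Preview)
Your proposal is correct and follows essentially the same route as the paper's proof: invoke \cite[Theorems 3.13 and 5.4]{bezhanishvili} for the Boolean/UV-reducts, check $\phi(\delta_{i,k})=\Delta_{i,k}$ by definition, verify $\phi(\exists_i a)=S_i[\phi(a)]$, and for the relational isomorphism use Lemma~\ref{cylindric uv lemma}(3) forward and condition~8 of Definition~\ref{cylindric uv space} contrapositively. Your detailed construction of the witness filter generated by $\{a\}\cup(y\cap B_0)$ for the inclusion $\phi(\exists_i a)\subseteq S_i[\phi(a)]$ is in fact more careful than the paper, which simply asserts $S_i[\phi(a)]=\{x\in\mathfrak{F}(A):\exists_i a\in x\}$ without argument.
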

\begin{proof}
    By \cite[Theorem 3.13]{bezhanishvili} the Boolean algebra reduct of $A$ is isomorphic to the Boolean algebra reduct of $\mathcal{G}_0(\mathcal{F}_0(X))$ under $\phi(a)=\{x\in\mathfrak{F}(A):a\in x\}$. Also $\phi(\delta_{i,k})=\Delta_{i,k}$ and hence it remains to see that $\exists_i\mapsto\exists_{S_i}$ is a homomorphic mapping under $\phi$. It suffices to show that $\phi(\exists_ia)=\exists_{S_i}\phi(a)$. Since we have $\phi(a)=\{x\in\mathfrak{F}(A):a\in x\}$, it follows that $S_i[\phi(a)]=\{x\in\mathfrak{F}(A):\exists_ia\in x\}$ and hence we conclude $\phi(\exists_ia)=S_i[\phi(a)]=\exists_{S_i}\phi(a)$.
    
    By \cite[Theorem 5.4]{bezhanishvili}, it follows that $X$ is homeomorphic to $\mathcal{F}_0(\mathcal{G}_0(X))$ under $\psi(x)=\{U\in\mathcal{COREG}(X):x\in U\}$. To see that $\psi$ is a relational isomorphism, i.e., that $xS_i$ iff $\psi(x)S_i\psi(y)$ for all $i\in I$, we begin with the left-to-right implication by assuming that $xS_iy$. By Lemma \ref{cylindric uv lemma}, we have $\exists_{S_i}[\mathcal{COREG}_X(x)]=\exists_{S_i}[\mathcal{COREG}_X(y)]$, which by the definition of $\psi$ implies $\exists_{S_i}[\psi(x)]=\exists_{S_i}[\psi(s)]$ and therefore $\psi(x)S_i\psi(y)$. The converse implication follows by a contrapositive proof using condition 8 of Definition \ref{cylindric uv space}.    
\end{proof}

We now give a topological construction of the canonical completion of a cylindric Boolean algebra. 

\begin{theorem}\label{completion of ba}
    Let $A$ is an $I$-dimensional cylindric Boolean algebra and let $X$ be its dual $I$-dimensional cylindric UV-space. Then the completion $\langle\phi;\mathcal{REG}(X)\rangle$ is its canonical completion. 
\end{theorem}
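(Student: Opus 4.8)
The plan is to run the argument in exact parallel to Theorem~\ref{canonical completion}, transported to the Boolean/UV-space side. Write $X=\mathcal{F}_0(A)$; by Lemma~\ref{functor from ba to uv}(1) its specialization order $\leqslant$ is set inclusion, and $\phi(a)=\{x\in\mathfrak{F}(A):a\in x\}$. First I would record that $\mathcal{REG}(X)$ is a complete Boolean algebra: this is Tarski's observation \cite{tarski1,tarski2} applied to the upset topology $\text{UP}(X;\leqslant)$, with $\bigvee_i U_i=\text{Int}_{\leqslant}\text{Cl}_{\leqslant}(\bigcup_i U_i)$, finite meet equal to intersection, and $U^{*}$ acting as Boolean complement. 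Second, I would check that $\phi$ is a bounded-lattice (indeed Boolean) embedding of $A$ into $\mathcal{REG}(X)$: by Theorem~\ref{representation of ba}, $\phi$ is an isomorphism of $A$ onto $\mathcal{G}_0(X)$, whose universe $\mathcal{COREG}(X)$ sits inside $\mathcal{REG}(X)$ and whose operations $\cap,\sqcup,{}^{*}$ are precisely the restrictions of the Boolean operations of $\mathcal{REG}(X)$; hence $\langle\phi;\mathcal{REG}(X)\rangle$ is a completion of the lattice reduct of $A$.

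The substance of the proof is that $\phi$ is a dense and compact embedding, and since the canonical completion of a bounded lattice depends only on its lattice reduct and is unique up to isomorphism, this reduces to the choice-free result of Bezhanishvili and Holliday \cite{bezhanishvili} that $\langle\phi;\mathcal{REG}(\mathcal{F}_0(B))\rangle$ is the canonical completion of every Boolean algebra $B$. For \emph{density}, one uses the UV-space axioms — that $\mathcal{COREG}(X)$ is a basis and that every proper filter of $\mathcal{COREG}(X)$ is $\mathcal{COREG}_X(x)$ for some $x$ — to show that each proper filter $x$ of $A$ yields a closed element $\bigwedge_{\mathcal{REG}}\phi[x]$ of $\mathcal{REG}(X)$ containing $x$, that an arbitrary $W\in\mathcal{REG}(X)$ is the $\mathcal{REG}$-join of such closed elements, hence a join of meets of $\phi[A]$, and dually, via $W=W^{**}$ and the fact that Boolean complementation in $\mathcal{REG}(X)$ interchanges joins and meets, that $W$ is a meet of sets of the form $\phi(a^{\perp})$, hence a meet of joins. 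For \emph{compactness}, I would argue contrapositively: if $\bigwedge_{a\in S'}a\not\leq\bigvee_{b\in T'}b$ in $A$ for every finite $S'\subseteq S$ and $T'\subseteq T$, then the filter of $A$ generated by $S\cup\{b^{\perp}:b\in T\}$ is proper, hence is itself a point $x\in X$; one checks $x\in\bigwedge_{\mathcal{REG}}\phi[S]$ but $x\notin\bigvee_{\mathcal{REG}}\phi[T]$, contradicting $\bigwedge_{\mathcal{REG}}\phi[S]\leq\bigvee_{\mathcal{REG}}\phi[T]$. The crucial point is that this uses only the \emph{generated} proper filter and no maximal filter or ultrafilter, so the whole argument stays inside ZF.

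Finally, I would note that the operators carry over: by Lemma~\ref{functor from ba to uv} together with the standard canonical-extension formula for quantifiers recalled in the Remark following Theorem~\ref{canonical completion}, the relations $(S_i)_{i\in I}$ and constants $(\Delta_{i,k})_{i,k\in I}$ equip $\mathcal{REG}(X)$ with operators $(\exists_{S_i})_{i\in I}$ and diagonals $(\Delta_{i,k})_{i,k\in I}$ making it an $I$-dimensional cylindric Boolean algebra into which $\phi$ embeds $A$, so the completion indeed lives in $\mathcal{CBA}$. I expect the only genuine obstacle to be the density claim, namely that an arbitrary regular open of the (generally Alexandrov, non-spectral) upset topology on $X$ is both approximated from below by the closed elements it dominates and, via complementation, captured from above by meets of basic opens; once the UV-space axioms are brought to bear this goes through exactly as in the Boolean case, and the remaining items — completeness of $\mathcal{REG}(X)$, identification of $\phi$ as an embedding, and the choice-free compactness argument — are routine.
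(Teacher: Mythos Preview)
Your proposal is correct and follows essentially the same route as the paper's proof: both invoke Tarski for $\mathcal{REG}(X)$ being a complete Boolean algebra, cite the Bezhanishvili--Holliday result \cite[Theorem~8.27]{bezhanishvili} for density and compactness of $\phi$, appeal to the analogue of Lemma~\ref{functor from ba to uv} to equip $\mathcal{REG}(X)$ with the cylindric structure, and use Theorem~\ref{representation of ba} to conclude that $\phi$ is a cylindric Boolean homomorphism. The only difference is that you unpack the density and compactness arguments that the paper leaves inside the citation, which is harmless extra detail.
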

\begin{proof}
    By Tarski \cite{tarski1, tarski2}, it is known that $\mathcal{REG}(X)$ forms a Boolean algebra for any topological space $X$. By \cite[Theorem 8.27]{bezhanishvili}, the function $\phi$ is a dense and compact homomorphic embedding from the Boolean algebra reduct of $A$ into the Boolean algebra reduct of $\mathcal{REG}(X)$. Moreover, it can be shown similarly to Lemma \ref{functor from ba to uv}, that $\mathcal{REG}(X)$ forms a complete $I$-dimensional cylindric Boolean algebra. Lastly, by Theorem \ref{representation of ba}, it follows that $\phi$ is an $I$-dimensional cylindric Boolean algebra homomorphism.     
\end{proof}
The following are relational extensions of the UV-maps introduced by Bezhanishvili and Holliday \cite{bezhanishvili}. 
\begin{definition}\label{uv map}
    Let $X$ and $X'$ be $I$-dimensional cylindric UV-spaces. A function $f\colon X\to X'$ is a \emph{cylindric UV-map} if the following conditions are satisfied: 
    \begin{enumerate}
        \item $f$ is a spectral map; 
        \item if $f(x)\leqslant' y'$, there exists $y\in X$ such that $x\leqslant y$ and $f(y)=y'$; 
        \item $S_{i}[f^{-1}[U]]=f^{-1}[S_{i}[U]]$ for all $U\in\mathcal{COREG}(X)$; 
   \item $f^{-1}[\Delta'_{i,k}]=\Delta_{i,k}$. 
    \end{enumerate}
\end{definition}
If $X$ and $X'$ are UV-spaces and $f\colon X\to X'$ is a function, then $f$ is a \emph{UV-map} provided conditions 1 and 2 in Definition \ref{uv map} are satisfied. 
 \begin{proposition}[\protect{\cite[Corollary 6.4]{bezhanishvili}}]\label{preimage of reg is reg}
    Let $X$ and $X'$ be UV-spaces and let $f\colon X\to X'$ be a UV-map. Then $f^{-1}[U]\in\mathcal{COREG}(X)$ for each $U\in\mathcal{COREG}(X')$. 
\end{proposition}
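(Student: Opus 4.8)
The plan is to reduce the whole statement to the single observation that taking preimages under $f$ commutes with the pseudocomplement operation $^{*}$. First I would record two routine facts. Since $f$ is a spectral map between spectral spaces and $\mathcal{CO}(X')$ is a basis for $X'$, $f$ is continuous, hence monotone for the specialization orders: $x\leqslant y$ implies $f(x)\leqslant' f(y)$. Second, directly from the definition of a spectral map, $U\in\mathcal{COREG}(X')\subseteq\mathcal{CO}(X')$ gives $f^{-1}[U]\in\mathcal{CO}(X)$; in particular $f^{-1}[U]$ is open, hence an upset for $\leqslant$. So it only remains to show that $f^{-1}[U]$ is regular open in the upset topology on $X$, that is, $(f^{-1}[U])^{**}=f^{-1}[U]$.

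The heart of the argument is the claim that $f^{-1}[V^{*}]=(f^{-1}[V])^{*}$ for every $V\subseteq X'$, where on the left $^{*}$ is computed with respect to $\leqslant'$ on $X'$ and on the right with respect to $\leqslant$ on $X$. For $f^{-1}[V^{*}]\subseteq(f^{-1}[V])^{*}$, take $x$ with $f(x)\in V^{*}$ and any $y\geqslant x$; monotonicity gives $f(y)\geqslant' f(x)$, so $f(y)\notin V$, i.e.\ $y\notin f^{-1}[V]$; since $y$ was arbitrary, $x\in(f^{-1}[V])^{*}$. For the reverse inclusion, take $x\in(f^{-1}[V])^{*}$ and any $y'\geqslant' f(x)$; by condition 2 of Definition \ref{uv map} (the ``back'' condition on UV-maps) choose $y\in X$ with $x\leqslant y$ and $f(y)=y'$; then $y\notin f^{-1}[V]$, whence $y'=f(y)\notin V$; as $y'$ was arbitrary, $f(x)\in V^{*}$, i.e.\ $x\in f^{-1}[V^{*}]$.

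Applying this commutation identity twice yields $(f^{-1}[U])^{**}=f^{-1}[U^{**}]$, and since $U\in\mathcal{COREG}(X')$ is regular open in the upset topology on $X'$ we have $U^{**}=U$, so $(f^{-1}[U])^{**}=f^{-1}[U]$. Together with $f^{-1}[U]\in\mathcal{CO}(X)$ from the first paragraph, this gives $f^{-1}[U]\in\mathcal{CO}(X)\cap\mathcal{REG}(X)=\mathcal{COREG}(X)$, as required.

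I expect the only real subtlety to be the reverse inclusion in the commutation identity: it is precisely there that the ``back'' condition is indispensable, since a merely continuous (or even spectral) map need not pull regular opens back to regular opens. Everything else — continuity forcing monotonicity, preservation of compact opens, and the characterization of regular opens via $U^{**}=U$ — is already available in the excerpt and requires no further work.
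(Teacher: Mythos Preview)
Your argument is correct. The key commutation identity $f^{-1}[V^{*}]=(f^{-1}[V])^{*}$ is exactly the right lever, and your use of monotonicity (from continuity) for one inclusion and the back condition (Definition~\ref{uv map}(2)) for the other is precisely how it goes.

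As for comparison: the paper does not actually prove this proposition. It is stated with a citation to \cite[Corollary~6.4]{bezhanishvili} and no proof is given. So there is nothing to compare your argument against within this paper. That said, your approach is the exact analogue of what the paper \emph{does} do for the parallel UVO-space statement (Proposition~\ref{prop1 spectral map}), where the proof sketch records Bimb\'o's identity $f^{-1}[U^{\perp}]=f^{-1}[U]^{\perp}$ and then appeals to spectrality for the compact-open part. You have effectively reconstructed the Boolean/UV version of that same argument, with $^{*}$ in place of $^{\perp}$ and the back condition on $\leqslant$ in place of the back condition on $\perp$. In that sense your proof is not a different route so much as a fleshed-out instance of the template the paper already uses elsewhere but chose to outsource here.
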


By $\mathcal{CBA}$ we denote the category of $I$-dimensional cylindric Boolean algebras and homomorphisms and by $\mathcal{CUV}$ we denote the category of $I$-dimensional cylindric UV-spaces and UV-maps. 

\begin{theorem}\label{ba homo to uv map}
     Let $X$ and $X'$ be $I$-dimensional cylindric UV-spaces and let $f\colon X\to X'$ be an $I$-dimensional cylindric UV-map. Then $\mathcal{G}_1(f)\colon \mathcal{G}_0(X')\to\mathcal{G}_0(X)$ defined by $\mathcal{G}_1(f):=f^{-1}$ is an $I$-dimensional cylindric Boolean algebra homomorphism. Therefore $\mathcal{G}_F=\langle\mathcal{G}_0,\mathcal{G}_1\rangle\colon\mathcal{CUV}\to\mathcal{CBA}$ determines a contravariant functor. Moreover, let $A$ and $A'$ be $I$-dimensional cylindric Boolean algebras and let $h\colon A\to A'$ be an $I$-dimensional cylindric Boolean algebra homomorphism. Then the map $\mathcal{F}_1(h)\colon\mathcal{F}_0(A')\to\mathcal{F}_0(A)$ defined by $\mathcal{F}_1(h):=h^{-1}$ is an $I$-dimensional cylindric UV-map. Therefore $\mathcal{F}_F=\langle\mathcal{F}_0,\mathcal{F}_1\rangle\colon\mathcal{CBA}\to\mathcal{CUV}$ determines a contravariant functor.  
\end{theorem}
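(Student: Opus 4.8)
The plan is to reduce both halves of the theorem to the choice-free duality for Boolean algebras of Bezhanishvili and Holliday \cite{bezhanishvili}, applied to the Boolean reducts, and then to verify by hand that the two remaining pieces of structure --- the quantifiers $\exists_{S_i}$ and the diagonals $\Delta_{i,k}$ --- are transported correctly. The whole argument runs in close parallel to the cylindric ortholattice case treated in Lemmas \ref{continuous to homomorphism} and \ref{functors}, and is if anything slightly smoother, since here $\exists_{S_i}U=S_i[U]$ on the nose whereas in the ortholattice case $\exists_{R_i}U=R_i[U]^{\perp\perp}$ required a biorthogonal closure.

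\textbf{The functor $\mathcal{G}_F$.} Let $f\colon X\to X'$ be a cylindric UV-map. By Proposition \ref{preimage of reg is reg}, $f^{-1}$ restricts to a map $\mathcal{COREG}(X')\to\mathcal{COREG}(X)$, and by the Boolean case of \cite{bezhanishvili} this restriction is a Boolean algebra homomorphism between the Boolean reducts of $\mathcal{G}_0(X')$ and $\mathcal{G}_0(X)$; so only $\exists_{S_i}$ and $\Delta_{i,k}$ remain to be checked. For $U\in\mathcal{COREG}(X')$, since $\exists_{S_i}U=S_i[U]$, I would compute
\begin{align*}
\mathcal{G}_1(f)[\exists_{S_i}U]&=f^{-1}[S_i[U]]=S_i[f^{-1}[U]]\\
&=\exists_{S_i}f^{-1}[U]=\exists_{S_i}\mathcal{G}_1(f)[U],
\end{align*}
where the second equality is condition 3 of Definition \ref{uv map} and condition 7 of Definition \ref{cylindric uv space} guarantees $S_i[f^{-1}[U]]\in\mathcal{COREG}(X)$, so each expression above is a genuine element of $\mathcal{G}_0(X)$. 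For the diagonals, condition 4 of Definition \ref{uv map} gives $\mathcal{G}_1(f)[\Delta'_{i,k}]=f^{-1}[\Delta'_{i,k}]=\Delta_{i,k}$ outright, and the remaining cylindric identities (notably the substitution identity) are then preserved automatically, since homomorphisms preserve equations and, by Lemma \ref{functor from ba to uv}(2), $\mathcal{G}_0(X)$ and $\mathcal{G}_0(X')$ are genuine cylindric Boolean algebras. Functoriality of $\mathcal{G}_F$ follows from $\mathrm{id}^{-1}=\mathrm{id}$ and $(f\circ g)^{-1}=g^{-1}\circ f^{-1}$, together with the fact, just established, that cylindric UV-maps are sent to cylindric Boolean algebra homomorphisms.

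\textbf{The functor $\mathcal{F}_F$.} Let $h\colon A\to A'$ be a cylindric Boolean algebra homomorphism, and write $\phi,\phi'$ for the canonical representation maps into $\mathcal{F}_0(A),\mathcal{F}_0(A')$ (noting $h^{-1}[x]$ is a proper filter of $A$ whenever $x$ is one, so $\mathcal{F}_1(h)=h^{-1}$ is well defined). By the Boolean case of \cite{bezhanishvili}, $\mathcal{F}_1(h)$ is a UV-map between the underlying UV-spaces, so conditions 1 and 2 of Definition \ref{uv map} hold. For condition 3, I would combine the identity $S_i[\phi(a)]=\phi(\exists_i a)$ from the proof of Theorem \ref{representation of ba} with the routine unwinding $(h^{-1})^{-1}[\phi(a)]=\phi'(h(a))$; using $h(\exists_i a)=\exists_i h(a)$, this yields
\begin{align*}
S_i\bigl[\mathcal{F}_1(h)^{-1}[\phi(a)]\bigr]&=S_i[\phi'(h(a))]=\phi'(\exists_i h(a))\\
&=\phi'(h(\exists_i a))=\mathcal{F}_1(h)^{-1}[\phi(\exists_i a)]=\mathcal{F}_1(h)^{-1}\bigl[S_i[\phi(a)]\bigr],
\end{align*}
and since every member of $\mathcal{COREG}(\mathcal{F}_0(A))$ has the form $\phi(a)$, this is exactly condition 3. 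Condition 4 follows the same way from $h(\delta_{i,k})=\delta'_{i,k}$ via $\Delta_{i,k}=\phi(\delta_{i,k})$ and $\Delta'_{i,k}=\phi'(\delta'_{i,k})$. Functoriality of $\mathcal{F}_F$ is again immediate from the behaviour of preimages under identities and composition, together with Lemma \ref{functor from ba to uv}(1).

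I do not expect a genuine obstacle here: every step either quotes the Bezhanishvili--Holliday Boolean duality or is a short diagram chase of the kind already carried out in Section 6. The one point that calls for care is the bookkeeping of well-definedness --- confirming that $S_i[f^{-1}[U]]$ and $S_i[\phi(a)]$ actually lie in the relevant compact open regular open algebra before asserting that they are preserved --- which is precisely the role of condition 7 of Definition \ref{cylindric uv space}, and of the corresponding facts inside $\mathcal{F}_0(A)$ recorded in Lemma \ref{functor from ba to uv}(1).
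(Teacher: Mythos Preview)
Your proposal is correct and follows essentially the same approach as the paper: the paper's proof simply cites \cite[Theorem 6.7]{bezhanishvili} for the Boolean reducts and then says the remainder ``follows similarly to'' Lemmas \ref{continuous to homomorphism} and \ref{functors}, which is exactly the reduction you carry out, only with the parallel calculations written out in full rather than left implicit.
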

\begin{proof}
By \cite[Theorem 6.7]{bezhanishvili}, it follows that if $h\colon A\to A'$ is a homomorphism of Boolean algebras, then $\mathcal{F}_1(h)=h^{-1}\colon\mathcal{F}_0(A')\to\mathcal{F}_0(A)$ is a UV-map and that if $f\colon X\to X'$ is a UV-map between UV-spaces, then $\mathcal{G}_1(f)=f^{-1}\colon\mathcal{G}_0(X')\to\mathcal{G}_0(X)$ is a Boolean algebra homomorphism. The remainder of the part 1 follows similarly to Theorem \ref{continuous to homomorphism} and part 2 follows similarly to Theorem \ref{functors}. 
\end{proof}

\begin{theorem}\label{duality for ba}
$\mathcal{G}_F=\langle\mathcal{G}_0,\mathcal{G}_1\rangle\colon\mathcal{CUV}\to\mathcal{CBA}$ and $\mathcal{F}_F=\langle\mathcal{F}_0,\mathcal{F}_1\rangle\colon\mathcal{CBA}\to\mathcal{CUV}$ constitute a dual equivalence between the categories $\mathcal{CBA}$ and $\mathcal{CUV}$.  
\end{theorem}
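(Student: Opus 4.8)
The plan is to follow the same blueprint used for the dual equivalence between $\mathcal{COL}$ and $\mathcal{CUVO}$, with the biorthogonal-closure machinery replaced throughout by the regular-open machinery of Bezhanishvili and Holliday. Two inputs are already available. Theorem \ref{representation of ba} supplies, for every $I$-dimensional cylindric Boolean algebra $A$, an isomorphism $\phi_A\colon A\to\mathcal{G}_0(\mathcal{F}_0(A))$ and, for every $I$-dimensional cylindric UV-space $X$, a relational homeomorphism $\psi_X\colon X\to\mathcal{F}_0(\mathcal{G}_0(X))$. Theorem \ref{ba homo to uv map} supplies the two contravariant functors $\mathcal{G}_F=\langle\mathcal{G}_0,\mathcal{G}_1\rangle\colon\mathcal{CUV}\to\mathcal{CBA}$ and $\mathcal{F}_F=\langle\mathcal{F}_0,\mathcal{F}_1\rangle\colon\mathcal{CBA}\to\mathcal{CUV}$. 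What remains is to promote the pointwise families $(\phi_A)_A$ and $(\psi_X)_X$ to natural isomorphisms $\phi\colon\text{id}_{\mathcal{CBA}}\to\mathcal{G}_F\circ\mathcal{F}_F$ and $\psi\colon\text{id}_{\mathcal{CUV}}\to\mathcal{F}_F\circ\mathcal{G}_F$, exactly as was done for $\eta$ and $\theta$ in the ortholattice case.

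First I would check naturality of $\phi$. Given a cylindric Boolean algebra homomorphism $h\colon A\to A'$, one must show $\mathcal{G}_1(\mathcal{F}_1(h))\circ\phi_A=\phi_{A'}\circ h$, where $\mathcal{G}_1(\mathcal{F}_1(h))=(h^{-1})^{-1}$ because $\mathcal{F}_1(h)=h^{-1}$ and $\mathcal{G}_1(f)=f^{-1}$. Since every map in sight acts pointwise on filters and elements, this is a direct chase: for $a\in A$ and a proper filter $x$ of $A'$ one has $x\in\phi_{A'}(h(a))$ iff $h(a)\in x$ iff $a\in h^{-1}[x]$ iff $h^{-1}[x]\in\phi_A(a)$ iff $x\in(h^{-1})^{-1}[\phi_A(a)]$. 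Dually, for a cylindric UV-map $f\colon X\to X'$ one verifies $\mathcal{F}_1(\mathcal{G}_1(f))\circ\psi_X=\psi_{X'}\circ f$ by the same kind of chase, using $\psi_X(x)=\{U\in\mathcal{COREG}(X):x\in U\}$. These are precisely the naturality squares drawn (for the ortholattice setting) in the proof of the preceding duality theorem, and the arguments transfer verbatim.

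Finally, since Theorem \ref{representation of ba} already guarantees that each component $\phi_A$ is an isomorphism in $\mathcal{CBA}$ and each component $\psi_X$ is an isomorphism (a relational homeomorphism) in $\mathcal{CUV}$, the natural transformations $\phi$ and $\psi$ are natural isomorphisms, and hence $\langle\mathcal{G}_F,\mathcal{F}_F\rangle$ is a dual equivalence between $\mathcal{CBA}$ and $\mathcal{CUV}$.

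The only place where care is genuinely required is confirming that $\mathcal{F}_1(h)$ and $\mathcal{G}_1(f)$ actually land in the correct categories — i.e. that preimages under cylindric UV-maps are cylindric Boolean homomorphisms and preimages under cylindric Boolean homomorphisms are cylindric UV-maps, respecting $(\exists_{S_i})_{i\in I}$ and $(\Delta_{i,k})_{i,k\in I}$ — but this is exactly the content of Theorem \ref{ba homo to uv map}, which is already established; likewise the well-definedness of $\phi_A$ as a map into $\mathcal{G}_0(\mathcal{F}_0(A))$ rather than merely into the powerset is secured by Theorem \ref{representation of ba}. So the theorem reduces to two routine naturality diagram chases together with an appeal to the representation theorem, and I do not anticipate a serious obstacle.
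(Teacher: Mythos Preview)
Your proposal is correct and is exactly the argument the paper intends: the paper in fact states this theorem without proof, relying on the reader to transplant verbatim the proof of the $\mathcal{COL}$--$\mathcal{CUVO}$ duality (commuting squares from Theorems \ref{representation of ba} and \ref{ba homo to uv map}, components $\phi_A$ and $\psi_X$ being isomorphisms, hence natural isomorphisms). Your diagram chases for naturality are the routine verifications the paper omits, so there is nothing to add.
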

It has already been pointed out that ortholattices generalize Boolean algebras in the sense that an ortholattice $A$ is a Boolean algebra if and only if $A$ is distributive. The following observation made by Bezhanishvili and Holliday in \cite{bezhanishvili} therefore connects the duality presented in this section for cylindric Boolean algebras with the duality obtained in Section 6 for cylindric ortholattices. 

\begin{proposition}[\protect{\cite[pg.~146]{bezhanishvili}}]\label{reg is biortho}
    If $A$ is a distributive ortholattice (i.e., a Boolean algebra), then $\mathcal{B}(\mathfrak{F}(A))=\mathcal{REG}(\mathfrak{F}(A))$ where $\mathcal{REG}(\mathfrak{F}(A))$ is defined with respect to filter inclusion. 
\end{proposition}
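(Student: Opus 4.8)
The plan is to reduce the claimed equality of families of subsets to a single pointwise identity between the orthocomplement $(\cdot)^{\perp}$ and the pseudocomplement $(\cdot)^{*}$ on upsets of $\langle\mathfrak{F}(A);\subseteq\rangle$. Two structural observations hold in any $\mathfrak{F}(A)$, independently of distributivity. First, if $x\subseteq x'$ and $x\perp y$ then $x'\perp y$, since the orthogonality is witnessed by the same $a\in x\subseteq x'$; hence $U^{\perp}$ is an upset for every $U\subseteq\mathfrak{F}(A)$, and in particular every member of $\mathcal{B}(\mathfrak{F}(A))$ is an upset. Second, $U^{*}=\mathrm{Int}_{\leqslant}(\mathfrak{F}(A)\setminus U)$ is open in the upset topology, so $U^{**}$ is an upset and every member of $\mathcal{REG}(\mathfrak{F}(A))$ is an upset. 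Thus both families lie inside the lattice of upsets, and it suffices to compare $(\cdot)^{\perp\perp}$ and $(\cdot)^{**}$ there.

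The key lemma is that $U^{\perp}=U^{*}$ for every upset $U\subseteq\mathfrak{F}(A)$. Its engine is the Boolean fact that, for proper filters $x,y$ of $A$, one has $x\perp y$ if and only if the filter generated by $x\cup y$ is improper. The forward direction holds in any ortholattice: if $a\in x$ and $a^{\perp}\in y$ then $0=a\wedge a^{\perp}$ lies in the generated filter. The converse is where Booleanness is essential: if $a\wedge b=0$ with $a\in x$ and $b\in y$, then $b=(b\wedge a)\vee(b\wedge a^{\perp})=b\wedge a^{\perp}\leq a^{\perp}$ by distributivity, so $a^{\perp}\in y$ as $y$ is an upset, giving $x\perp y$. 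Granting this: if $x\in U^{*}$ and $y\in U$, then $x\not\perp y$ would make the filter $z$ generated by $x\cup y$ proper; since $z$ extends both $x$ and $y$ and $U$ is an upset with $y\in U$, we get $z\in U$, contradicting $x\in U^{*}$ together with $x\subseteq z$. Conversely, if $x\in U^{\perp}$ and $x\subseteq x'\in U$, then $x\perp x'$ yields some $a\in x\subseteq x'$ with $a^{\perp}\in x'$, whence $0\in x'$, contradicting properness of $x'$; so $x\in U^{*}$.

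Finally I would iterate. Since $U^{\perp}$ is always an upset, applying the lemma twice gives $U^{\perp\perp}=(U^{\perp})^{\perp}=(U^{\perp})^{*}=(U^{*})^{*}=U^{**}$ for every upset $U$. Hence if $U\in\mathcal{B}(\mathfrak{F}(A))$ then $U=U^{\perp\perp}$ is an upset, so $U=U^{\perp\perp}=U^{**}$ and $U\in\mathcal{REG}(\mathfrak{F}(A))$; symmetrically, if $U\in\mathcal{REG}(\mathfrak{F}(A))$ then $U=U^{**}$ is an upset, so $U=U^{**}=U^{\perp\perp}$ and $U\in\mathcal{B}(\mathfrak{F}(A))$. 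This yields $\mathcal{B}(\mathfrak{F}(A))=\mathcal{REG}(\mathfrak{F}(A))$.

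The main obstacle is the converse half of the incompatibility characterization of $\perp$: it is exactly the step that fails for non-distributive ortholattices, so the proposition is genuinely special to Boolean algebras, and it is cleanest to isolate that step rather than bury it inside the verification of $U^{\perp}=U^{*}$.
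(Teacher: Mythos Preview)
Your proof is correct. The paper does not supply its own proof of this proposition; it simply records the observation and cites Bezhanishvili and Holliday \cite[pg.~146]{bezhanishvili}. Your argument---reducing the equality to the pointwise identity $U^{\perp}=U^{*}$ on upsets, and isolating the characterization ``$x\perp y$ iff the filter generated by $x\cup y$ is improper'' as the one place where distributivity is genuinely used---is the natural direct route and is in the spirit of the cited source. The only thing to add is that your second inclusion $U^{\perp}\subseteq U^{*}$ indeed requires no distributivity, so the asymmetry you flag is real: the failure of $\mathcal{B}(\mathfrak{F}(A))\subseteq\mathcal{REG}(\mathfrak{F}(A))$ in the non-Boolean case traces precisely to the failure of the converse incompatibility characterization.
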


Indeed, by Theorem \ref{topological representation} every cylindric ortholattice $A$ is isomorphic to $\mathcal{COB}(X)$ where $X$ is the cylindric UVO-space induced by the spectrum $\mathfrak{F}(A)$ of proper filters of $A$, and by Theorem \ref{representation of ba}, every cylindric Boolean algebra is isomorphic to $\mathcal{COREG}(X)$ where $X$ is the UV-space induced to the spectrum $\mathfrak{F}(A)$ of proper filters of $A$. Moreover, by Theorem \ref{algebraic realization}, every UVO-space $X$ is relationally homeomorphic to $\mathfrak{F}(A)$ where $A$ is the cylindric ortholattice $\mathcal{COB}(X)$ of compact open biorthogonally closed subsets of $X$, and every cylindric UV-space $X$ is relationally homeomorphic to $\mathfrak{F}(A)$ where $A$ is the cylindric Boolean algebra $\mathcal{COREG}(X)$ of compact open regular open subsets of $X$. Therefore, Proposition \ref{reg is biortho} describes how the duality obtained for cylindric Boolean algebras is related to the duality obtained for cylindric ortholattices.

\section{Conclusion and future work}

 We have extended the canonical completion results obtained in \cite{harding3} to the setting of cylindric ortholattices. We then extended the duality results obtained in \cite{mcdonald} and \cite{harding3} to a choice-free duality for cylindric ortholattices. Then, by building on the results obtained in \cite{holliday, bezhanishvili}, we provided a topological construction of the canonical completion of cylindric Boolean algebras as well as a duality result for cylindric Boolean algebras. 

 Related lines of research may include investigating the MacNeille completion of cylindric ortholattices. This will involve extending the results concerning the MacNeille completion of a monadic ortholattice obtained in \cite{harding3}.

\end{document}